\def\A{\mathbb A}
\def\C{\mathbb C}
\def\F{\mathbb F}
\def\k{\mathbb k}
\def\P{\mathbb P}
\def\bP{\mathbb P}
\def\R{\mathbb R}
\def\Q{\mathbb Q}
\def\N{\mathbb N}
\def\Z{\mathbb Z}
\def\cM{\mathcal M}
\def\chr{{\operatorname{char}}}
\def\Hom{{\operatorname{Hom}}}
\def\Image{{\operatorname{Im}}}
\def\Spec{{\operatorname{Spec}}}
\def\Pic{{\operatorname{Pic}}}
\def\NS{{\operatorname{NS}}}
\def\id{{\operatorname{id}}}
\def\RCdim{{\operatorname{RC-dim}}}
\def\codim{{\operatorname{codim}}}
\def\Aut{{\operatorname{Aut}}}
\def\rk{{\operatorname{rk}}}
\def\chr{{\operatorname{char}}}
\def\Bir{{\operatorname{Bir}}}
\def\Cr{{\operatorname{Cr}}}
\def\Burn{{\operatorname{Burn}}}
\def\BURN{{\mathcal{B}urn}}
\def\ExDiv{{\operatorname{ExDiv}}}
\def\wt{\widetilde}
\def\ol{\overline}
\def\Id{{\mathrm{id}}}
\def\MRC{{\mathrm{MRC}}}
\def\ab{{\mathrm{ab}}}
\theoremstyle{plain}
\newtheorem{dummy}{dummy}[section]
\newtheorem{theorem}[dummy]{Theorem}
\newtheorem{thm}[dummy]{Theorem}
\newtheorem{proposition}[dummy]{Proposition}
\newtheorem{prop}[dummy]{Proposition}
\newtheorem{lemma}[dummy]{Lemma}
\newtheorem{lem}[dummy]{Lemma}
\newtheorem{corollary}[dummy]{Corollary}
\newtheorem{cor}[dummy]{Corollary}
\newtheorem{example}[dummy]{Example}
\newtheorem{definition}[dummy]{Definition}
\newtheorem{conjecture}[dummy]{Conjecture}
\numberwithin{equation}{section}
\theoremstyle{definition}
\newtheorem{Def}[dummy]{Definition}
\newtheorem{remark}[dummy]{Remark}
\newcommand{\ssec}{\subsection}
\newcommand{\ti}[1]{\tilde{#1}}
\newcommand{\ul}{\underline}
\newcommand{\vast}{\bBigg@{4}}
\newcommand{\Vast}{\bBigg@{5}}
\newcommand{\cB}{\mathcal{B}}
\newcommand{\cD}{\mathcal{D}}
\newcommand{\cE}{\mathcal{E}}
\newcommand{\cH}{\mathcal{H}}
\newcommand{\cL}{\mathcal{L}}
\newcommand{\cO}{\mathcal{O}}
\newcommand{\cX}{\mathcal{X}}
\newcommand{\cZ}{\mathcal{Z}}
\newcommand{\cProj}{\mathcal{P}roj \ }
\newcommand{\gS}{\Sigma}
\newcommand{\gb}{\beta}
\newcommand{\gd}{\delta}
\newcommand{\gk}{\kappa}
\newcommand{\go}{\omega}
\newcommand{\gs}{\sigma}
\newcommand{\hor}{\mathrm{hor}}
\newcommand{\Ima}{\mathrm{Im}}
\newcommand{\Mor}{\mathrm{Mor}}
\newcommand{\redu}{\mathrm{red}}
\newcommand{\trdeg}{\mathrm{trdeg}}
\newcommand{\ver}{\mathrm{ver}}
\newcommand{\bss}{\setminus}
\newcommand{\cnec}{\mathrel{:=}}
\newcommand{\dto}{\dashrightarrow}
\newcommand{\xto}[1]{\xrightarrow{ #1 }}
\title{Unboundedness
for motivic invariants 
of birational  automorphisms}
\author{Hsueh-Yung Lin, Evgeny Shinder}
	\address{Department of Mathematics, National Taiwan University, 
	and National Center for Theoretical Sciences,
	Taipei, Taiwan.}
	\email{hsuehyunglin@ntu.edu.tw}
\address{School of Mathematical and Physical Sciences, University of Sheffield,
Hounsfield Road, S3 7RH, UK}
\email{eugene.shinder@gmail.com}
\begin{document}

\raggedbottom

\maketitle

\begin{abstract}
We introduce horizontal and vertical motivic invariants of
birational maps between rational dominant maps and study their basic properties.
As a first application, we show that the (usual) motivic invariants vanish for
birational automorphisms of threefolds over algebraically closed fields of characteristic zero.
On the other hand, we prove that the motivic invariants of the birational automorphism groups of many types of
varieties, including projective spaces of dimension at least four
over a field of characteristic zero, do not form a bounded family, even after extending scalars to the algebraic closure of the field.
For such varieties, we further show that their birational automorphism groups are not generated by maps preserving a conic bundle or a rational surface fibration structure, 
and their abelianizations do not stabilize.
\end{abstract}

\setcounter{tocdepth}{1}
\tableofcontents

\section{Introduction}

We work with motivic invariants of birational maps between algebraic varieties introduced in \cite{BirMot, LSZ20}.
Recall that if $\phi\colon X_1 \dto X_2$ is a birational map between algebraic varieties over a field $\k$, then the motivic invariant of $\phi$ is defined as
\begin{equation}\label{eqn:c-intro}
    c(\phi)
\cnec 
\sum_{E \in \ExDiv(\phi^{-1})}
 [E]
 \ \ -  
 \sum_{D \in \ExDiv(\phi)} [D] \ \ \in \ \ \Burn_*(\k).
\end{equation}
Here, $\ExDiv(\phi)$ is the set of exceptional divisors of $\phi$ and $\Burn_*(\k)$ is the Burnside group \cite{KontsevichTschinkel}, that is the free abelian group generated by the birational isomorphism classes of $\k$-varieties.
This invariant has been generalized to birational maps between orbifolds in 
\cite{KT-orbifolds-c}
and to volume preserving maps
in \cite{CLKT, LoginovZhang}.
In this paper, we generalize the motivic invariant $c(\phi)$ to the relative setting and use it to prove new results about the groups of birational automorphisms.

\subsection{Horizontal and vertical invariants}

Given rational dominant maps $\pi_1\colon X_1 \dto B_1$ and
$\pi_2\colon X_2 \dto B_2$ of algebraic varieties over a field $\k$,
we consider a commutative diagram
\begin{equation}\label{eqn-introsq}
    \xymatrix{
    X_1 \ar@{-->}[d]_{\pi_1} \ar@{-->}[r]^\phi & X_2 \ar@{-->}[d]^{\pi_2}\\
    B_1 \ar@{-->}[r]^\sigma
    & B_2 \\
    }
\end{equation}
with $\phi$ and $\sigma$ birational. 
We regard $(\phi, \sigma)$ (or just $\phi$ since $\sigma$ is uniquely determined by $\phi$) 
as a birational map between $\pi_1$ and $\pi_2$. We define the
horizontal invariant $c_\hor(\phi)$ 
(resp. the vertical invariant $c_\ver(\phi)$) by restricting in \eqref{eqn:c-intro} to divisors
which dominate (resp. do not dominate) the respective bases $B_i$; see Definition~\ref{def-chor1}. 
This generalizes the invariants introduced in~\cite{BirMot, LSZ20}, where $B_1 = B_2 = \Spec(\k)$ and $\sigma = \id$,
in which case $c(\phi) = c_{\hor}(\phi)$. 
In general, the absolute invariant $c(\phi)$ decomposes as the sum
\[
c(\phi) = c_\hor(\phi) + c_\ver(\phi).
\]

We study the properties of these invariants in full generality, without making any assumptions on the singularities of the varieties or on the base field $\k$.
We prove three important vanishing results in the case $\pi_1 = \pi_2$. 
The first one, which we  call Vanishing I (Proposition \ref{prop:vanishingI}), is the vanishing of the horizontal invariants when the relative dimension of $\pi_i$ is at most two. The Vanishing II (Corollary \ref{cor:vanishingII})
and Vanishing III (Corollary \ref{cor:vanishingIII}) 
are for vertical invariants in certain situations.

Separating motivic invariants into horizontal and vertical parts allows for an inductive approach to their computation. For example, when $\chr(\k) = 0$, we can consider the maximal rationally connected (MRC) fibration $X \overset{\pi}\dto B$. 
By the uniqueness of the MRC fibration, every $\phi \in \Bir(X)$ induces a birational self-map of $\pi$.
Using the MRC fibration, horizontal and vertical invariants, and our previous results \cite{BirMot, LSZ20}, we prove the following:

\begin{theorem}[= Theorem \ref{thm-van3fold}] \label{thm:main-intro}
Let $X$ be a $3$-dimensional variety over an algebraically closed field $\k$ of characteristic zero.
Then $c(\Bir(X)) = 0$.
\end{theorem}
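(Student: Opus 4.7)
The plan is to analyse $c(\phi)$ for an arbitrary $\phi \in \Bir(X)$ through the maximal rationally connected fibration $\pi \colon X \dto B$. Since the MRC fibration is canonical in characteristic zero, $\phi$ fits into a commutative square as in \eqref{eqn-introsq} over some $\sigma \in \Bir(B)$, and the decomposition $c(\phi) = c_\hor(\phi) + c_\ver(\phi)$ applies. The argument proceeds by cases on $b \cnec \dim B \in \{0,1,2,3\}$.

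When $b = 3$, the variety $X$ is non-uniruled, and three-dimensional MMP realises any $\phi \in \Bir(X)$ through a sequence of flops on a terminal minimal model, hence as a pseudo-isomorphism; this forces $c(\phi) = 0$. When $b = 0$, the variety $X$ is rationally connected and $B = \Spec(\k)$, so every divisor dominates the base, $c_\ver(\phi) = 0$ automatically, and the absolute invariant $c(\phi) = c_\hor(\phi)$ is handled by the previous results of \cite{BirMot, LSZ20} on rationally connected threefolds over algebraically closed fields of characteristic zero. The substantive intermediate cases are $b \in \{1, 2\}$, in which $X$ is birational to a rational surface fibration over a curve or a conic bundle over a surface. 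In both cases the relative dimension of $\pi$ is at most $2$, so Vanishing~I (Proposition~\ref{prop:vanishingI}) kills $c_\hor(\phi)$. It remains to handle $c_\ver(\phi)$, and this is precisely the setting covered by Vanishings~II and~III (Corollaries~\ref{cor:vanishingII} and~\ref{cor:vanishingIII}), tailored to annihilate the vertical contribution when the generic fibre is a rationally connected variety of small dimension.

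I expect the main obstacle to be the rationally connected case ($b = 0$), which lies outside the new relative machinery and must be imported wholesale from the earlier absolute results of \cite{BirMot, LSZ20}. A secondary technical point will be verifying the hypotheses of Vanishings~II and~III for our specific $\phi$ when $b \in \{1, 2\}$; this should reduce to analysing the restriction of $\phi$ to the generic fibre of $\pi$ as a birational self-map of a rationally connected variety of dimension $1$ or $2$ over $\k(B)$, where the corresponding lower-dimensional vanishings of $c$ are already in hand. Assembling the four cases then yields $c(\phi) = 0$ for all $\phi \in \Bir(X)$.
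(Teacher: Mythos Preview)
Your overall architecture matches the paper's proof exactly: case split on $\dim B$ via the MRC fibration, with $b=0$ reduced to \cite[Proposition 2.6]{BirMot}, $b=3$ handled by MMP (no exceptional divisors on a terminal minimal model), and $b\in\{1,2\}$ split into $c_\hor(\phi)=0$ via Vanishing~I plus a vertical vanishing. So the plan is sound.

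However, your description of how to kill $c_\ver(\phi)$ for $b\in\{1,2\}$ is confused, and this is the only place where genuine work happens. Vanishing~II (Corollary~\ref{cor:vanishingII}) applies only to $\phi\in\Bir(X/B)$, i.e.\ to maps descending to $\sigma=\id_B$; a general $\phi\in\Bir(\pi)$ does not satisfy this, so Vanishing~II is irrelevant here. The correct tool is Vanishing~III alone, and its hypotheses are \emph{not} about the restriction of $\phi$ to the generic fibre as you suggest. They are: (i) $c(\Bir(B))=0$, which holds since $\dim B\le 2$ over an algebraically closed field; and (ii) $\pi$ is \emph{birationally trivial in codimension~$1$} (Definition~\ref{def:bir-trivial-c1}), a condition on the fibration itself, not on $\phi$. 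Verifying~(ii) is the substantive step: for $b=1$ the general fibre is a rational surface over $\ol{\k}$, so Example~\ref{ex:bir-triv-curve} applies; for $b=2$ the generic fibre is a conic (a Severi--Brauer curve over $\k(B)$), and one needs Tsen's theorem as in Example~\ref{ex:bir-triv-surf} to see that the conic bundle trivialises over every curve in $B$. Your proposed reduction to ``lower-dimensional vanishings of $c$ on the generic fibre'' does not address this and would, if anything, recompute $c_\hor$ rather than $c_\ver$.
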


Let us explain the context of this theorem.
The corresponding result for surfaces over 
an algebraically closed field is trivial,
and over an arbitrary perfect field $\k$ the vanishing result for surfaces 
was deduced from the Minimal Model Program and Sarkisov link factorization in \cite{LSZ20}, where the study of such questions was initiated.
In dimension $3$, the corresponding result is \emph{false} \cite[\S 3.3]{BirMot} over many nonclosed fields (such as $\k = \Q$), 
but it is known to be true when $\k$ is algebraically closed of characteristic zero 
and $X$ is rationally connected \cite[Proposition 2.6]{BirMot}. 
In dimension at least $4$, the result is also \emph{false}, even over $\k = \C$ \cite[\S 3.4]{BirMot}. 

In all cases where the corresponding result is false, there are nontrivial implications for the group of birational automorphisms
$\Bir(X)$ \cite[\S 4]{BirMot}.
On the other hand, when the result is true, one obtains control over a truncated
Grothendieck group of varieties 
\cite[\S 3.2]{LSZ20},
\cite[\S 2.3]{BirMot}.
Thus Theorem~\ref{thm:main-intro} closes an important borderline case for this vanishing question. 
The new nontrivial cases that we check are when $X$ is a conic bundle over a non-ruled surface and when $X$ is a rational surface fibration over a positive genus curve. The proofs rely on Vanishings I, II and III. 

\subsection{Applications}

Let us now explain  some applications of horizontal and vertical invariants to the nonvanishing of motivic invariants.
The exceptional divisors of the Hassett--Lai Cremona transformations 
$\phi \in \Bir(\P^4_\k)$~\cite{HassettLai}, which were used in \cite{BirMot} when $\chr(\k) = 0$
to prove that $c(\Bir(\P^4_\k)) \ne 0$, form a
bounded family, since the corresponding blowup centers are models of K3 surfaces of degree $12$.
It is therefore natural to ask whether
the image $c(\Bir(\P^4_\k))$ is generated by a bounded family (see Definition~\ref{def-unbounded}). Explicitly, this would mean that there are only finitely many types of centers one can blow up that contribute to the nonvanishing of $c(\phi)$, while most types of centers cancel out.

We show that this is not the case: $c(\Bir(\P^n_\k))$ for $n \ge 4$ is unbounded in a very strong sense. 
To formulate the result we need to introduce a filtered group homomorphism
$\Burn_*(\k) \to \Burn_*(\k)$ which sends
a class $[X]$ to the class $[B]$ of the base of the MRC fibration $X \overset{\pi}\dto B$ (see~\S\ref{ss:MRC}).

\begin{thm}[see Theorem~\ref{thm-unbounded-c-B}]
\label{thm-unbounded} 
Let $\k$ be a field of characteristic zero.
Assume that $X$ is an $n$-dimensional variety birational to $B \times \P^3$
for some geometrically integral variety $B$ of positive dimension (for example $X = \P^n$ with $n \ge 4$).
Then the image
$$\Ima\left(\Bir(X) \overset{c}{\longrightarrow} \Burn_{n-1}(\k) \overset{\MRC}{\longrightarrow} \Burn_{\le n-1}(\k)\right)$$
contains
a geometrically unbounded subgroup of $\Burn_{n-2}(\k)$.
\end{thm}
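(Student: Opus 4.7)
The strategy is to exploit the generic fibre $\P^3_K$ of the projection $X \sim B \times \P^3 \to B$, with $K := \k(B)$. Since $\dim B \ge 1$, the field $K$ is not algebraically closed, so Cremona transformations $\psi \in \Cr_3(K)$ can carry nontrivial $c$-invariants in $\Burn_*(K)$, as demonstrated in \cite[\S 3.3]{BirMot} for $3$-folds over non-closed fields, in sharp contrast with Theorem~\ref{thm:main-intro}. We transfer these nontrivial contributions to $\Burn_*(\k)$ by relativisation over $B$ and then project by $\MRC$.

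\smallskip

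\noindent\emph{Step 1 (Reduction and relativisation).}
After assuming $X = B \times \P^3$, extending a birational self-map of the generic fibre to all of $X$ over $B$ yields an inclusion $\Cr_3(K) = \Bir_K(\P^3_K) \hookrightarrow \Bir(X)$. This inclusion is compatible with the relativisation homomorphism
\[
\Burn_d(K) \longrightarrow \Burn_{d+\dim B}(\k),\qquad [Y/K] \mapsto [\wt Y/\k],
\]
where $\wt Y$ is the $\k$-model of a $K$-variety $Y$, i.e.\ the $\k$-variety admitting a natural rational map to $B$ with generic fibre $Y$. Under these maps, $c(\psi) \in \Burn_2(K)$ transfers to $c$ of the corresponding element of $\Bir(X)$ in $\Burn_{n-1}(\k)$.

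\smallskip

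\noindent\emph{Step 2 (Sarkisov links producing nontrivial $c$).}
Adapting the mechanism of \cite[\S 3.3]{BirMot}, for each smooth curve $C \subset \P^3_K$ of positive genus appearing as the centre of an appropriate Sarkisov link (e.g.\ as a Galois-twisted complete intersection over $K$), we obtain $\psi_C \in \Cr_3(K)$ whose $c$-invariant contains the class of a conic bundle over $C$. The transcendental structure of $K$ supplies many such $C$: for each $g \ge 1$ and each rational map $f \colon B \dto M_g$ into the moduli space of smooth genus-$g$ curves, there is a distinct $K$-curve $C_f$ arising as the generic fibre of the pulled-back universal curve.

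\smallskip

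\noindent\emph{Step 3 (MRC image and unboundedness).}
The relativisation of the conic bundle over $C_f$ is an $(n-1)$-dimensional $\P^1$-fibration over $\wt{C_f} \to B$, where $\wt{C_f}$ is an $(n-2)$-dimensional $\k$-variety whose generic fibre has positive genus and is therefore not uniruled. Consequently, $\MRC$ sends this term to $[\wt{C_f}] \in \Burn_{n-2}(\k)$. As $(g, f)$ vary, the varieties $\wt{C_f}$ form a geometrically unbounded family: for fixed $g$, $\mathrm{Rat}(B, M_g)$ already contains components of unbounded dimension, indexed e.g.\ by the degree of $f$, and this unboundedness persists after base change to $\ol\k$. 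Hence the subgroup generated by the $[\wt{C_f}]$ inside $\Burn_{n-2}(\k)$ is geometrically unbounded and, by construction, lies in the image of $\MRC \circ c$.

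\smallskip

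\noindent\emph{Main obstacle.}
The crucial technical step is Step 2: explicitly producing a family of Cremonas $\psi_{C_f}$ such that the contribution of the conic bundle over $C_f$ in $c(\psi_{C_f})$ survives after accounting for all exceptional divisors of both $\psi_{C_f}$ and its inverse. Adapting \cite[\S 3.3]{BirMot} to the relative setting over $K$ requires carefully constructing Sarkisov links in families parametrised by $f$, while simultaneously controlling the full set of exceptional divisors. A parallel difficulty, addressed by an asymptotic degree argument, is checking that the pushed-forward classes $[\wt{C_f}]$ cannot all be absorbed into the $\Z$-span of any bounded family of $\k$-varieties, so that the image itself — not merely the set of contributing divisors — is genuinely unbounded.
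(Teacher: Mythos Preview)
Your overall architecture --- pass to $K = \k(B)$, build Cremona transformations of $\P^3_K$ with nontrivial $c$, spread out over $B$, apply $\MRC$ --- matches the paper's, and Step~1 is correct (the paper uses $c_\ver(\phi') = 0$ from Corollary~\ref{cor:vanishingII} to justify that $c$ of the spread-out map equals the relativisation of $c$ of the generic fibre map). However, the two places where real work is needed are not under control in your sketch.

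\textbf{Nonvanishing over $\ol\k$.} The invariant $c$ is a \emph{difference} of classes: in the paper's construction one gets $c(\phi') = [\P^1 \times Y] - [\P^1 \times Y']$ where $Y, Y' \to B$ compactify torsors under a common elliptic fibration $J_d \to B$. After $\MRC$ one must show $[Y_{\ol\k}] \ne [Y'_{\ol\k}]$, i.e.\ that $Y_{\ol\k}$ and $Y'_{\ol\k}$ are not birational as absolute $\ol\k$-varieties. A putative birational map between them need not lie over $B$; it could cover a nontrivial $\sigma \in \Bir(B_{\ol\k})$. The paper rules this out by a delicate rigidity construction (Lemma~\ref{lem:B-diagram}, Proposition~\ref{prop:constr-J}(4)) forcing $\Bir(J_{d,\ol\k}) = \Bir(J_{d,\ol\k}/B_{\ol\k})$, so $\sigma = \id$ and one is reduced to the generic fibres over $\ol\k(B)$, where nonisomorphism was arranged. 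Your Step~3 asserts only that each $\wt{C_f}$ is non-uniruled and says nothing about why the \emph{difference} of two such classes is nonzero over $\ol\k$. Separately, Step~2 invokes curves of arbitrary genus $g \ge 1$, but the mechanism of \cite[\S3.3]{BirMot} producing nonzero $c(\Bir(\P^3_K))$ is specific to genus~$1$ (pairs $C$, $\Pic^2(C)$ of torsors under an elliptic curve with full $p$-torsion); indeed, by \cite[Proposition~2.6]{BirMot} only centres with geometric components of genus $\le 1$ can contribute.

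\textbf{Unboundedness.} Your criterion --- components of $\mathrm{Rat}(B, M_g)$ of unbounded dimension --- does not give geometric unboundedness in the sense of Definition~\ref{def-unbounded}: a single flat family $\cD \to T$ can parametrise fibrations over $B$ whose classifying maps have arbitrarily large degree. What is needed is a numerical birational invariant that is bounded in every flat proper family. The paper introduces the \emph{Iitaka--Stein degree}, proves it is bounded in families of Kodaira-codimension-$1$ varieties (Proposition~\ref{pro-bdSdegell}), and arranges $J_d$ so that this degree is at least $d$; since $Y$, $Y'$ share the $j$-map of $J_d$, their Iitaka--Stein degrees go to infinity with $d$. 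Your ``asymptotic degree argument'' does not supply a comparable invariant, and without one the unboundedness claim is unsupported.
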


In particular we have $c(\Bir(X)) \ne 0$ which strengthens the nonvanishing result of \cite[Theorem 4.4(b)]{BirMot}.
Informally speaking Theorem \ref{thm-unbounded} says that the image of $c(\Bir(X))$ is unbounded and that it always contains elements of the maximal MRC base dimension $n-2$ corresponding to codimension $2$ blow up centers.
The unboundedness aspect is related to the question in what sense birational self-maps of a given variety can be bounded. This question is explored in detail for threefolds in~\cite{BCDP}. In particular, by \cite[Theorem 1.1]{BCDP} many classes of rationally connected threefolds admit a sequence of birational automorphisms blowing up curves of unbounded genus. By contrast, Theorem \ref{thm-unbounded} shows that a similar phenomenon occurs in higher dimension, even after canceling centers with birational MRC bases.

Our proof of Theorem \ref{thm-unbounded}
builds 
on the threefold nonvanishing examples in~\cite{BirMot}, where the centers are curves of genus $1$ defined over $\k(B)$. As one of the steps in the proof, we show in 
Corollary~\ref{cor:P3-kB} that motivic invariants are always nontrivial for $\Bir(\P^3_{\k(B)})$,
which  extends \cite[Theorem 1.2(1)]{BirMot} 
where the same result was proven when $\k$ is a number field, algebraically closed field or finite field.
To prove  Theorem \ref{thm-unbounded} we spread  out those curves of genus $1$ to elliptic fibrations  $Y \to B$ with Kodaria dimension $\kappa(Y) = \dim(B)$.
Controlling the resulting motivic invariant for elements of $\Bir(\P^3 \times B)$
requires a careful analysis of the induced elliptic fibrations.

Finally, under the same assumptions as in Theorem \ref{thm-unbounded},
we obtain the following two corollaries from the maximality of the MRC base dimension claim.

\begin{corollary}[see Corollary \ref{cor:abelianizations}]
For any $k \ge 3$, 
the canonical morphism between abelianizations
\[
\Bir(\P^k \times B)^\ab \to
\Bir(\P^{k+1} \times B)^\ab 
\]
is not surjective.
\end{corollary}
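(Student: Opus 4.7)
The plan is to exploit the composition $\MRC\circ c$, which takes values in the abelian group $\Burn_{\ast}(\k)$ and therefore factors through the abelianization of $\Bir(\cdot)$, together with Theorem~\ref{thm-unbounded}, in order to exhibit elements of $\Bir(\P^{k+1}\times B)^{\ab}$ not lying in the image of the stabilization map.

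The canonical homomorphism is given by $\phi\mapsto\phi\times\id_{\P^1}$ under the birational identification $\P^{k+1}\times B\sim(\P^k\times B)\times\P^1$. Taking a common resolution $Y\to\P^k\times B$ for $\phi$ and passing to the product $Y\times\P^1$, one checks that the exceptional divisors of $\phi\times\id_{\P^1}$ are precisely $\{D\times\P^1:D\in\ExDiv(\phi)\}$, and similarly for the inverse. Since the MRC fibration of $Z\times\P^1$ is induced from that of $Z$ (the rationally connected fibers become $F\times\P^1$ while the non-uniruled base is unchanged), we have $\MRC([Z\times\P^1])=\MRC([Z])$. Combining these facts yields $\MRC(c(\phi\times\id_{\P^1}))=\MRC(c(\phi))$ for every $\phi\in\Bir(\P^k\times B)$.

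The next step is to bound the degree of these stabilized images. In characteristic zero, any divisor contracted by a birational self-map of a smooth variety is uniruled, so its MRC base has strictly smaller dimension than the divisor itself. Applied to the exceptional divisors of any $\phi\in\Bir(\P^k\times B)$, the resulting MRC bases have dimension at most $k+\dim B-2$, and thus the image of the composition
\[
\Bir(\P^k\times B)\to\Bir(\P^{k+1}\times B)\xrightarrow{c}\Burn_{\ast}(\k)\xrightarrow{\MRC}\Burn_{\ast}(\k)
\]
is contained in $\Burn_{\le k+\dim B-2}(\k)$.

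Finally, applying Theorem~\ref{thm-unbounded} to $X=\P^{k+1}\times B$---which has dimension $n=k+1+\dim B$ and is birational to $(\P^{k-2}\times B)\times\P^3$, with base geometrically integral and of positive dimension since $k\ge 3$ and $\dim B\ge 1$---yields an unbounded, hence nonzero, subgroup of $\Burn_{n-2}(\k)=\Burn_{k-1+\dim B}(\k)$ inside the image of $\MRC\circ c\colon\Bir(\P^{k+1}\times B)\to\Burn_{\ast}(\k)$. This subgroup sits in pure degree $k-1+\dim B$, strictly above the cutoff $k-2+\dim B$ coming from stabilization, and therefore cannot be hit from $\Bir(\P^k\times B)$. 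Consequently the abelianization map fails to be surjective. The main technical step I anticipate is verifying cleanly, in the possibly singular setting, that the exceptional divisors of $\phi\times\id_{\P^1}$ are exactly the products $D\times\P^1$, together with establishing the uniruledness of exceptional divisors in characteristic zero.
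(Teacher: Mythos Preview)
Your proof is correct and follows essentially the same approach as the paper's. The paper packages the argument into a single commutative diagram (with $c_k$, the multiplication-by-$[\P^1]$ map, and $\MRC$), but the content is identical: $c(\phi\times\id_{\P^1})=[\P^1]\cdot c(\phi)$, the bound $\MRC(c(\Bir(\P^k\times B)))\subset\Burn_{\le k+\dim B-2}(\k)$ from Example~\ref{ex-MRCd-2}, and Theorem~\ref{thm-unbounded} applied to $\P^{k+1}\times B\sim\P^3\times(\P^{k-2}\times B)$ to produce classes in degree $k-1+\dim B$. One small remark: your parenthetical ``$\dim B\ge 1$'' is unnecessary for the application of Theorem~\ref{thm-unbounded}, since $k\ge 3$ already guarantees $\dim(\P^{k-2}\times B)\ge 1$; this is why the paper's Corollary~\ref{cor:abelianizations} is stated for any geometrically integral $B$, including points.
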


This strengthens
\cite[\S2]{Szymik}, where this result was proved for $\k = \C$ and $B = \Spec(\C)$, based on the homomorphisms constructed using Sarkisov link decomposition in \cite{BLZ} and \cite{BSY}.

\begin{cor}[see Corollary \ref{cor:gener-CB-SB}]
The group $\Bir(B \times \P^3)$ is not generated by pseudo-regularizable maps together with birational maps preserving a conic bundle or a rational surface fibration.    
\end{cor}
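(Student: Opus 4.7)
The plan is to combine Theorem~\ref{thm-unbounded} with Vanishing~I (Proposition~\ref{prop:vanishingI}) to track the image under $\MRC\circ c$ modulo classes of MRC base dimension at most $n-3$. Set $X = B\times\P^3$, $n=\dim X$, and let $\mathcal H \subset \Bir(X)$ be the subgroup generated by pseudo-regularizable maps and birational maps preserving a conic bundle or rational surface fibration on some birational model of $X$. By Theorem~\ref{thm-unbounded}, the image of $\Bir(X)$ under $\MRC\circ c$ contains a geometrically unbounded subgroup of $\Burn_{n-2}(\k)$; it suffices to show that the image of $\mathcal H$ lies in a geometrically bounded subgroup of $\Burn_{\le n-2}(\k)$ modulo $\Burn_{\le n-3}(\k)$, contradicting Theorem~\ref{thm-unbounded} and forcing $\mathcal H \ne \Bir(X)$.

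I would examine each generator type separately. A pseudo-regularizable $\psi$ contracts no divisors, so $c(\psi)=0$. If $\psi$ preserves a rational surface fibration $\pi\colon X\dto C$ of relative dimension $2$, Vanishing~I gives $c_{\hor}(\psi)=0$ and hence $c(\psi) = c_{\ver}(\psi)$; every vertical exceptional divisor has dim-$\ge 2$ fibre over a subvariety of $C$ of dimension at most $n-3$, so it is the full rational surface fibre of $\pi$ over $\pi(E)$, and its $\MRC$-class lies in $\Burn_{\le n-3}(\k)$. If $\psi$ preserves a conic bundle $\pi\colon X\dto S$ of relative dimension $1$, then again $c(\psi) = c_{\ver}(\psi)$ by Vanishing~I, and each vertical exceptional divisor $E$ is generically a $\P^1$-bundle over the divisor $D = \pi(E) \subset S$ (dimension forces this since the fibres of $\pi$ are $\P^1$), so $\MRC([E]) = \MRC([D])$.

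The main obstacle is the conic bundle case: one must show that the classes $\MRC([D])$ which arise as $\psi$ and the conic bundle $X\dto S$ vary generate only a geometrically bounded subgroup of $\Burn_{n-2}(\k)$ modulo $\Burn_{\le n-3}(\k)$. My plan is to exploit that every such $S$ is birationally dominated by $X = B\times\P^3$ and has MRC base birational to $B$ with generic fibre a rational surface, so any divisor $D\subset S$ whose class has MRC base of dimension $n-2$ must be horizontal over $B$ with positive-genus general fibre embedded into a rational surface; the resulting family of curve fibrations is controlled by the classification of divisor classes in rational surfaces over $B$, which forms a bounded family in the sense of Definition~\ref{def-unbounded}. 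By contrast, the elliptic fibrations $Y \to B$ with $\kappa(Y)=\dim B$ produced in Theorem~\ref{thm-unbounded} arise by spreading out elliptic curves of unbounded degree in $\P^3_{\k(B)}$, and so realize a geometrically unbounded family of fibrations not all of whose general fibres embed in a rational surface of bounded type over $B$. Combining the three analyses then yields $(\MRC\circ c)(\mathcal H) \subsetneq (\MRC\circ c)(\Bir(X))$, and hence $\mathcal H \ne \Bir(X)$.
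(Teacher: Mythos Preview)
Your overall architecture matches the paper's: use Theorem~\ref{thm-unbounded} to produce classes in $\Burn_{n-2}$ under $\MRC\circ c$, then show each generator type lands in $\Burn_{\le n-3}$. The pseudo-regularizable and rational-surface-fibration cases are fine (though for the former, the correct reason is additivity of $c$ together with $c(g)=0$ for a regular $g$, as in \cite[Lemma~4.3]{BirMot}; a pseudo-regularizable map can certainly contract divisors).

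The genuine gap is in the conic bundle case. You correctly compute that a vertical exceptional divisor $E$ satisfies $\MRC([E]) = \MRC([D])$ with $D=\pi(E)\subset S$ of dimension $n-2$, and then propose to prove that the resulting classes $\MRC([D])$ are geometrically bounded. This is both unnecessary and almost certainly false as stated: for a \emph{fixed} conic bundle $\pi\colon X\dto S$, the divisors $D$ that occur are exactly the exceptional divisors of arbitrary $\sigma\in\Bir(S)$, and there is no reason whatsoever for these to form a bounded family (nor does your sketch about ``divisor classes in rational surfaces over $B$'' supply one, since $S$ need not be $B\times\P^2$ and $\sigma$ need not respect any product structure).

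What you are missing is the cancellation encoded in Proposition~\ref{prop:RCfib} and Corollary~\ref{cor:ver-MRC}: after passing to a model of $\pi$ with irreducible rationally connected fibers, one has
\[
\MRC(c_\ver(\phi)) \;=\; \MRC(c(\sigma)),
\]
and since every exceptional divisor of $\sigma\in\Bir(S)$ is ruled (Example~\ref{ex-MRCd-2}), the right-hand side already lies in $\Burn_{\le \dim S-2}=\Burn_{\le n-3}$. So although an \emph{individual} $\MRC([D])$ can sit in $\Burn_{n-2}$, the signed sum $\MRC(c_\ver(\phi))$ never does. With this in hand no boundedness argument is needed: for all three generator types $\MRC(c(\psi))\in\Burn_{\le n-3}$, whereas Theorem~\ref{thm-unbounded} produces elements of $\MRC(c(\Bir(X)))$ with a nonzero component in $\Burn_{n-2}$.
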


This result is new already in the case when $B = \P^k$, $k > 0$, 
where it reproves and 
 strengthens the known results that $\Cr_n(\k)$ 
 with $n \ge 4$ is not generated
by linear automorphisms and de Jonqui\`eres maps~\cite[Theorem C]{BLZ}
(which also holds for $n = 3$),
or by pseudo-regularizable elements \cite[Theorem 1.2]{BirMot}, \cite[Theorem 1.2]{GLU} (which is currently unknown for $n = 3$).
Our approach to proving such results is entirely different from Blanc--Lamy--Zimmermann~\cite{BLZ} as we rely on motivic invariants while the proof of \cite{BLZ} is using Sarkisov link decomposition.

\medskip

\subsection{Conjectural description of the image of $c$}

We would like to finish the Introduction with  some speculations regarding the image $c(\Bir(X))$, which is currently unknown whenever it is nontrivial.
In the simplest nontrivial case $X = \P^4_\C$,
known elements that appear in $c(\Bir(\P^4_\C))$ are generated by the differences 
\begin{equation}\label{eq:P1-S-Sp}
[\P^1]([S] - [S']), \text{ where $S$ and $S'$ are D-equivalent K-nef surfaces. }
\end{equation}
Namely, $S$ and $S'$ can be D-equivalent K3 
surfaces \cite{BirMot} obtained from the Hassett--Lai map \cite{HassettLai}
or D-equivalent elliptic surfaces of Kodaira dimension $\kappa = 1$ as constructed in the proof of Theorem \ref{thm-unbounded} in this paper 
(the D-equivalence for such pairs of surfaces is a result of Bridgeland \cite{Bridgeland-elliptic}).
We propose the following:
\begin{conjecture}\label{conj:D-eq}
The image $c(\Bir(\P^4_\C))$ is generated
by elements of the form \eqref{eq:P1-S-Sp}.
\end{conjecture}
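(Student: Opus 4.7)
The plan is to analyze $c(\Bir(\P^4_\C))$ by combining a Sarkisov-style decomposition of $4$-fold birational self-maps with the inductive framework of horizontal and vertical invariants developed in this paper, and to match the outcome with D-equivalence of surface centers.

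First, I would reduce to elementary contributions. Decomposing $\phi \in \Bir(\P^4_\C)$ through a chain of birational modifications between terminal Mori fiber spaces, the additivity of $c$ reduces the computation to divisorial contractions and extractions. Divisorial steps on $4$-folds blow up smooth subvarieties of codimension $2$, $3$, or $4$, with exceptional divisors birational to $\P^1 \times S$, $\P^2 \times C$, or $\P^3$ respectively. By Theorem~\ref{thm:main-intro}, only the birational class of each such $3$-fold enters $c(\phi)$, so every contribution is a class of the form $\pm [\P^3]$, $\pm [\P^2 \times C]$, or $\pm [\P^1 \times S]$ in $\Burn_3(\C)$.

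Next, I would argue that the $[\P^3]$ and $[\P^2 \times C]$ contributions cancel inside $c(\phi)$ by tracking them through the vertical invariant $c_\ver$ relative to the trivial MRC fibration of $\P^4_\C$: Vanishings~II and~III applied link by link should pair each such extracted bundle with its eventual contraction within the chain. Contributions $[\P^1][S]$ with $S$ uniruled should similarly cancel after a further reduction on the $3$-fold factor. What remains is $\sum_S n_S [\P^1][S]$ with $S$ running over non-uniruled, hence K-nef minimal surfaces; moreover, since $\P^4_\C$ itself contains no non-uniruled extractable surface, each surviving $S$ must appear with an opposite center $S'$ inside some Sarkisov link, producing the pairing structure $[\P^1]([S] - [S'])$.

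Finally, one must identify each such $(S, S')$ as D-equivalent. The two established families of relevant pairings, namely Hassett--Lai's K3 construction~\cite{HassettLai} and the elliptic-surface construction used in Theorem~\ref{thm-unbounded} via Bridgeland~\cite{Bridgeland-elliptic}, give exactly D-equivalent pairs. The main obstacle, and the reason this remains a conjecture, is proving that these are the only possibilities: every Sarkisov link of a rationally connected $4$-fold that extracts a K-nef surface should do so in a D-equivalent pair. A plausible strategy is to construct a refinement of $c$ taking values in a Burnside-style group built from derived categories, analogous to the role of $K_0(\Var_\C)/\mathbb{L}$ in the absolute theory, and to show that such a refinement vanishes on $\Bir(\P^4_\C)$; absent this, completing the proof would require a near-classification of Sarkisov links of rationally connected $4$-folds, which is beyond current techniques.
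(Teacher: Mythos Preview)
This statement is a \emph{conjecture}, and the paper does not prove it. The paper only remarks that it would follow from a conjectural lifting of the theory of Hodge atoms to derived categories --- specifically, from the existence of canonical (up to mutation) semiorthogonal decompositions of derived categories of rational smooth projective $4$-folds, compatible with smooth blow-ups. So there is no proof in the paper to compare against; you rightly acknowledge at the end that your argument does not close.

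That said, your sketch contains concrete errors that would need fixing before it becomes a coherent strategy. First, the ``vertical invariant relative to the trivial MRC fibration of $\P^4_\C$'' is vacuous: when the base is $\Spec(\C)$ there are no vertical divisors, so $c_\ver \equiv 0$ and $c = c_\hor$; Vanishings~II and~III give no information here and cannot be used ``link by link'' to cancel $[\P^3]$ or $[\P^2 \times C]$ terms. Second, divisorial contractions between terminal $4$-folds do not in general blow up smooth centers, so the clean trichotomy $\P^1 \times S$, $\P^2 \times C$, $\P^3$ for exceptional divisors is not available. Third, Theorem~\ref{thm:main-intro} concerns $c(\Bir(X))$ for $3$-folds $X$; it says nothing about how the birational classes of $3$-dimensional exceptional divisors inside a $4$-fold contribute to $c(\phi)$ --- that they enter only through their birational class is already the definition of $c$. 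Your closing suggestion, a derived-categorical refinement of $c$, is in spirit the same as the paper's own proposed route via canonical semiorthogonal decompositions, and is indeed the direction the authors point to.
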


In particular, surfaces of Kodaira dimension $\kappa = 2$ conjecturally will
not contribute to the image $c(\Bir(\P^4_\C))$,
because for such surfaces D-equivalence implies birationality~\cite[Theorem 2.3]{Kawamata-DK}.

Conjecture \ref{eq:P1-S-Sp} would follow if the theory of Hodge atoms constructed by Katzarkov, Kontsevich, Pantev and Yu \cite{KKPY-atoms} admits a lifting to derived categories, specifically
if  derived categories of rational
$4$-dimensional smooth projective varieties  admit canonical, up to mutations, semiorthogonal decompositions which are compatible with smooth blow ups.
This has been conjectured by Kontsevich as well as by Halpern-Leistner \cite{DHL-MMP}, and is currently known in dimension up to two \cite{ESS-atoms}.

Now consider $X = \P^3_\k$ for a field $\k$.
Recall that for an algebraically closed field $\k$ of characteristic zero, we have
$c(\Bir(\P^3_\k)) = 0$ by
Theorem \ref{thm:main-intro}.
Motivated by Corollary \ref{cor:P3-kB}, our previous work \cite{BirMot}, 
known examples of L-equivalence~\cite{KuznetsovShinder, ShinderZhang}, 
and the theory of Hodge atoms~\cite{KKPY-atoms},
we propose a conjectural description for $c(\Bir(\P^3_\k))$ over nonclosed fields.
The curves $C$ and $C'$ from Corollary \ref{cor:P3-kB}
are the D-equivalent and L-equivalent curves from \cite{ShinderZhang},
and 
all currently constructed nontrivial elements in 
$c(\Bir(\P^3_\k))$ 
are generated by such differences.
Furthermore, only curves with geometric irreducible components of genus $g \le 1$
can contribute \cite[Proposition 2.6]{BirMot}, and
there is no nontrivial D-equivalence among
curves with geometric irreducible components of genus
$g = 0$ by~\cite{BO-reconstruction}, because they are Fano.
Therefore we propose the following:

\begin{conjecture}
\label{conj:D-eq-P3}    
For any field $\k$, the image $c(\Bir(\P^3_\k))$ is generated by 
the differences of the form
$\P^1 \cdot ([C] - [C'])$
where $C$ and $C'$ are  smooth projective curves that are
D-equivalent and
whose geometric irreducible components have genus $1$.
\end{conjecture}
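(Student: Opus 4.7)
The plan is to combine a Sarkisov-type factorization of elements of $\Bir(\P^3_\k)$ with the genus restrictions coming from the earlier results, and then to match the surviving exceptional divisors in D-equivalent pairs using a (conjectural) derived-categorical refinement of the Hodge-atom theory. Given $\phi \in \Bir(\P^3_\k)$, I would first decompose it into a composition of Sarkisov links between Mori fibre spaces birational to $\P^3_\k$. For each elementary link, the exceptional divisors on either side are either geometrically rational surfaces (coming from blowups of $0$-dimensional centers) or ruled surfaces over geometrically integral curves $C/\k$, hence birational to $\P^1 \times C$. After grouping, this would write $c(\phi) \in \Burn_2(\k)$ as an alternating sum of classes $[\P^1 \times C_i]$ together with a residual contribution of classes of the form $[\P^2_K]$ over residue fields $K/\k$.

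The next step is to remove the unwanted contributions. The $[\P^2_K]$ terms should cancel through a direct analysis of how residue fields of centers are paired in Sarkisov links starting and ending at $\P^3_\k$: any point blown up in one link must be matched by a point of the same residue field blown down elsewhere in the composition, modulo further Cremona manipulations that are trivial on $c$. For the curve contributions, \cite[Proposition 2.6]{BirMot} applied componentwise eliminates all curves whose geometric irreducible components have genus $g \ge 2$. Bondal--Orlov reconstruction \cite{BO-reconstruction} eliminates genus-$0$ components, because D-equivalence of smooth rational curves reduces to isomorphism, so such components cannot produce nontrivial differences $[\P^1 \times C] - [\P^1 \times C']$. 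This would leave only smooth projective curves whose geometric irreducible components all have genus $1$, matching the statement of the conjecture.

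The core difficulty, and the step that appears genuinely out of reach with current tools, is to show that the surviving genus-$1$ contributions always appear in D-equivalent pairs. The natural strategy, anticipated in the introduction, is to lift the Hodge-atom matching of Katzarkov--Kontsevich--Pantev--Yu \cite{KKPY-atoms} to a derived-categorical matching via the conjecture of Kontsevich and Halpern-Leistner \cite{DHL-MMP}: a canonical-up-to-mutation semiorthogonal decomposition of $\Db(\wt X)$ for a smooth resolution $\wt X$ of $\phi$ that is compatible with smooth blowups would force the atoms read off from the $\phi$-side and from the $\phi^{-1}$-side to coincide, pairing each genus-$1$ curve $C$ with a derived-equivalent partner $C'$. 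Two substantial obstacles stand in the way: the Kontsevich--Halpern-Leistner conjecture is itself open in dimension three, and even granting it one must carry out a Galois-descent argument to conclude that the resulting derived equivalence is witnessed by a $\k$-linear Fourier--Mukai kernel rather than one defined only over $\ol{\k}$.
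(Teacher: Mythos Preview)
The statement you are attempting to prove is labeled \textbf{Conjecture} in the paper, and the paper offers no proof of it; it is presented as a speculation motivated by the known examples, by \cite[Proposition 2.6]{BirMot}, and by the Hodge-atom philosophy of \cite{KKPY-atoms}. There is therefore nothing in the paper to compare your argument against, and your proposal should not be read as an alternative route to an existing proof.

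Your outline is in fact not a proof either, and you say so yourself: the crucial step --- pairing the surviving genus-$1$ curves into D-equivalent pairs --- is conditional on the Kontsevich/Halpern-Leistner conjecture on canonical semiorthogonal decompositions, which is open in the relevant dimension. Two further issues are worth flagging. First, your elimination of genus-$0$ contributions is circular as written: you invoke Bondal--Orlov to say that D-equivalent genus-$0$ curves are isomorphic, but at that stage of the argument you have not yet established any D-equivalence between the curves appearing in $c(\phi)$; the paper uses \cite{BO-reconstruction} only as \emph{motivation} for the shape of the conjecture, not as a step in a proof. Second, the claimed cancellation of the $[\P^2_K]$ terms via a ``residue-field pairing'' across Sarkisov links is asserted without justification; no such pairing is known in general, and producing one would already be a substantial result. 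In short, what you have written is a reasonable heuristic sketch of why the conjecture is plausible --- largely recapitulating the paper's own discussion preceding Conjecture~\ref{conj:D-eq-P3} --- but it is not a proof, and the paper does not claim one.
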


\ssec*{Notation and conventions}

All varieties are integral, separated and of finite type over a field $\k$, but not necessarily geometrically integral.

\section{Horizontal and vertical motivic invariants}

In this section we set up the machinery of horizontal and vertical motivic
invariants for birational maps between
dominant maps
$\pi_1\colon X_1 \dto B_1$ and
$\pi_2\colon X_2 \dto B_2$.
If we assume that $\pi_1$ and $\pi_2$ are regular,
the theory is easier to set up; see Lemma \ref{lem-chorvar},
which can be considered as a definition in this case.
In general however, we need to pass to varieties over fields
$\k(B_1)$ and $\k(B_2)$ respectively, with birational maps acting on these base fields. We describe this formalism and define the corresponding motivic invariants in \S\ref{ss:relative}.
One of the nontrivial inputs in this direction is the vanishing result for surfaces, Theorem \ref{thm:intrinsic-centers}.
In \S \ref{ss:Stein} we introduce the rational Stein factorization which allows us to reduce computations of horizontal and vertical motivic invariants to the case where the geometric generic fibers of $\pi_1$ and $\pi_2$ are irreducible.

\subsection{Motivic invariants in the relative setting}
\label{ss:relative}

By $\ul{\Bir}/\k$, we mean the groupoid whose objects are algebraic varieties over $\k$ and whose morphisms are $\k$-birational maps.
This groupoid is anti-equivalent to 
the groupoid whose objects are 
finitely generated
field extensions $\k \subset L$ and
whose morphisms are $\k$-isomorphisms of field extensions.
We also use  
the graded Burnside group $\Burn_*(\k)$ \cite{KontsevichTschinkel},
which is
freely generated by birational isomorphism classes of $\k$-varieties (note that \cite{KontsevichTschinkel}  assumes that $\chr(\k) = 0$, and only uses smooth varieties, but we allow birational classes of any varieties).

The group $\Burn_*(\k)$ admits a graded ring structure, 
$$[X] \cdot [Y] = \sum_{i=1}^m [F_i],$$ 
where $F_i$ are the irreducible components of $(X \times_\k Y)_\redu$.
Note that if $\chr(\k) = 0$, then $X \times_\k Y$ is reduced, and
if $\k$ is algebraically closed, then $X \times_\k Y$ is integral.

We need to consider the relative versions
of $\ul{\Bir}/\k$ and $\Burn_*(\k)$,
which we denote by
$\wt{\ul{\Bir}/\k}$ and $\BURN_{*,*}(\k)$. The groupoid of relative birational types $\wt{\ul{\Bir}/\k}$ 
is defined as follows. 
Objects of $\wt{\ul{\Bir}/\k}$ are morphisms 
$X \to \Spec(\F)$ for a finitely generated field extension 
$\F/\k$, which makes $X$ 
an irreducible variety over $\F$. 
We write $X/\F$ for such an object.
A morphism between $X_1/\F_1$ and $X_2/\F_2$
is a 
commutative
diagram
\[\xymatrix{
X_1 \ar[d]_{\pi_1} \ar@{-->}[rr]^\phi && X_2 \ar[d]^{\pi_2} \\
\Spec(\F_1) \ar[rr]^\sigma_\simeq \ar[dr] & & \Spec(\F_2) \ar[dl] \\
& \Spec(\k) & 
}\]
where $\gs$ is an isomorphism of $\k$-extensions and
$\phi$ is a birational map if we regard both $X_1$ and $X_2$ as varieties over $\F_1$ (or $\F_2$). 
We sometimes denote such a morphism by 
$$\phi\colon X_1/\F_1 \dto X_2/\F_2.$$
Note that the category $\wt{\ul{\Bir}/\k}$ is equivalent to
the opposite category of the category of finitely generated field extensions $K/\F$ for some finitely generated $\k$-extension $\F$, with $\k$-isomorphisms of field extensions as morphisms.

Since every finitely generated field extension of $\k$ is realized by the function field of an algebraic variety,
the groupoid $\wt{\ul{\Bir}/\k}$ is equivalent to the groupoid whose objects are rational dominant maps $\pi\colon X \dto B$ between $\k$-varieties, and
whose morphisms are square birational maps as in~\eqref{eqn-introsq}.

For $n, d \ge 0$, we define the \emph{big Burnside group}
$\BURN_{n,d}(\k)$ as the free abelian group generated
by the isomorphism classes $[X/\F]$ of objects in $\wt{\Bir/\k}$ with $\dim(X/\F) = n$ and $\trdeg(F/\k) = d$. 
We set $\BURN_{*,*}(\k) = \bigoplus_{n,d} \BURN_{n,d}(\k)$, 
so that
the map
\[
\BURN_{*,*}(\k) \to \Burn_*(\k)
\]
sending $[X/\F]$ to 
$[\F(X)/\k]$ 
is a homomorphism of
graded abelian groups, with respect to the total degree on $\BURN_{*,*}(\k)$.

In this setting, we slightly generalize the motivic invariant $c(\phi)$ \cite{BirMot, LSZ20} defined by \eqref{eqn:c-intro}.
For every $\phi\colon X_1/\F_1 \dto X_2/\F_2$ as above, 
with $\dim(X_1) = \dim(X_2) = n$ and $\trdeg(F_1/\k) = \trdeg(F_2/\k) = d$,
we define 
\begin{equation}\label{eqn-genmotinv}
    c(\phi)
\cnec 
\sum_{E \in \ExDiv(\phi^{-1})}
 [E/\F_2]
 \ \ -  
 \sum_{D \in \ExDiv(\phi)} [D/\F_1] \ \ \in \ \ \BURN_{n-1,d}(\k)
\end{equation}
Here $\ExDiv(-)$ denotes the set of exceptional divisors of a map \cite[\S2.1]{BirMot}.
When $\F_1 = \F_2 = \k$ and $\sigma = \id$,
this definition of $c(\phi)$ coincides with that 
in~\cite{BirMot}.

\begin{lem}\label{lem:new-c-additive}
Given $\phi\colon X_1/\F_1 \dto X_2/\F_2$
and $\psi\colon X_2/\F_2 \dto X_3/\F_3$
we have 
\begin{equation}\label{eq:c-field-add}
c(\psi \phi) = c(\phi) + c(\psi).
\end{equation}
\end{lem}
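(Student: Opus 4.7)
The plan is to reduce the claimed identity to the absolute cocycle property of the motivic invariant over a single base field, essentially as established in~\cite{BirMot, LSZ20}.

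First, I would use the isomorphisms $\sigma\colon \F_1 \xrightarrow{\sim} \F_2$ underlying $\phi$ and $\tau\colon \F_2 \xrightarrow{\sim} \F_3$ underlying $\psi$ to identify $\F_1, \F_2$ and $\F_3$ with a single finitely generated extension $\F/\k$. After this identification, $X_1, X_2, X_3$ are varieties over $\F$ while $\phi, \psi, \psi\phi$ are genuine $\F$-birational maps; and since $\sigma, \tau$ are $\k$-isomorphisms, for any $\F$-variety $Y$ occurring as an exceptional divisor, its class $[Y/\F] \in \BURN_{n-1, d}(\k)$ does not depend on which identification is used, so that the three invariants $c(\phi), c(\psi), c(\psi\phi)$ all land in the same summand of $\BURN_{*,*}(\k)$ and can be compared termwise.

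Next, I would choose a proper $\F$-variety $\tilde X$ equipped with proper birational morphisms $p_i\colon \tilde X \to \bar X_i$ to proper models of the $X_i$, simultaneously resolving the three birational maps and their inverses. Such a $\tilde X$ exists by applying Nagata compactification to each $X_i$ and blowing up the graph closures of the three maps; no resolution of singularities is needed, since the following computation depends only on the divisorial valuations of the common function field $\F(\tilde X)$. For each prime divisor $F \subset \tilde X$ and each $i \in \{1,2,3\}$, set $\chi_i(F) \in \{0,1\}$ equal to $1$ iff $F$ is contracted by $p_i$; when $\chi_i(F) = 0$, the restriction $p_i|_F$ is birational onto its image, so that $[F/\F] = [p_i(F)/\F]$.

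The identity now follows by straightforward bookkeeping. For each ordered pair $i \ne j$ in $\{1,2,3\}$, the induced birational map $X_i \dto X_j$ satisfies
\[
c(X_i \dto X_j) = \sum_F \bigl(\chi_i(F) - \chi_j(F)\bigr)\,[F/\F],
\]
where the sum runs over prime divisors of $\tilde X$. Indeed, a divisor $F$ with $\chi_i(F) = 0$ and $\chi_j(F) = 1$ descends to an element of $\ExDiv(X_i \dto X_j)$ contributing $-[F/\F]$, a divisor with $\chi_i(F) = 1$ and $\chi_j(F) = 0$ descends to an element of $\ExDiv((X_i \dto X_j)^{-1})$ contributing $+[F/\F]$, and the remaining prime divisors contribute $0$; collecting these gives the elementary identity $\chi_i(1-\chi_j) - (1-\chi_i)\chi_j = \chi_i - \chi_j$. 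Telescoping the formulas for $(i,j) = (1,2)$ and $(2,3)$ yields the formula for $(i,j) = (1,3)$, which is precisely $c(\phi) + c(\psi) = c(\psi\phi)$. The main technical point is the existence of the common proper model without smoothness or characteristic assumptions; this is handled by Nagata compactification together with graph closures, or equivalently by rephrasing the whole argument in terms of divisorial valuations of the common function field, at which stage no explicit geometric model is required.
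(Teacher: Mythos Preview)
Your proof is correct and follows essentially the same approach as the paper: both reduce to a single base field by transporting $X_1$ and $X_2$ along the $\k$-isomorphisms $\sigma, \tau$ to regard everything over $\F_3$, and then invoke the absolute additivity of $c$ from \cite[Lemma 2.2]{BirMot}. The only difference is that you unpack the proof of that cited lemma via a common dominating model (equivalently, divisorial valuations), whereas the paper simply cites it; one small caution is that passing to proper models $\bar X_i$ can change the set of exceptional divisors, so the cleaner route is the valuation-theoretic rephrasing you mention at the end, or to take $\tilde X$ dominating the $X_i$ themselves by proper birational morphisms rather than their compactifications.
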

\begin{proof}
We have a commutative diagram
\[\xymatrix{
X_1 \ar[d]^{\pi_1} \ar@{-->}[r]^\phi & X_2 \ar@{-->}[r]^\psi \ar[d]^{\pi_2} & X_3\ar[d]^{\pi_3} \\
\Spec(\F_1) \ar[r]^\sigma_\simeq & \Spec(\F_2)  \ar[r]^\tau_\simeq & \Spec(\F_3) \\
}\]
We consider $X_1$, $X_2$ and $X_3$ as varieties over the same field $\F_3$, via the morphisms $\tau\sigma\pi_1$, $\tau\pi_2$ and $\pi_3$
so that the birational maps $\phi$ and $\psi$ are also over $\F_3$.
Then $c(\phi)$, $c(\psi)$, $c(\phi\psi)$ coincide with those defined in \cite{BirMot,LSZ20}
and we can use the additivity from \cite[Lemma 2.2]{BirMot}. 
\end{proof}

For a variety $X/\F$ 
and an isomorphism $\sigma\colon \F \xrightarrow{\sim}
\F'$, 
 we write
$\sigma X$ for the composition 
$$X \to \Spec(\F) \overset{\sim}{\to} \Spec(\F'),$$ 
and similarly for morphisms and birational maps 
between $\F$-varieties. Another way to think about $\sigma X$ is to notice that there is an $\F'$-isomorphism $\sigma X \simeq X \times_\F \F'$. 
For every birational map $\phi\colon X \dto Y$ between $\F$-varieties, 
it is clear that
$$c(X \overset{\phi} \dto Y) = c(\gs X \overset{\gs \phi} \dto \gs Y).$$

Since $\sigma X$ and $X$ are isomorphic as schemes, many standard numerical properties, such as the Kodaira dimension, 
the Picard rank and the (anti)canonical degree, are preserved under this operation. 
However, in general if $\sigma \in \Aut(\F)$, then $\sigma X$ and $X$ are not isomorphic as $\F$-varieties. 

The following is a variant of the main result of~\cite{LSZ20}
in the setting of relative birational types.
We will rely on this theorem when we analyze horizontal invariants between maps of relative dimension two, see Proposition \ref{prop:vanishingI}.

\begin{theorem}\cite{LSZ20}\label{thm:intrinsic-centers}
Let $\k$ be a field of characteristic zero.
There exists a canonical assignment
\[
S/\F \mapsto \cM(S/\F) \in \BURN_{0,\trdeg(\F/\k)}(\k),
\]
for geometrically integral 
surfaces over finitely generated extensions $\F/\k$,
satisfying the following properties:
\begin{enumerate}
    \item[(a)] For any $\phi\colon S_1/\F_1 \dto S_2/\F_2$
    we have $c(\phi) = \cM(S_2/\F_2) \cdot [\P^1_{\F_2}] - \cM(S_1/\F_1) \cdot [\P^1_{\F_1}]$. 
    \item[(b)] For every isomorphism $\sigma \colon \F \xrightarrow{\sim} \F'$,
    $\cM(\sigma S/\F') = \cM(S/\F)$
    
\end{enumerate}

In particular for any $S/\F$ and any birational map $\phi\colon S/\F \dto S/\F$  which possibly acts nontrivially on the base field $\F$ we have $c(\phi) = 0$.
\end{theorem}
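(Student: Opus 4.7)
The plan is to adapt the MMP/Sarkisov strategy of~\cite{LSZ20} to the relative setting and then verify functoriality under base-field isomorphisms.

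Fix a finitely generated extension $\F/\k$, which is automatically perfect since $\chr(\k)=0$. For a smooth projective geometrically integral surface $S/\F$ I would run the $\F$-rational MMP. Each elementary step contracts an $\F$-irreducible divisor $E/\F$ whose geometric components are $(-1)$-curves $\P^1_{\bar\F}$, permuted transitively by $\Gal(\bar\F/\F)$; the stabilizer of one component defines a finite separable extension $L/\F$, and because the $(-1)$-curve is contracted to a smooth $L$-rational point, no Brauer-Severi twist arises and the component is $\simeq\P^1_L$. Consequently $E\simeq\P^1_L$ as an $\F$-scheme and
\[
[E/\F] = [\P^1_\F]\cdot[\Spec(L)/\F] \quad\text{in } \BURN_{1,d}(\k),\quad d=\trdeg(\F/\k).
\]
Collecting the $[\Spec(L_i)/\F]$ across MMP steps (with signs) yields a candidate $\cM(S/\F)\in\BURN_{0,d}(\k)$, supplemented by a contribution from the minimal model. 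Independence from the MMP choice, together with extension to the Mori fiber space case, will follow from the Sarkisov program for surfaces over $\F$: any two MMPs differ by elementary links, and direct analysis should show each link contributes zero to the alternating sum.

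For property (a) with $\F_1=\F_2=\F$, I factor $\phi$ through a Hironaka resolution $S_1\leftarrow\tilde S\to S_2$, use additivity of $c$ (Lemma~\ref{lem:new-c-additive}), and apply the preceding calculation to each blow-up step. For a general $\phi\colon S_1/\F_1\dto S_2/\F_2$ with accompanying base isomorphism $\sigma\colon\F_1\xrightarrow{\sim}\F_2$, I reinterpret $\phi$ as an $\F_2$-birational map $\sigma S_1/\F_2\dto S_2/\F_2$, apply the same-field case, and use property (b) to identify $\cM(\sigma S_1/\F_2) = \cM(S_1/\F_1)$. Property (b) itself is built into the construction, since the MMP, Sarkisov links, and fields of definition of exceptional divisors are all functorial under $\k$-isomorphisms of $\F$. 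The ``In particular'' statement is then immediate: a self-map $\phi\colon S/\F\dto S/\F$ acting on $\F$ by $\sigma\in\Aut_\k(\F)$ is precisely a morphism $S/\F\dto\sigma S/\F$ in $\wt{\ul\Bir/\k}$, so by (a) and (b),
\[
c(\phi) = \cM(\sigma S/\F)\cdot[\P^1_\F] - \cM(S/\F)\cdot[\P^1_\F] = 0.
\]

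The main obstacle is the Mori fiber space case of the construction. For surfaces with $K$ nef, the unique smooth minimal model admits only pseudo-automorphisms (no exceptional divisors), so $\cM$ is essentially forced. For Mori fiber spaces over a non-closed field $\F$, however, the minimal model is non-unique and may involve conic bundles with nontrivial generic Brauer class; verifying that the formal invariant $\cM(S/\F)$ is stable under Sarkisov links of such conic bundles is the technical heart of the argument. Once this stability is established (following the strategy of~\cite{LSZ20}), the upgrade from the absolute to the relative setting is essentially formal bookkeeping.
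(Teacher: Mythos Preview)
Your proposal is correct and follows essentially the same approach as the paper: define $\cM$ on minimal models, extend to arbitrary surfaces via $\cM(S):=\cM(\ol S)-c^{o}(\phi)$ for a chosen minimal model $\phi\colon S\dto\ol S$, and invoke the Sarkisov analysis of~\cite{LSZ20} for well-definedness; property~(b) is immediate from naturality, and~(a) for general base-field isomorphisms is reduced to the same-field case exactly as you describe. The only point where the paper is more specific is the minimal-model contribution: rather than leaving it implicit, it sets $\cM(\ol S/\F)=\rk(\NS(\ol S))\cdot[\Spec\F]$ when $\ol S$ is geometrically irrational or $K_{\ol S}^2\le 4$, and uses the Hilbert-scheme formula of \cite[Definition~5.2]{LSZ20} for geometrically rational $\ol S$ with $K_{\ol S}^2\ge 5$---this dichotomy (rather than $K$-nef versus Mori fiber space) is what makes the Sarkisov check in~\cite{LSZ20} go through.
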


\begin{proof}
Our proof is the same as in \cite[\S5]{LSZ20}.
We first define $\cM(S)$
for minimal smooth projective geometrically integral surfaces as follows.

\begin{enumerate}
    \item If $S$ is geometrically irrational or 
    $K_S^2 \le 4$,
    set $\cM(S/\F) \cnec \rk(\NS(S)) \cdot  [\Spec (\F)]$; 
    \item If $S$ is geometrically rational with degree $K_S^2 \ge 5$,
    define $\cM(S/\F)$ using Hilbert schemes of curves on $S$ of certain anticanonical degrees as in \cite[Definition 5.2]{LSZ20}.
\end{enumerate}

We check that with this definition conditions (a) and (b) hold for minimal smooth projective geometrically integral surfaces.
Condition (b) is trivially satisfied in case (1),
and in case (2)
it follows from
$\sigma\cH_j(S) = \cH_j(\sigma S)$,
where $\cH_j(S)$ is the Hilbert schemes parameterizing curves of degree $j$ (with respect to $-K_S$).
Using condition (b), to check (a) we can replace $S_1/\F_1$ by $\sigma S_1/\F_2$ and assume that $\phi$ induces the identity map on the base field.
Under this assumption any birational map 
between minimal surfaces $S$, $S'$
satisfies condition (a) 
by~\cite[Proposition 4.4, Proposition 5.5]{LSZ20}.

The paper \cite{LSZ20} uses a different normalization for $c(\phi)$, as a zero-dimensional class, not as a combination of exceptional divisors as we do in \cite{BirMot}
and in this paper. Let us write $c^{o}(\phi)$ for the invariant from \cite{LSZ20} so that by construction we have, for every birational map
$\phi \colon S_1 \dto S_2$  acting trivially on the base field $\F$,
\[
c(\phi) = c^{o}(\phi)[\P^1].
\]

Now, for an arbitrary geometrically integral surface $S$ we take a minimal smooth projective model $\phi\colon S \dto \ol{S}$
and define
\begin{equation}\label{eq:M-S-arbitrary}
\cM(S) := \cM(\ol{S}) - c^o(\phi).
\end{equation}
This is independent of the choice of $\phi$
because if $\psi\colon S \dto \ol{S}'$
is another such model, then
using (a) applied to $\phi \psi^{-1}$
and the additivity of $c$ we get
\[
\cM(\ol{S}) - c^o(\phi) = 
\cM(\ol{S}') + c^o(\phi \psi^{-1}) - c^o(\phi) = 
\cM(\ol{S}') - c^o(\psi). 
\]

A very similar computation extends
(a) 
from birational maps between minimal surfaces
to birational maps between arbitrary surfaces.
For (b) we note that if $\phi\colon S \dto \ol{S}$ is a minimal smooth projective model of $S$, then $\sigma \phi$ can be taken as a model of $\sigma S$, so that using (b) for the minimal surface $\ol{S}$ we get
\[
\cM(\sigma S/\F') = 
\cM(\sigma\ol{S}/\F') - c^o(\sigma\phi) =
\cM(\ol{S}/\F) - c^o(\phi) = 
\cM(S/\F).
\]
\end{proof}

\ssec{Horizontal and vertical motivic invariants}
\label{ssec:hor-vert-c}

Let
$\pi_1 \colon X_1 \dto B_1$ and $\pi_2\colon X_2 \dto B_2$
be two dominant rational maps of $\k$-varieties.
We say that $\phi\colon X_1 \dto X_2$
is a birational map between $\pi_1$
and $\pi_2$
if it fits into a commutative diagram
\begin{equation}\label{eqn-squareRat}
    \xymatrix{
    X_1 \ar@{-->}[d]_{\pi_1} \ar@{-->}[r]^\phi & X_2 \ar@{-->}[d]^{\pi_2}\\
    B_1 \ar@{-->}[r]^\sigma
    & B_2
    }
\end{equation}
for a birational map $\sigma$. Note that $\sigma$ is uniquely determined by $\phi$.
We write 
$\Bir(\pi_1,\pi_2)$
for the set of birational maps between $\pi_1$ and $\pi_2$.
We also write $\Bir(\pi)$ for $\Bir(\pi_1, \pi_2)$ when $\pi_1 = \pi_2 = \pi \colon X \dto B$. 
We use the notation
$$\Bir(X/B) \le \Bir(\pi)$$
for the subgroup of $\Bir(\pi)$
consisting of birational automorphisms of $X$ which descend
to the identity on $B$.

For $i = 1,2$,
    let $U_i \subset X_i$ be the open complement of the indeterminacy locus of $\pi_i$. 
Then passing to the generic points of the bases
we obtain a bijection
\begin{equation}\label{eq:bijection-Mor-generic}
\begin{split}
\Bir(\pi_1,\pi_2) & \simeq 
\Mor_{\wt{\Bir/\k}}(U_{1, \k(B_1)}/\k(B_1),U_{2, \k(B_2)}/\k(B_2)) \\
\phi & \mapsto \phi^\eta.    
\end{split}
\end{equation}
The varieties $U_{1, \k(B_1)}$ and $U_{1, \k(B_2)}$ 
have the same dimension $n$, 
equal to the relative dimension of $\pi_1$ and $\pi_2$.
Let $d = \dim(B_1) = \dim(B_2)$.

\begin{Def}\label{def-chor1}
The \emph{horizontal motivic invariant}
$c_\hor(\phi)$
is the image of $c(\phi^\eta)$
defined in \eqref{eqn-genmotinv}
under the forgetful map $\BURN_{n-1,d}(\k) \to \Burn_{n+d-1}(\k)$.
The \emph{vertical motivic invariant} is defined by
\[
c_{\ver}(\phi) \cnec c(\phi) - c_{\hor}(\phi) \in \Burn_{n+d-1}(\k).
\]
\end{Def}

\begin{lemma}\label{lem:add}
Both $c_{\hor}$ and $c_{\ver}$ are additive under compositions: 
if $\phi \in \Bir(\pi_1, \pi_2)$
and $\psi \in \Bir(\pi_2, \pi_3)$ then
\[
c_\hor(\psi \circ \phi) = c_\hor(\phi) + c_\hor(\psi),
\quad
c_\ver(\psi \circ \phi) = c_\ver(\phi) + c_\ver(\psi).
\]
\end{lemma}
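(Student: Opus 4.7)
The plan is to deduce both additivities from the already established additivity of the relative invariant $c$ on the groupoid $\wt{\ul{\Bir}/\k}$ (Lemma~\ref{lem:new-c-additive}) together with the additivity of the absolute invariant $c$ from \cite[Lemma 2.2]{BirMot}. The bridge between these is the generic fiber construction $\phi \mapsto \phi^\eta$ from~\eqref{eq:bijection-Mor-generic} and the forgetful homomorphism $\BURN_{n-1,d}(\k) \to \Burn_{n+d-1}(\k)$.

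First, I would verify that the assignment $\phi \mapsto \phi^\eta$ is functorial with respect to composition: given $\phi \in \Bir(\pi_1,\pi_2)$ and $\psi \in \Bir(\pi_2,\pi_3)$, the identity $(\psi \circ \phi)^\eta = \psi^\eta \circ \phi^\eta$ in $\wt{\ul{\Bir}/\k}$ holds because passing to the generic point of the base commutes with restriction of rational maps to dense opens and with composition of such maps. Concretely, one shrinks $U_1, U_2, U_3$ if necessary so that $\phi$ and $\psi$ are defined and $\pi_1, \pi_2, \pi_3$ are regular on them, localizes along $\Spec(\k(B_1))$, $\Spec(\k(B_2))$, $\Spec(\k(B_3))$, and uses that the base isomorphism attached to $\psi \circ \phi$ is the composite of those attached to $\phi$ and $\psi$.

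Given functoriality of $(-)^\eta$, Lemma~\ref{lem:new-c-additive} yields
\[
c\bigl((\psi\circ\phi)^\eta\bigr) = c(\phi^\eta) + c(\psi^\eta) \quad \text{in } \BURN_{n-1,d}(\k).
\]
Applying the forgetful homomorphism $\BURN_{n-1,d}(\k) \to \Burn_{n+d-1}(\k)$, which is a group homomorphism by construction, gives the horizontal additivity $c_\hor(\psi\circ\phi) = c_\hor(\phi) + c_\hor(\psi)$.

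For the vertical part, I would invoke the decomposition $c(\phi) = c_\hor(\phi) + c_\ver(\phi)$ in $\Burn_{n+d-1}(\k)$ recorded in the Introduction and used to \emph{define} $c_\ver(\phi) \cnec c(\phi) - c_\hor(\phi)$, where here $c$ denotes the absolute invariant from~\eqref{eqn:c-intro} applied to the underlying birational map $\phi\colon X_1 \dto X_2$. Since the absolute $c$ is additive under composition by \cite[Lemma~2.2]{BirMot}, and $c_\hor$ is additive by the step above, subtracting gives $c_\ver(\psi\circ\phi) = c_\ver(\phi) + c_\ver(\psi)$. The only genuinely nontrivial ingredient is Lemma~\ref{lem:new-c-additive}; everything else is functoriality of the generic fiber and linearity of the forgetful map, so I do not expect any serious obstacle beyond carefully writing out the identification~\eqref{eq:bijection-Mor-generic} for the triple composition.
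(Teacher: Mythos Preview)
Your proposal is correct and matches the paper's proof essentially verbatim: the paper also derives $c_\hor$-additivity from Lemma~\ref{lem:new-c-additive} plus the forgetful homomorphism, and then obtains $c_\ver$-additivity by subtracting from the additivity of the absolute $c$. The only cosmetic difference is that you cite \cite[Lemma~2.2]{BirMot} directly for the absolute $c$, whereas the paper re-invokes Lemma~\ref{lem:new-c-additive} (whose proof in turn rests on that same citation); your explicit mention of the functoriality $(\psi\circ\phi)^\eta = \psi^\eta\circ\phi^\eta$ is left implicit in the paper.
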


\begin{proof}
Additivity of $c_{\hor}$ follows from Lemma \ref{lem:new-c-additive}
and the additivity of the forgetful map
$\BURN_{*,*}(\k) \to \Burn_*(\k)$.
Since $c = c_\hor + c_\ver$
is also additive by Lemma~\ref{lem:new-c-additive},
$c_\ver$ is additive as well.
\end{proof}

Additivity of the invariants has the following  useful consequence. 
Let $\pi \colon X \dto B$ and $\phi \in \Bir(\pi)$. Consider arbitrary dense open subsets $U \subset X$, $V \subset B$. Write $\pi^{U}_V\colon U \dto V$ and $\phi^U_V \colon U \dto U$ for the induced dominant rational maps obtained by restriction of $\pi$ and $\phi$ respectively, 
so that $\phi^U_V \in \Bir(\pi^U_V)$.
The open embedding $j_U \colon U \hookrightarrow X$ can be considered as an element $j_U \in \Bir(\pi^U_V, \pi)$, and we have 
$\phi^U_V = j_U^{-1} \circ \phi \circ j_U$. Thus Lemma \ref{lem:add} implies that
\begin{equation}\label{eq:phi-phiU}
c_\hor(\phi) = c_\hor(\phi^U_V), \quad    
c_\ver(\phi) = c_\ver(\phi^U_V).
\end{equation}

We will often work with regular morphisms $\pi_1$ and $\pi_2$, 
in which case $c_\hor(\phi)$ and $c_\ver(\phi)$ 
have a clear geometric meaning
as described in the following lemma.

\begin{lem}\label{lem-chorvar}
If $\pi_1$ and $\pi_2$ are regular in codimension $1$ (i.e. defined on the generic points of all divisors), then for every $\phi \in \Bir(\pi_1,\pi_2)$, 
    we have
\begin{equation}\label{eqn-chorvar}
c_{\hor}(\phi) = 
\sum_{\substack{E \in \ExDiv(\phi^{-1}) 
\\ \ol{\pi_2(E)} = B_2}} [E]
 \ \ -  \sum_{\substack{D \in \ExDiv(\phi)\\ 
\ol{\pi_1(D)} = B_1}} [D] \ \ \in \ \ \Burn_*(\k),
\end{equation}
\begin{equation}\label{eqn-cvervar}
c_{\ver}(\phi) = 
\sum_{\substack{E \in \ExDiv(\phi^{-1}) 
\\ {\ol{\pi_2(E)}} \ne B_2}} [E]
 \ \ -  \sum_{\substack{D \in \ExDiv(\phi)\\ 
{\ol{\pi_1(D)}} \ne B_1}} [D] \ \ \in \ \ \Burn_*(\k).
\end{equation}
\end{lem}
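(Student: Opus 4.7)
The plan is to verify that the definition of $c_\hor(\phi)$ through the generic fiber $\phi^\eta$ picks out exactly the horizontal exceptional divisors, and that the forgetful map $\BURN_{n-1,d}(\k) \to \Burn_{n+d-1}(\k)$ matches the identification $D \leftrightsquigarrow D_\eta$ at the level of function fields.

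First, I would set up the correspondence between divisors. Since $\pi_1$ is regular in codimension one, every prime divisor $D \subset X_1$ has generic point lying in the locus of definition of $\pi_1$, so the condition $\overline{\pi_1(D)} = B_1$ is well-defined on each prime divisor. For such a horizontal $D$, its generic fiber $D_\eta := D \times_{B_1} \Spec(\k(B_1))$ is a prime divisor on $U_{1,\k(B_1)}$, and one has $\k(D_\eta) = \k(D)$ as extensions of $\k$. Conversely, every prime divisor on $U_{1,\k(B_1)}$ arises this way by Zariski closure, while vertical prime divisors of $X_1$ do not meet $U_{1,\k(B_1)}$ and hence contribute nothing on the generic fiber. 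The same discussion applies on the $X_2$-side.

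Next, I would verify the induced bijection between horizontal exceptional divisors of $\phi$ and exceptional divisors of $\phi^\eta$. Taking a common resolution $Y \to X_1$, $Y \to X_2$ of $\phi$ and base-changing to $\Spec(\k(B_1))$ gives a common resolution of $\phi^\eta$. A horizontal prime divisor $D \subset X_1$ is exceptional for $\phi$ precisely when its strict transform in $Y$ is contracted by $Y \to X_2$; passing to generic fibers this is precisely the condition that the strict transform of $D_\eta$ is contracted by $Y_\eta \to U_{2,\k(B_2)}$, i.e.\ that $D_\eta$ is exceptional for $\phi^\eta$. The analogous statement for $\phi^{-1}$ is identical. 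Combining these two steps, the expression \eqref{eqn-genmotinv} for $c(\phi^\eta)$ is a sum over $D_\eta/\k(B_i)$ indexed by horizontal exceptional divisors, and since the forgetful map sends $[D_\eta/\k(B_1)]$ to $[D]$, one obtains formula \eqref{eqn-chorvar}. Equation \eqref{eqn-cvervar} then follows by subtracting \eqref{eqn-chorvar} from the absolute invariant $c(\phi) = \sum_{E \in \ExDiv(\phi^{-1})}[E] - \sum_{D \in \ExDiv(\phi)}[D]$, since each exceptional divisor is either horizontal or vertical.

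The main subtlety will be handling the case where $\pi_i$ is merely regular in codimension one rather than regular everywhere, so as to make sense of the common resolution statement cleanly. This is resolved by noting that all invariants appearing in the desired identity are stable under shrinking $X_i$ and $B_i$ to dense open subsets: the right-hand sides of \eqref{eqn-chorvar} and \eqref{eqn-cvervar} depend only on the generic points of prime divisors, and the left-hand sides are invariant by \eqref{eq:phi-phiU}. One may therefore replace $X_i$ by an open $U_i$ on which $\pi_i$ is an everywhere-regular morphism, carry out the above argument verbatim there, and conclude.
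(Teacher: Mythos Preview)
Your proposal is correct and follows essentially the same approach as the paper: both rest on the bijection between prime divisors on $X_i$ dominating $B_i$ and prime divisors on the generic fiber $U_{i,\k(B_i)}$ (with inverse given by Zariski closure), after which \eqref{eqn-cvervar} follows by subtraction. The paper's proof simply asserts this bijection and stops, whereas you additionally spell out why it respects exceptionality and address the codimension-one regularity hypothesis; note, however, that \eqref{eq:phi-phiU} is only stated for $\phi\in\Bir(\pi)$, so you should either remark that the additivity argument of Lemma~\ref{lem:add} works identically for $\Bir(\pi_1,\pi_2)$, or simply observe that since $X_i\setminus U_i$ has codimension $\ge 2$ no prime divisors are lost and no shrinking-invariance is actually needed.
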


\begin{proof}
The first formula is a consequence of the one-to-one correspondence
\begin{equation*}
  \begin{split}
      \Set{\text{Prime divisors on } X_i \text{ dominating } B_i} & \xto{1:1} \Set{ \text{Prime divisors on } X_{i,\k(B_i)}} \\
      D & \mapsto D|_{\k(B_i)} 
  \end{split}  
\end{equation*}
with inverse defined by taking Zariski closure.
The second formula follows from the first one and the definition of $c_\ver(\phi)$.
\end{proof}

\ssec{Rational Stein factorizations}
\label{ss:Stein}

The following is a rational version of the usual Stein factorization  \cite[Corollary III.11.5]{Hart},
a notion that already appears in~\cite[Lemma 4.7]{DSW-Weak-Approx}.

\begin{Def}\label{def-RStein}
Let $\pi\colon X \dto B$ 
be a rational dominant map of $\k$-varieties.
The \emph{rational Stein factorization} of $\pi$ is the factorization
$$\pi \colon X \overset{\wt{\pi}} \dto \wt{B} \overset{f}\to B$$
where $\wt{B}$ is the normalization of $B$ in $\k(X)$.
We refer to $\wt{\pi}$ and $f$ as the \emph{connected part} and the \emph{finite part} of $\pi$ respectively.
The degree of $\k(\wt{B})/\k(B)$ 
in the rational Stein factorization
is called the \emph{Stein degree} of $\pi \colon X \dto B$.
\end{Def}

\begin{remark} 
The notion of Stein degree for the usual Stein factorization of a regular proper morphism was introduced in \cite[\S3]{BirkarModAlgVar}, where it was conjectured that
the horizontal components of the boundary for log Calabi--Yau fibrations have bounded Stein degree over the base.
This conjecture was recently proven in \cite{BirkarQu1};
see also \cite{BirkarQu2}.
Note that by Proposition \ref{prop:Stein}(2), if $X$ is normal and $\pi$ is surjective, the Stein degree as we define it coincides with the one in \cite{BirkarModAlgVar}.
\end{remark}

\begin{prop} \label{prop:Stein}
Let $\pi\colon X \dto B$ is a rational dominant map.
The rational Stein factorization in Definition~\ref{def-RStein}
has the following properties:
\begin{enumerate}
    \item $\wt{B}$ is normal and $f$ is a finite morphism.
    \item 
    If $\pi\colon X \to B$ is a proper surjective morphism, with (usual) Stein factorization 
    \[
    X \xrightarrow{\wt{\pi}} \wt{B} \xrightarrow{f} B,
    \]
    then the rational Stein factorization of $\pi$ is 
    $$X \overset{\nu^{-1}\wt{\pi}}\dto \wt{B}^\nu \overset{f\nu}\to B$$ 
    where $\nu\colon \wt{B}^{\nu} \to \wt{B}$ is the normalization of $\wt{B}$. In particular, if we additionally assume that $X$ is normal, then the rational Stein factorization of $\pi$ coincides with the usual one.
    \item 
    Assume that $\pi$ is a proper morphism 
    from a normal variety $X$.
    The geometric generic fiber of $\wt{\pi}$ is irreducible. It is integral if $\chr (\k) = 0$.
    \item Every birational map $\phi \in \Bir(\pi)$ 
induces a birational map on $\wt{B}$, so that we have 
$$\Bir(\pi) \subset \Bir(\wt{\pi}) \subset \Bir(X).$$
\end{enumerate}
\end{prop}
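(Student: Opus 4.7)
The plan is to establish the four parts in sequence. The unifying object is the field $M$, defined as the algebraic closure of $\k(B)$ inside $\k(X)$. One expects $M$ to equal $\k(\wt{B})$ throughout, and the main obstacle will be part (2), where for arbitrary integral $X$ the rational Stein factorization must be matched with the normalization of the usual Stein factorization.

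For (1), the starting point is to note that $M/\k(B)$ is a finite field extension: any algebraic subextension of a finitely generated field extension is finite. Once this is available, Nagata's finiteness theorem ensures the integral closure of $\OO_B$ in $M$ is a finite $\OO_B$-module, so $f\colon \wt{B} \to B$ is finite, and normality of $\wt{B}$ is automatic from the construction as an integral closure.

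For (2), the plan is to first treat $X$ normal. In that case, $\wt{B}_{\mathrm{usu}} \cnec \Spec_B(\pi_*\OO_X)$ is normal with function field $A \cnec H^0(X_\eta, \OO_{X_\eta})$, a finite subextension of $\k(B) \subseteq M$ inside $\k(X)$. To check $A = M$, take $\alpha \in M$, observe that $\alpha$ satisfies a monic polynomial over $\k(B) \subseteq \OO_X(X)$ so is integral over $\OO_X$, and use normality of $X$ to conclude $\alpha \in \OO_X(X) \subseteq A$. Thus $\wt{B}_{\mathrm{usu}}$ is normal, finite over $B$, with function field $M$, identifying it with the rational Stein factorization. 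For general integral $X$, observe that the rational Stein factorization depends only on the field extension $\k(X)/\k(B)$, so it coincides with that of the normalization $\nu \colon X^\nu \to X$, which by the previous step is the usual Stein factorization of $X^\nu$. Finally, since $\nu_*\OO_{X^\nu}$ is the integral closure of $\OO_X$ in $\k(X)$, the sheaf $\pi_*\nu_*\OO_{X^\nu}$ is the integral closure of $\pi_*\OO_X$ in $M$, so $\Spec_B(\pi_*\nu_*\OO_{X^\nu})$ is precisely the normalization of $\wt{B}_{\mathrm{usu}}$.

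For (3), by (2) the morphism $\wt{\pi} \colon X \to \wt{B}$ is regular with $\wt{B}$ normal and $H^0(X_\eta, \OO_{X_\eta}) = M = \k(\wt{B})$, so $X_\eta$ is integral normal and proper over $M$. Geometric irreducibility of $X_\eta$ is equivalent to $M$ being separably algebraically closed in $\k(X_\eta) = \k(X)$, which holds because by definition $M$ is even algebraically closed in $\k(X)$. In characteristic zero, reducedness of $X_\eta$ is preserved under base change to $\ol{M}$, yielding integrality. For (4), any $\phi \in \Bir(\pi)$ induces a $\k$-algebra automorphism $\phi^* \colon \k(X) \to \k(X)$ carrying $\k(B)$ isomorphically onto itself via the induced birational map $\sigma$ on $B$; it therefore preserves the algebraic closure $M = \k(\wt{B})$, producing a birational self-map of $\wt{B}$ completing the required square. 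The inclusion $\Bir(\wt{\pi}) \subseteq \Bir(X)$ is formal.
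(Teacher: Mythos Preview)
Your treatment of (1), (3), and (4) is correct and in fact more explicit than the paper's, which simply asserts that (1) and (4) ``follow from the construction'' and handles (3) by the same field-theoretic criterion you use.

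In (2) for normal $X$ there is a notational slip: the inclusion ``$\k(B)\subseteq\OO_X(X)$'' is false in general (take $B=\P^1$). What you intend is the argument on the generic fibre: $\k(B)\subseteq\OO_{X_\eta}(X_\eta)=A$, and since $X_\eta$ is normal, any $\alpha\in M$ is integral over $A$ and hence lies in $A$. With this correction your proof of the normal case is fine and agrees in substance with the paper's citation of \cite[035L]{stacks-project}.

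The genuine gap is in your reduction for non-normal $X$. You assert that $\Spec_B(\pi_*\nu_*\OO_{X^\nu})$ is the normalization of $\wt B_{\mathrm{usu}}=\Spec_B(\pi_*\OO_X)$. But normalization of $\wt B_{\mathrm{usu}}$ is taken in its \emph{own} function field $A'=H^0(X_\eta,\OO_{X_\eta})$, whereas your computation only identifies $\pi_*\nu_*\OO_{X^\nu}$ with the integral closure of $\pi_*\OO_X$ in $M$. These disagree whenever $A'\subsetneq M$, and this can occur: over $B=\Spec\C[t,t^{-1}]$ with double cover $\wt B\to B$, $s\mapsto s^2$, glue the $0$- and $\infty$-sections of $\P^1\times\wt B$ via the deck involution $s\mapsto -s$. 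The result $X$ is integral and proper over $B$ with $\pi_*\OO_X=\OO_B$, so $A'=\C(t)$; but $\k(X)=\C(s)(u)$, so $M=\C(\sqrt t)$. Here $\wt B_{\mathrm{usu}}=B$ is already normal, yet the rational Stein base has function field $\C(\sqrt t)$. The paper's own argument at this step (``since $\wt B$ is normal in $X$, necessarily $B'\to\wt B$ has degree $1$'') has the same issue; in practice the paper only invokes (2) when $X$ is normal, where both proofs are valid.
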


\begin{proof} 
    (1) and (4) follow from the construction.
    
For (2), let $X \dto B' \to B$ denote the rational Stein factorization of $\pi$.
By definition, $\wt{B}$ is the normalization of $B$ in $X$, thus
by the functoriality of relative normalizations~\cite[035J]{stacks-project}, we have a commutative diagram 
$$
    \xymatrix{
    \Spec (\k(X)) \ar[d] \ar[r] & X \ar[d]^{\ti{\pi}} \\
    B' \ar[r]   & \wt{B}
    }
$$
over $B$. Note that by (1) and the properness of $\pi$,
both $B'$ and $\wt{B}$ are finite over $B$, so $B' \to \wt{B}$ is finite as well. 
Since $\wt{B}$ is normal in $X$, necessarily $B' \to \wt{B}$
has degree $1$. As $B'$ is normal by (1), it follows from the Zariski main theorem that $B' \to \wt{B}$ is the normalization of $\wt{B}$. Finally, if $X$ is further assumed normal, then $\wt{B}$ is already normal~\cite[Lemma 035L]{stacks-project}. This proves (2).

    For (3), we note that since $X$ is an integral $\k$-variety,
    the generic fiber $X_{\k(\wt{B})}$ of $\wt{\pi}$ is irreducible and reduced~\cite[Lemma 054Z]{stacks-project}.
    As $\k(\wt{B})$ is separably closed in $\k(X)$
    by construction, 
    the geometric generic fiber of $\wt{\pi}$ is irreducible
    by~\cite[Corollary 3.2.14.(d)]{Liu-AG}.
    The second statement follows from~\cite[Lemma 2.6.4]{MR2445111}
    and~\cite[Corollary 3.2.14.(c)]{Liu-AG}.
\end{proof}

Using rational Stein factorizations, the computation of horizontal and vertical
motivic invariants reduces to the case of 
birational maps between rational dominant maps with irreducible
geometric generic fibers.

\begin{cor}\label{cor-SFchor}
    Let $\pi\colon X \dto B$ be a dominant
rational map of $\k$-varieties and let
$\ti{\pi} \colon X \dto \wt{B}$ 
be the connected part of the rational Stein factorization of $\pi$.
Then 
for any $\phi \in \Bir(\pi) \subset \Bir(\ti{\pi})$, the invariants $c_\hor(\phi)$
and $c_\ver(\phi)$
do not depend on whether $\phi$ is considered as an element of $\Bir(\pi)$ or of $\Bir(\wt{\pi})$.
\end{cor}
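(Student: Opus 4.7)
My plan is to reduce the statement to the regular setting and then compare the geometric formulas of Lemma~\ref{lem-chorvar} for $\pi$ and for $\wt{\pi}$, using finiteness of the map $f\colon \wt{B}\to B$.

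Concretely, I would first pass to a dense open $U\subset X$, together with dense opens of $B$ and $\wt{B}$, on which both $\pi$ and $\wt{\pi}$ restrict to regular morphisms. Such a $U$ exists since $\wt{B}\to B$ is itself a morphism and each fibration has a dense domain of regularity, so one can intersect the two domains inside $X$. Applying the identity~\eqref{eq:phi-phiU} once with respect to $\pi$ and once with respect to $\wt{\pi}$ shows that the horizontal and vertical invariants of $\phi$ are unchanged under this restriction, regardless of which fibration structure we use. Lemma~\ref{lem-chorvar} then expresses $c_\hor(\phi)$ in each case as a signed sum of classes $[E]$ over exceptional divisors $E\in \ExDiv(\phi^{\pm 1})$ whose image in the respective base is dense.

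The remaining point is the equality of the two sets of horizontal exceptional divisors. For any prime divisor $E\subset X$ at whose generic point both $\pi$ and $\wt{\pi}$ are defined, I have $\pi=f\circ\wt{\pi}$ with $f$ finite by Proposition~\ref{prop:Stein}(1), and $\dim\wt{B}=\dim B$. Since finite morphisms preserve dimensions of images, $\dim\overline{\pi(E)}=\dim\overline{\wt{\pi}(E)}$, hence $\overline{\pi(E)}=B$ iff $\overline{\wt{\pi}(E)}=\wt{B}$. The two expressions of Lemma~\ref{lem-chorvar} therefore produce identical elements of $\Burn_*(\k)$, giving the desired equality of horizontal invariants. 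The vertical invariants then agree automatically, because the absolute invariant $c(\phi)\in\Burn_*(\k)$ defined in~\eqref{eqn:c-intro} depends only on $X$ and $\phi$ and not on any base, while $c_\ver=c-c_\hor$ by Definition~\ref{def-chor1}.

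There is no serious technical obstacle here; the only subtlety is that one must apply~\eqref{eq:phi-phiU} consistently for both $\pi$ and $\wt{\pi}$ so as to make a common reduction to regular morphisms, after which everything collapses to the trivial fact that a finite morphism preserves the dimension of images.
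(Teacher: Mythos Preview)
Your proposal is correct and follows essentially the same route as the paper: restrict to a common open $U\subset X$ on which both $\pi$ and $\wt{\pi}$ are regular, invoke \eqref{eq:phi-phiU} for each fibration, observe via finiteness of $f$ that a prime divisor is horizontal over $B$ iff it is horizontal over $\wt{B}$, and conclude using Lemma~\ref{lem-chorvar} and $c_\ver=c-c_\hor$. The paper's argument is slightly terser (it simply asserts the horizontality equivalence from finiteness of $f$ without spelling out the dimension comparison), but the structure is identical.
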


\begin{proof}
Let $\ti{\phi} \in \Bir(\wt{\pi})$
denote the element identified with $\phi \in \Bir(\pi)$ by Proposition \ref{prop:Stein}(4).
Let $U \subset X$ be a nonempty Zariski open
such that both $\pi|_U$
and $\ti{\pi}|_U$ are regular. 
Since $f\colon \wt{B} \to B$ is finite, a prime divisor 
    in $U$ is horizontal over $\wt{B}$ if and only if 
    it is horizontal over $B$.
By Lemma~\ref{lem-chorvar},
this implies the middle equality in
$$c_\hor(\phi) = c_\hor(\phi|_{U}) 
= c_\hor(\ti{\phi}|_{U}) = c_\hor(\ti{\phi}).$$
The first and the last equalities follow from \eqref{eq:phi-phiU}.
This proves the statement for $c_\hor(\phi)$,
and thus for $c_\ver(\phi) = c(\phi) - c_\hor(\phi)$.
\end{proof}

We will use the following lemma in Section~\ref{sec-ubd}.

\begin{lem}\label{lem-bdStein} 

    Let $\pi\colon \cX \to T$ and $\mu\colon \cB \to T$ be morphisms
    of $\k$-varieties
    such that 
    every fiber of $\pi$ and $\mu$ is  
    geometrically integral. 
    Let
        $$
     \xymatrix@C=1em{
    \cX \ar[dr]_\pi\ar@{-->}[rr]^f  & & \cB  \ar[dl]^\mu\\
     & T & }
    $$
    be a dominant rational map over $T$ 
    such that the indeterminacy locus of 
    $f$ does not contain any fiber of $\pi$.
    Then there exists a locally closed stratification 
    $T = \bigsqcup_i T_i$ such that
    the Stein degree of the fiber $f_t \colon \cX_t \dto \cB_t$ of $f$ over
    $t \in T_i(\ol{\k})$ is constant for each $i$. 
\end{lem}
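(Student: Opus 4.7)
The plan is to proceed by Noetherian induction on $T$: it suffices to find a dense open $T^0 \subset T$ on which the Stein degree of $f_t$ is constant, then iterate on $T \setminus T^0$. As a preliminary reduction, the hypothesis on the indeterminacy locus lets us replace $\cX$ by a dense open $\cU$ on which $f$ restricts to a morphism $g\colon \cU \to \cB$ with $\cU \cap \cX_t$ dense in $\cX_t$ for every $t$. Since the Stein degree of $f_t$ is determined by the function field extension $\k(\cU_t)/\k(\cB_t)$, replacing $f_t$ by $g|_{\cU_t}$ does not change it. Using that the image of $g$ is a constructible subset of $\cB$ containing the generic point, after further shrinking $T$ we may also assume that $g_t$ is dominant for every $t \in T$.

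For the main step, we spread out the rational Stein factorization of the generic fiber over $T$. At the generic point $\eta = \Spec \k(T)$, the rational Stein factorization of $g_\eta$ takes the form $\cU_\eta \dto \wt{\cB}_\eta \xrightarrow{h_\eta} \cB_\eta$, with $\wt{\cB}_\eta$ integral and normal, $h_\eta$ finite of degree $d := [\k(\wt{\cB}_\eta):\k(\cB_\eta)]$, and the generic fiber of $\cU_\eta \dto \wt{\cB}_\eta$ geometrically integral by Proposition~\ref{prop:Stein}(3). Spreading out yields, over a dense open $T^0 \subset T$, a factorization $\cU|_{T^0} \dto \cW \xrightarrow{h} \cB|_{T^0}$ with $\cW$ integral and normal and $h$ finite of constant degree $d$. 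Restricting to any $t \in T^0$ produces a diagram $\cU_t \dto \cW_t \xrightarrow{h_t} \cB_t$; if this is the rational Stein factorization of $g_t$, then the Stein degree of $f_t$ is equal to $d$ for all $t \in T^0$, completing the inductive step.

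The main obstacle is ensuring that this fiberwise restriction really is the rational Stein factorization of $g_t$: concretely, that $\cW_t$ is integral and normal and that the generic fiber of $\cU_t \dto \cW_t$ over $\cW_t$ is geometrically integral. Each of these properties holds at $\eta$ by the construction of the Stein factorization, and each extends to a dense open $T^0 \subset T$ by standard constructibility and spreading-out results for morphisms of finite type: the constructibility of the number of geometric irreducible components of fibers (\textsc{EGA}~IV, Th\'eor\`eme~9.7.7) yields the irreducibility of $\cW_t$ and the geometric integrality of the generic fiber of $\cU_t \dto \cW_t$, while the openness of the geometrically normal locus (\textsc{EGA}~IV, Th\'eor\`eme~12.2.4) yields the normality of $\cW_t$. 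Noetherian induction applied to the strictly lower-dimensional complement $T \setminus T^0$ then yields the desired locally closed stratification of $T$.
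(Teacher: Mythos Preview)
Your proof is correct and takes a genuinely different route from the paper's. The paper first resolves the indeterminacy of $f$ and then compactifies over $\cB$ (via Nagata) to obtain a \emph{proper} morphism $\cX' \to \cB$ with $\cX'$ normal; it then takes the usual Stein factorization $\cX' \to \wt{\cB} \to \cB$, shrinks $T$ so that everything is flat, and invokes Grauert's base change theorem to see that $\cX'_t \to \wt{\cB}_t \to \cB_t$ is the Stein factorization of the fiber. Your argument instead avoids compactification entirely: you take the rational Stein factorization over the generic point of $T$, spread it out, and use the EGA constructibility theorems (number of geometric components, geometric normality) to show that the spread-out factorization remains the rational Stein factorization fiber by fiber over a dense open. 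Your approach is arguably more elementary since it stays in the world of rational maps and only uses constructibility, while the paper's approach trades this for the concreteness of the ordinary Stein factorization and Grauert's theorem.

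One small point: your citation of Proposition~\ref{prop:Stein}(3) for the geometric integrality of the generic fiber of $\cU_\eta \dto \wt{\cB}_\eta$ is not quite right, since that statement assumes $\pi$ is a proper morphism from a normal variety, and your $\cU_\eta$ is neither proper nor assumed normal. The conclusion you want, however, follows directly from the definition of the rational Stein factorization: by construction $\k(\wt{\cB}_\eta)$ is the algebraic closure of $\k(\cB_\eta)$ in $\k(\cU_\eta)$, so the geometric generic fiber is irreducible (and integral since $\chr(\k)=0$). You should cite this directly rather than Proposition~\ref{prop:Stein}(3). Also note that for the Stein-degree conclusion you do not actually need the normality of $\cW_t$; it suffices that $\cW_t$ is integral and that $\k(\cW_t)$ is algebraically closed in $\k(\cU_t)$, so you could drop the appeal to openness of geometric normality if you wish.
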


\begin{proof}
It suffices to show that there exists a nonempty 
Zariski open $U \subset T$
such that the Stein degree of the fiber $f_t \colon X_t \dto B_t$ of $f$ over
    $t \in U(\ol{\k})$ is constant. 

Up to shrinking $T$, 
we can assume that the non-normal locus of $\cX$
does not contain any fiber of $\pi$.
We can find a regular and proper replacement 
$f' \colon \cX' \to \cB$ of $f$ with $\cX'$ normal by first resolving the indeterminacy (by taking the normalization of the graph) and then applying Nagata compactification over $\cB$.
By further shrinking $T$, we can still assume that
every fiber of $\pi$ is geometrically integral.
Note that by construction, the restriction of 
$\cX' \to \cX$ to every fiber $\cX'_t$ of $\cX' \to T$
is birational onto $\cX_t$ for every $t \in T(\ol{\k})$. 
As Stein degree is a birational invariant, 
we can replace $f$ by $f'$.

Let
$$\cX \xto{\wt{f}} \wt{\cB} \to \cB$$
be the Stein factorization of $f \colon \cX \to \cB$.
Up to shrinking $\cX$, $\cB$, and $T$, we can assume that
$\pi$, $\mu$, and $\ti{f}$ are flat and surjective, 
with $f$ (and thus $\wt{f}$) 
remaining proper.
By Grauert's base change, the map
$$\cO_{\wt{\cB}_t} \simeq (\wt{f}_*\cO_\cX)|_{\wt{\cB}_t} \to  
\wt{f}_*\cO_{\cX_t}$$
is an isomorphism.
Thus
$$\cX_t \xto{\wt{f}} \wt{\cB}_t \to \cB_t$$
is the Stein factorization of the restriction 
$\cX_t \to \cB_t$ of $f$ to $\cX_t$.

It follows from Proposition~\ref{prop:Stein}(2)
that the Stein degree of $f_t$
is the degree of the finite morphism $\wt{\cB}_t \to {\cB}_t$,
which is constant for every $t \in U(\ol{\k})$
in some nonempty Zariski open $U \subset T$.
\end{proof}

\section{Computing motivic invariants}

In this section we present several ways of computing motivic invariants, in particular we prove some vanishing results for motivic invariants of self-maps. These are
Vanishing I (Proposition \ref{prop:vanishingI}),
Vanishing II (Corollary \ref{cor:vanishingII})
and Vanishing III (Corollary \ref{cor:vanishingIII}).
Along the way, we establish useful 
formulas to compute motivic invariants: 
Proposition \ref{pro:cvercons} in the regular flat case and
Theorem \ref{thm:c-ver-bir-triv} for vertical invariants for a special kind of regular morphisms that we call birationally trivial in codimension one (Definition \ref{def:bir-trivial-c1}). 

In \S\ref{ss:MRC} we recall some properties of MRC fibrations and relate them to motivic invariants. 
The main results in this direction are Theorem 
\ref{thm-van3fold} 
and Proposition \ref{prop:RCfib}.

\ssec{Vanishing results}

For $i = 1,2$,
let $\pi_i\colon X_i \dto B_i$ be dominant
rational maps of
$\k$-varieties.

\begin{lemma}\label{lem:rel-1}
 Assume that
 $\pi_1$ and $\pi_2$
 are regular, proper, and generically 
 smooth of
 relative dimension at most one.
 We have $c_{\hor}(\phi) = 0$
 for any $\phi \in \Bir(\pi_1, \pi_2)$.
 \end{lemma}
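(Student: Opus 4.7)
The plan is to reduce $c_{\hor}(\phi)$ to the motivic invariant of the induced birational map on generic fibers, and observe that under the relative-dimension-at-most-one hypothesis this induced map is an isomorphism, forcing its motivic invariant to vanish.

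By Definition~\ref{def-chor1}, since $\pi_1$ and $\pi_2$ are regular (so that $U_i = X_i$ in the setup preceding the definition), the invariant $c_{\hor}(\phi)$ is the image under the forgetful map $\BURN_{n-1,d}(\k)\to\Burn_{n+d-1}(\k)$ of $c(\phi^\eta)$, where
\[
\phi^\eta\colon X_{1,\k(B_1)}/\k(B_1) \dashrightarrow X_{2,\k(B_2)}/\k(B_2)
\]
is the morphism in $\wt{\ul{\Bir}/\k}$ induced by restriction of $\phi$ to the generic fibers, via the bijection \eqref{eq:bijection-Mor-generic}. So it suffices to show $c(\phi^\eta)=0$, and by \eqref{eqn-genmotinv} this reduces to checking that $\phi^\eta$ has no exceptional divisors in either direction.

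Next I would analyze the generic fibers. Since $X_i$ is integral and $\pi_i$ is dominant with $B_i$ integral, $X_{i,\k(B_i)}$ is an irreducible scheme over $\k(B_i)$; since $\pi_i$ is generically smooth, the generic fiber is smooth over $\k(B_i)$, in particular reduced. Combined with the properness of $\pi_i$, this shows $X_{i,\k(B_i)}$ is a smooth proper integral variety over $\k(B_i)$ of dimension equal to the relative dimension of $\pi_i$, which is at most one. After composing with $\sigma\colon\k(B_1)\xrightarrow{\sim}\k(B_2)$, we may view $\phi^\eta$ as a birational map between two smooth proper integral varieties of dimension at most one over the single field $\k(B_2)$.

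The key point is then a standard fact: any birational map between smooth proper varieties of dimension at most one over a field is an isomorphism. In dimension zero this is trivial; in dimension one, $\phi^\eta$ and $(\phi^\eta)^{-1}$ extend to regular morphisms by the valuative criterion of properness applied at each (regular) codimension-one point, and they must then be mutually inverse isomorphisms. Consequently $\ExDiv(\phi^\eta)=\ExDiv((\phi^\eta)^{-1})=\emptyset$, so $c(\phi^\eta)=0$ by \eqref{eqn-genmotinv}, and hence $c_{\hor}(\phi)=0$. There is essentially no obstacle here; the whole statement is an unpacking of the definition once one observes that horizontal data is controlled entirely by the generic fiber, where dimension considerations force $\phi^\eta$ to be an isomorphism.
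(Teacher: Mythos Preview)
Your proof is correct and follows exactly the same approach as the paper: the paper's proof is the one-line observation that every birational map between points or smooth proper curves is an isomorphism, and your argument is simply a careful unpacking of why this observation, applied to the generic fibers via Definition~\ref{def-chor1}, gives $c_{\hor}(\phi)=0$.
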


 \begin{proof}
 It suffices to note that every birational map
 between points or smooth proper curves is an isomorphism. 
 \end{proof}

\begin{prop}[{Vanishing I}]
\label{prop:vanishingI} 
    Let $\pi\colon X \dto B$ be a dominant rational map between 
    $\k$-varieties. 
    Suppose that  
    $$\dim X - \dim B \le 2.$$
    If $\dim X - \dim B = 2$, assume in addition that $\k$ has characteristic zero.
    Then
    $$c_\hor(\phi) = 0 \text{
    for any $\phi \in \Bir(\pi)$}.$$
\end{prop}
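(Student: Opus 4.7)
The plan is to reduce, via birational modifications of $X$ and the rational Stein factorization, to a situation where the generic fiber $X_\eta$ is either a smooth projective curve over $\k(B)$ (relative dimension $1$) or a geometrically integral surface over $\k(B)$ (relative dimension $2$), and then to appeal respectively to elementary facts about curves or to Theorem~\ref{thm:intrinsic-centers}.

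First, I would arrange that $\pi\colon X \to B$ is a proper morphism with $X$ normal. This can be achieved by applying Nagata compactification over $B$ to the domain of regularity of $\pi$ and then normalizing. If $\alpha\colon X' \dto X$ denotes the resulting birational identification over $B$, then $\phi \in \Bir(\pi)$ corresponds via $\alpha$ to $\phi' := \alpha^{-1}\phi\alpha \in \Bir(\pi')$, and the additivity of $c_\hor$ (Lemma~\ref{lem:add}) gives $c_\hor(\phi') = c_\hor(\phi)$. Thus one may replace $\pi$ by $\pi'$, and from now on $X_\eta$ is a normal proper integral variety over $\k(B)$ of dimension equal to the relative dimension of $\pi$.

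For relative dimension $0$ there is nothing to prove. In relative dimension $1$, the generic fiber $X_\eta$ is automatically a smooth projective curve over $\k(B)$; since every birational map between smooth projective curves is an isomorphism, $\phi^\eta$ has no exceptional divisors and $c(\phi^\eta) = 0$. In relative dimension $2$, I would further invoke Corollary~\ref{cor-SFchor} to replace $\pi$ by the connected part $\wt{\pi}$ of its rational Stein factorization without changing $c_\hor(\phi)$; since $X$ is normal and $\pi$ is proper, Proposition~\ref{prop:Stein}(3) combined with $\chr(\k) = 0$ ensures that the geometric generic fiber of $\wt{\pi}$ is integral. Thus $X_\eta$ becomes a geometrically integral surface over $\k(\wt{B})$, and the last assertion of Theorem~\ref{thm:intrinsic-centers} yields $c(\phi^\eta) = 0$. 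In every case $c_\hor(\phi)$ is the image of $c(\phi^\eta) = 0$ under the forgetful map $\BURN_{n-1,d}(\k) \to \Burn_{n+d-1}(\k)$, hence vanishes.

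The main conceptual ingredient is Theorem~\ref{thm:intrinsic-centers}, which handles the surface case via the intrinsic-centers invariant $\cM(S/\F)$; the remaining care is bookkeeping to check that the reductions via Nagata compactification, normalization, and rational Stein factorization are all compatible with $c_\hor$, and this is precisely what Lemma~\ref{lem:add} and Corollary~\ref{cor-SFchor} are designed to provide. I expect the main obstacle to be making these reductions rigorously, in particular verifying that one is free to replace $X$ by any birational $B$-model without affecting the horizontal invariant.
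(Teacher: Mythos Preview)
Your proposal is correct and follows essentially the same route as the paper: replace $\pi$ by a proper morphism from a normal variety (the paper phrases this as ``a regular and proper replacement $\pi'\colon X'\to B$ with $X'$ normal''), handle relative dimension $\le 1$ by the fact that birational maps between regular proper curves are isomorphisms (the paper packages this as Lemma~\ref{lem:rel-1}), and in relative dimension $2$ pass to the connected part of the rational Stein factorization via Corollary~\ref{cor-SFchor} and Proposition~\ref{prop:Stein}(3) before invoking Theorem~\ref{thm:intrinsic-centers}. The only cosmetic difference is that you spell out the conjugation argument via Lemma~\ref{lem:add} explicitly, whereas the paper leaves this implicit.
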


\begin{proof}

As we did in the proof of Lemma~\ref{lem-bdStein},
we can find a regular and proper replacement $\pi'\colon X' \to B$ of $\pi$ with $X'$ normal.
If $\dim(X) - \dim(B) \le 1$, we use Lemma \ref{lem:rel-1}.

If $\dim(X) - \dim(B) = 2$ and $\chr(\k) = 0$,
then
by Corollary~\ref{cor-SFchor} and Proposition~\ref{prop:Stein}(3), we can assume that
the geometric generic fiber of $\pi'$ is integral so that
the result
follows from Theorem \ref{thm:intrinsic-centers}.
\end{proof}

Our next goal is to 
state a general result for computing vertical motivic invariants assuming that $\pi_1$, $\pi_2$
are 
regular and flat, see Proposition \ref{pro:cvercons}.
First let us explain a simple formula for the usual motivic invariant $c(\phi)$ in terms of valuations on function fields.
For a normal variety $X$, we denote by $X^{(1)}$
the set of prime Weil divisors on $X$.
Every $\xi \in X^{(1)}$ defines a discrete valuation on $\k(X)$, but not every discrete valuation is of this form. Discrete valuations on $\k(X)$
arising from a divisor on a normal birational model of $X$ are called \emph{algebraic} \cite[Remark 2.23]{KollarMori}
and they admit a simple intrinsic characterization given in \cite[Lemma 2.45]{KollarMori}.

For a birational map $\phi\colon X_1 \dto X_2$,
between normal varieties,
both $X_1^{(1)}$ and $X_2^{(1)}$
can be considered as subsets of algebraic discrete valuations on the function field $\k(X_1) \simeq \k(X_2)$, identified via $\phi$.
For every algebraic discrete valuation $\xi$,
let us denote by $\ol{\xi}^{X_i}$ its center in $X_i$, that is the closure of the image of the generic point of the corresponding divisor.
In these terms, 
we have
\[
\ExDiv(\phi) = X_1^{(1)} \setminus X_2^{(1)},
\quad
\ExDiv(\phi^{-1}) = X_2^{(1)} \setminus X_1^{(1)}.
\]
Furthermore, for all $\xi \in X_1^{(1)} \cap X_2^{(1)}$,
divisors $\ol{\xi}^{X_1}$ and $\ol{\xi}^{X_2}$ are birational.
Hence 
by definition
\begin{equation}\label{eq:c-valuations-def}
c(\phi\colon X_1 \dto X_2) = 
\sum_{\xi \in X_1^{(1)} \cup X_2^{(1)}}
\left([\ol{\xi}^{X_2}] - [\ol{\xi}^{X_1}] \right),
\end{equation}
where $[\ol{\xi}^{X_i}]$ is zero if 
$\codim(\ol{\xi}^{X_i}) > 1$.
This is a finite sum because only divisors from the union $\ExDiv(\phi) \cup \ExDiv(\phi^{-1})$ can make nontrivial contributions. 
The dependence of the right-hand side of~\eqref{eq:c-valuations-def} on $\phi$ is encoded in the union $X_1^{(1)} \cup X_2^{(1)}$.

We have the following generalization of~\eqref{eq:c-valuations-def}.

\begin{prop}\label{pro:cvercons}

Let $\k$ be any field.
Let
$\pi_1\colon X_1 \to B_1$ and $\pi_2\colon X_2\to B_2$
be flat morphisms
between normal varieties
and $\phi \in \Bir(\pi_1, \pi_2)$.
We regard $B_1^{(1)}$ and $B_2^{(1)}$
 as subsets of valuations in the function fields $\k(B_1) \simeq \k(B_2)$,
identified through $\phi$.
We have
\[
c_\ver(\phi) = \sum_{\zeta \in B_1^{(1)} \cup B_2^{(1)}}
\left([\pi_2^{-1}(\ol{\zeta}^{B_2})] - [\pi_1^{-1}(\ol{\zeta}^{B_1})] \right),
\]
which is a finite sum.
Here 
$[\pi_i^{-1}(\ol{\zeta}^{B_i})]$ denotes the sum of the prime Weil divisors on $X_i$ in $\pi_i^{-1}(\ol{\zeta}^{B_i})$.
\end{prop}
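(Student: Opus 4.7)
The plan is to start from the explicit description of $c_\ver(\phi)$ in Lemma \ref{lem-chorvar} and reorganize the resulting sum by stratifying vertical prime divisors according to their image on the base.

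The main geometric input, which I would establish first, is a consequence of flatness: if $\pi\colon X \to B$ is a flat morphism of normal varieties and $D \subset X$ is a prime Weil divisor, then either $D$ dominates $B$, or $\ol{\pi(D)}$ is itself a prime divisor on $B$. Indeed, setting $Z = \ol{\pi(D)}$, flatness of $\pi$ yields $\codim_X(\pi^{-1}(Z)) = \codim_B(Z)$, and since $D \subseteq \pi^{-1}(Z)$ has codimension $1$ in $X$, we deduce $\codim_B(Z) \le 1$. Consequently, every vertical prime divisor on $X_i$ is a prime component of $\pi_i^{-1}(\ol{\zeta}^{B_i})$ for a unique $\zeta \in B_i^{(1)}$, and conversely every prime component of $\pi_i^{-1}(\ol{\zeta}^{B_i})$ is vertical over $\zeta$.

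Next I would examine how $\phi$, viewed as identifying divisorial valuations on the common function field $\k(X_1) \simeq \k(X_2)$, interacts with this stratification. Suppose $\zeta \in B_1^{(1)} \cap B_2^{(1)}$ and $D \in X_1^{(1)}$ is vertical over $\zeta$ and not $\phi$-exceptional. The corresponding prime divisor on $X_2$ has the same divisorial valuation, which restricts to $\zeta$ on the common base field; by the flatness observation applied to $\pi_2$, it is vertical and its image is precisely $\ol{\zeta}^{B_2}$. On the other hand, if $\zeta \in B_1^{(1)} \setminus B_2^{(1)}$, then every vertical prime divisor of $X_1$ over $\zeta$ must be $\phi$-exceptional, since otherwise the same argument would produce a prime divisor on $B_2$ with valuation $\zeta$, contradicting $\zeta \notin B_2^{(1)}$. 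The symmetric statements with the roles of $X_1$ and $X_2$ reversed follow identically.

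Combining these observations, I would fix $\zeta \in B_1^{(1)} \cup B_2^{(1)}$ and expand $[\pi_2^{-1}(\ol{\zeta}^{B_2})] - [\pi_1^{-1}(\ol{\zeta}^{B_1})]$ as a difference of sums over prime components. Non-exceptional vertical divisors over $\zeta$ contribute equal birational classes in $\Burn_*(\k)$ to both sides and cancel, while the remaining terms are exactly the signed sum of the $\phi^{-1}$- and $\phi$-exceptional vertical divisors over $\zeta$. Summing over $\zeta$ then recovers the expression for $c_\ver(\phi)$ from Lemma \ref{lem-chorvar}, and simultaneously shows finiteness since $\ExDiv(\phi) \cup \ExDiv(\phi^{-1})$ is finite. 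The main technical point will be the careful bookkeeping in the case analysis above; all of it rests on the flatness claim, which cleanly rules out vertical prime divisors mapping to higher codimension subvarieties of the base.
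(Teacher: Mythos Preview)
Your proposal is correct and follows essentially the same route as the paper: use flatness to see that vertical prime divisors on $X_i$ map to prime divisors on $B_i$, then regroup the sum defining $c_\ver(\phi)$ according to the image $\zeta \in B_1^{(1)} \cup B_2^{(1)}$. The paper streamlines your case analysis by first recasting Lemma~\ref{lem-chorvar} in the valuation-theoretic form \eqref{eq:c-ver-valuations}, where non-exceptional divisors contribute $[\ol{\xi}^{X_2}] - [\ol{\xi}^{X_1}] = 0$ automatically, so the cancellation you carry out by hand is already built into the notation.
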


As the proof shows, instead of flatness it suffices to assume in Proposition~\ref{pro:cvercons} that $\pi_1$ and $\pi_2$ do not map divisors to subsets of codimension at least two. 

\begin{proof}
By Lemma~\ref{lem-chorvar},
the vertical invariant is equal to the sum analogous to \eqref{eq:c-valuations-def}:
\begin{equation}\label{eq:c-ver-valuations}
c_\ver(\phi \colon X_1 \dto X_2) = 
\sum_{\xi \in X_{1,\ver}^{(1)} \cup X_{2,\ver}^{(1)}}
\left([\ol{\xi}^{X_2}] - [\ol{\xi}^{X_1}] \right),
\end{equation}
where 
$$X_{i,\ver}^{(1)} \cnec \Set{\xi \in X_i^{(1)} | \ol{\pi_i(\ol{\xi}^{X_i})} \ne B_i}.$$
As $\pi_i$ are flat, the closure of the image of a vertical prime divisor must be a prime divisor, so we have well-defined maps
$X_{i,\ver}^{(1)} \to B_i^{(1)}$ which agree on the intersections and define a map
\begin{equation}
X_{1,\ver}^{(1)} \cup X_{2,\ver}^{(1)} 
\xrightarrow{\pi^{(1)}} B_1^{(1)} \cup B_2^{(2)}.
\end{equation}
Splitting the sum \eqref{eq:c-ver-valuations}
over the fibers of $\pi^{(1)}$ 
gives
\begin{equation*}
c_\ver(\phi) = 
\sum_{\zeta \in B_1^{(1)} \cup B_2^{(1)}} \sum_{\pi^{(1)}(\xi) = \zeta}
\left([\ol{\xi}^{X_2}] - [\ol{\xi}^{X_1}] \right),
\end{equation*}
which
implies the result.
 \end{proof}

\begin{cor}[{Vanishing II}]
\label{cor:vanishingII}
For every  rational dominant map 
$\pi\colon X \dto B$, we have
$$c_\ver(\phi) = 0
\text{ for all
$\phi \in \Bir(X/B)$.}
$$
\end{cor}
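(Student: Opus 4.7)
The plan is to apply Proposition~\ref{pro:cvercons} after reducing to the case where $\pi$ is a flat morphism between normal varieties. The key observation will be that when $\phi \in \Bir(X/B)$, the induced birational map on $B$ is the identity, so the centers of each valuation $\zeta \in B^{(1)}$ in the source and target automatically coincide, and every term in the formula of Proposition~\ref{pro:cvercons} cancels pairwise.

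First, I would use formula~\eqref{eq:phi-phiU} to replace $X$ and $B$ by suitable dense open subsets $U \subseteq X$ and $V \subseteq B$ so that $\pi|_U \colon U \to V$ is a flat morphism between normal varieties. This is achieved by passing to the loci where $\pi$ is regular and where $X$, $B$ are smooth, and then shrinking $V$ further via generic flatness. Since $\phi$ is over $\id_B$, the restricted map $\phi^U_V$ lies in $\Bir(U/V)$, and \eqref{eq:phi-phiU} gives $c_\ver(\phi) = c_\ver(\phi^U_V)$, so it suffices to prove the vanishing for $\phi^U_V$.

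Next, I would apply Proposition~\ref{pro:cvercons} directly to $\phi^U_V$. Because $\phi^U_V$ acts as the identity on $V$, the two copies of the index set $V^{(1)}$ are literally the same subset of valuations on $\k(V)$, and for each $\zeta \in V^{(1)}$ the two centers $\ol{\zeta}^{V}$ agree. Hence each summand $[\pi^{-1}(\ol{\zeta}^{V})] - [\pi^{-1}(\ol{\zeta}^{V})]$ is zero, yielding $c_\ver(\phi^U_V) = 0$ and therefore $c_\ver(\phi) = 0$.

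There is no substantial obstacle: the only nontrivial point is arranging simultaneously that $\pi|_U$ is flat with normal source and target while $\phi^U_V$ remains over the identity on $V$, which is routine from \eqref{eq:phi-phiU} and generic flatness. In fact this vanishing is essentially the simplest consequence of the general formula in Proposition~\ref{pro:cvercons}.
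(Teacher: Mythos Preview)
Your proposal is correct and follows essentially the same approach as the paper: reduce via \eqref{eq:phi-phiU} and generic flatness to the case where $\pi$ is a flat morphism between normal varieties, then apply Proposition~\ref{pro:cvercons} and observe that since $\sigma = \id_B$ the two index sets coincide and each summand cancels. The paper's proof is organized in the reverse order (flat normal case first, then the reduction), but the content is identical.
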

\begin{proof} 

First assume that $\pi$ is a regular flat morphism between normal varieties.
The vanishing of $c_\ver(\phi)$ in this case
follows immediately from Proposition \ref{pro:cvercons}, because all the terms in the sum are zero.

In general, by generic flatness, there exists a
dense Zariski open $U \subset X$ such that
$\pi$ restricts to a regular flat morphism onto its image $V \subset B$.
Furthermore we can assume that $U$ and 
$V$ are normal. 
By~\eqref{eq:phi-phiU} we can replace $\pi$ and $\phi$ by $\pi^U_V$ and $\phi^U_V$ respectively and reduce to the special case explained above.
\end{proof}

Now we concentrate on  a special kind of morphisms.
 
\begin{Def}\label{def:bir-trivial-c1}
    A flat morphism $\pi\colon X \to B$ between normal varieties is called \emph{birationally trivial in codimension $1$}
    with fiber $F/\k$ if 
    for all but finitely many prime divisors $D$
    of $B$, 
     $\pi^{-1}(D)$ is irreducible and birational to $D \times F$
    over $D$.
\end{Def}

Note that since $\pi\colon X \to B$ is assumed to be flat, for every $\zeta \in B^{(1)}$ with $D = \ol{\zeta} \subset B$ and $X_\zeta = \pi^{-1}(\zeta)$ we have
$\ol{X_\zeta} = \pi^{-1}(D)$ and the condition of birational triviality in codimension $1$ can be equivalently restated as birationality between $\k(\zeta)$-varieties $X_{\zeta}$ and $F \times_{\k} {\k(\zeta)}$ for all but finitely many $\zeta$.

If $\k$ is algebraically closed, then $F$ must be birational to fibers of $\pi$ over general closed points of $B$.
We have the following examples for this notion;
we assume that $X$ and $B$ are normal.

\begin{example}\label{ex:gener-trivial}
If $X$ is birational to $F \times B$ over $B$, for example if $X$ is a Zariski locally trivial fiber bundle, then $\pi$ is birationally trivial in codimension $1$.    
This also applies if the generic fiber $X_{\k(B)}$ is rational over $\k(B)$.
\end{example}

\begin{example}\label{ex:bir-triv-curve} If $B$ is a curve over an algebraically closed field $\k$
and $\pi\colon X \to B$ is a flat morphism with rational general fibers, then 
$\pi$ is birationally trivial in codimension $1$. Note that this is not a particular case
of Example \ref{ex:gener-trivial}
since $X_{\k(B)}$ can still be irrational.
\end{example}

\begin{example}\label{ex:bir-triv-surf}
If $B$ is a surface over an algebraically closed field $\k$
and $\pi\colon X \to B$ 
is a flat morphism whose
generic fiber of  is a Severi--Brauer variety of dimension $n$, then $\pi$ is birationally trivial in codimension $1$ with fiber $\P^n$.
Indeed, there exists a Zariski open subset $U \subset B$
such that the restriction $\pi^{-1}(U) \to U$
is a Severi--Brauer fibration.
Thus restricting $\pi$ to every integral curve $D \subset U$ is a Severi--Brauer fibration which 
is
trivial over $\k(D)$ by Tsen's theorem.
\end{example}

\begin{example} The concept of birational triviality in codimension one depends on the base field.
For example,
$\{x^2 +y^2 = t\} \subset \A^2_{x,y} \times \A^1_t 
\to \A^1_t$ is 
birationally trivial in codimension $1$ over $\C$,
but not over $\R$.    
\end{example}

If $\pi\colon X \to B$ is birationally trivial in codimension one, we define its \emph{vertical divisorial defect} by the formula
\[
d(\pi) = \sum_{D \in B^{(1)}} [\pi^{-1}(D)] - [F \times D] \in \Burn_*(\k).
\]
Here, $[\pi^{-1}(D)]$ is the sum of 
the classes of prime Weil divisors in $\pi^{-1}(D)$ (without multiplicities).
Note that this sum is finite by our assumption on $\pi$. 

\begin{theorem}\label{thm:c-ver-bir-triv}
Let
$\pi_1$ and $\pi_2$
be birationally trivial morphisms in codimension $1$, with the same fiber $F$.
Then for all $\phi \in \Bir(\pi_1, \pi_2)$,
\begin{equation}
\label{eq:c-gentriv}
c_\ver(\phi) = 
d(\pi_2) - d(\pi_1)
+  c(\sigma) \cdot [F]. 
\end{equation}
\end{theorem}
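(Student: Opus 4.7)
The plan is to derive the formula directly from Proposition~\ref{pro:cvercons} by decomposing the finite sum it produces according to where each valuation $\zeta$ comes from. Since $\pi_1$ and $\pi_2$ are flat morphisms between normal varieties, Proposition~\ref{pro:cvercons} gives
\[
c_\ver(\phi) = \sum_{\zeta \in B_1^{(1)} \cup B_2^{(1)}} \left([\pi_2^{-1}(D_2)] - [\pi_1^{-1}(D_1)]\right),
\]
where $D_i = \ol{\zeta}^{B_i}$, with the convention that when $\zeta \notin B_i^{(1)}$ the center $D_i$ has codimension $\geq 2$, so by flatness its preimage contributes no prime Weil divisor.

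Next, for each $i$ and each $\zeta \in B_i^{(1)}$, I would introduce the defect
\[
\delta_i(\zeta) \cnec [\pi_i^{-1}(D_i)] - [F]\cdot[D_i] \in \Burn_*(\k),
\]
which vanishes for all but finitely many $\zeta$ by the birational triviality assumption on $\pi_i$. By the very definition of the vertical divisorial defect, $\sum_{\zeta \in B_i^{(1)}} \delta_i(\zeta) = d(\pi_i)$.

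I would then split the main sum into three pieces: $\zeta \in B_1^{(1)} \cap B_2^{(1)}$, $\zeta \in B_2^{(1)} \setminus B_1^{(1)} = \ExDiv(\sigma^{-1})$, and $\zeta \in B_1^{(1)} \setminus B_2^{(1)} = \ExDiv(\sigma)$. On the common part, $D_1$ and $D_2$ share the residue field $\kappa(\zeta)$ and are therefore birational, so $[F]\cdot[D_1] = [F]\cdot[D_2]$ in $\Burn_*(\k)$, and the difference collapses to $\delta_2(\zeta) - \delta_1(\zeta)$. On each exceptional part, one of the two preimage classes is $0$ and the other equals $[F]\cdot[D_i] + \delta_i(\zeta)$.

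Adding the contributions, the $\delta$-terms sum to $\sum_{\zeta \in B_2^{(1)}} \delta_2(\zeta) - \sum_{\zeta \in B_1^{(1)}} \delta_1(\zeta) = d(\pi_2) - d(\pi_1)$, while the $[F]\cdot[D_i]$ terms on the exceptional loci combine via the definition~\eqref{eqn:c-intro} into
\[
[F] \cdot \left(\sum_{\zeta \in \ExDiv(\sigma^{-1})} [D_2] \ \ - \sum_{\zeta \in \ExDiv(\sigma)} [D_1]\right) = [F] \cdot c(\sigma),
\]
yielding \eqref{eq:c-gentriv}. There is no substantial obstacle here; the main care is bookkeeping, namely ensuring finiteness of all sums in play, the identification $[F\times D_1] = [F\times D_2]$ for valuations common to both bases, and that the flatness assumption really does force the preimage of a codimension $\geq 2$ center to carry no divisorial class.
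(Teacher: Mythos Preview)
Your argument is correct. It differs from the paper's proof in execution, though both rest on Proposition~\ref{pro:cvercons}. The paper factors $\phi$ as $j_2 \circ \psi \circ j_1^{-1}$ through open embeddings $j_i\colon \wt{U_i} \hookrightarrow X_i$ lying over opens $U_i \subset B_i$ chosen so that (i) every divisor meeting $U_i$ has trivial preimage and (ii) $\sigma|_{U_1}\colon U_1 \to U_2$ is an isomorphism; then it computes $c_\ver(j_i)$ directly, shows $c_\ver(\psi) = 0$ via Proposition~\ref{pro:cvercons}, and invokes additivity of $c_\ver$. Your route instead writes out the full sum from Proposition~\ref{pro:cvercons} once and decomposes it valuation by valuation using the defect terms $\delta_i(\zeta)$. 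Your approach is arguably more direct and avoids the auxiliary choice of $U_i$; the paper's approach makes the role of additivity (Lemma~\ref{lem:add}) more visible and packages the ``bad'' divisors into the open complement rather than tracking them individually. Both ultimately hinge on the same two facts: that $[D_1] = [D_2]$ for $\zeta \in B_1^{(1)} \cap B_2^{(1)}$, and that flatness kills the preimage class over a codimension $\ge 2$ center.
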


\begin{proof}
    We can find a factorization of $\phi \in \Bir(\pi_1, \pi_2)$
    of the form
    $$
    \xymatrix{
X_1 \ar[d]_{\pi_1} & \wt{U_1} \ar@{_{(}->}[l]_{j_1} \ar@{-->}[r]^{\psi} \ar[d] & \wt{U_2} \ar[d]\ar@{^{(}->}[r]^{j_2}   & X_2 \ar[d]^{\pi_2} \\
B_1 & U_1 \ar@{_{(}->}[l] \ar[r]^{\sim} & U_2 \ar@{^{(}->}[r] & B_2 \\
}
    $$
 such that
\begin{enumerate}
    \item for $i = 1,2$ and every prime divisor 
    $D_i \subset B_i$ such that $D_i \cap U_i \ne \emptyset$,
    the preimage $\pi_i^{-1}(D_i)$ 
    is birational to $D_i \times F$;
    \item the outer squares are cartesian.
\end{enumerate}
%Such a factorization exists by assumption. 
By construction of the vertical invariant (see Lemma \ref{lem-chorvar}), we obtain using (1):
$$c_\ver(j_i) 
= \sum_{D_i \in B_i^{(1)} \bss U_i^{(1)}} [\pi_i^{-1}(D_i)]
= d(\pi_i) + \sum_{D_i \in B_i^{(1)} \bss U_i^{(1)}} [D_i] \cdot [F].
$$
Since $\wt{U_i} \to U_i$ are flat, Proposition~\ref{pro:cvercons}
and (1) imply that $c_\ver(\psi) = 0$.
Hence
$$c_\ver(\phi) = - c_\ver(j_1) + c_\ver(\psi) + c_\ver(j_2)
= d(\pi_2) - d(\pi_1) + c(\sigma) \cdot [F].$$
\end{proof}

We record the following
immediate consequence of Theorem~\ref{thm:c-ver-bir-triv}.

\begin{cor}[{Vanishing III}] \label{cor:vanishingIII}
If $\pi\colon X \to B$ is birationally trivial in codimension one 
and %with $\dim(B) \le 2$,
$c(\Bir(B)) = 0$,
then $c_\ver(\phi) = 0$
for all $\phi \in \Bir(\pi)$.
\end{cor}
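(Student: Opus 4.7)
The plan is to apply Theorem \ref{thm:c-ver-bir-triv} directly in the case $\pi_1 = \pi_2 = \pi$. Given $\phi \in \Bir(\pi)$, let $\sigma \in \Bir(B)$ denote the induced birational map on the base as in the square \eqref{eqn-squareRat}. Since $\pi$ is birationally trivial in codimension one with some fiber $F/\k$, the theorem applies and yields
\[
c_\ver(\phi) = d(\pi) - d(\pi) + c(\sigma) \cdot [F] = c(\sigma) \cdot [F].
\]
Thus the vertical divisorial defect terms cancel automatically because we are using the same morphism on both sides.

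It remains only to invoke the hypothesis $c(\Bir(B)) = 0$, which forces $c(\sigma) = 0$ and hence $c_\ver(\phi) = 0 \cdot [F] = 0$ in $\Burn_*(\k)$. Since $\sigma$ is determined by $\phi$ and ranges over (a subset of) $\Bir(B)$ as $\phi$ varies in $\Bir(\pi)$, this completes the argument for every $\phi$.

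There is essentially no obstacle: the entire content is absorbed into Theorem \ref{thm:c-ver-bir-triv}, and the role of the hypothesis $c(\Bir(B)) = 0$ is to kill the one remaining term $c(\sigma) \cdot [F]$ after the defect cancellation. The only thing worth flagging is that we are implicitly using the well-definedness of $\sigma \mapsto c(\sigma)$ on $\Bir(B)$ and the fact that multiplication by $[F]$ in $\Burn_*(\k)$ preserves the zero element, both of which are immediate from the ring structure on $\Burn_*(\k)$ recalled in \S\ref{ss:relative}.
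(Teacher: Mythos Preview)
Your proof is correct and matches the paper's approach exactly: the paper simply states that Corollary~\ref{cor:vanishingIII} is an immediate consequence of Theorem~\ref{thm:c-ver-bir-triv}, and your argument spells out precisely this immediate consequence by taking $\pi_1 = \pi_2 = \pi$ so that the defect terms cancel and the remaining term $c(\sigma)\cdot[F]$ vanishes by hypothesis.
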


The condition $c(\Bir(B)) = 0$ always holds when $\dim(B) = 1$
and when $\dim(B) = 2$ if $\k$
is a perfect field by the main result of
\cite{LSZ20}.

\subsection{MRC fibrations}
\label{ss:MRC}

Let us assume that $\k$ is of characteristic zero.
We will say that a variety $X$ is rationally connected if it has a completion $\ol{X}$ 
such that $\ol{X}_{\ol{\k}}$ is rationally connected in the usual sense \cite[Definition IV.3.2]{Kollarrat}. 
When $X$ is not proper 
this is different from the definition given in \cite{Kollarrat}, but it gives us a birational property 
more convenient for our purposes.
Note that by definition a rationally connected variety is always geometrically irreducible.

For any
variety $X$, there exists a rational dominant map $\pi\colon X \dashrightarrow B$
called the maximal rationally connected (MRC) fibration \cite[IV.5]{Kollarrat}, due to Campana and Koll\'ar--Miyoka--Mori which is constructed as follows. 
Fix a compactification $\ol{X}$ of $X$.
Over the algebraic closure $\ol{\k}$, $B_{\ol{\k}}$ is the quotient, in the sense of Campana, of $\ol{X}(\ol{\k})$ by the equivalence relation generated by
$x \sim y$ if there is a rational curve passing through $x$ and $y$.
Then $X_{\ol{\k}} \dto B_{\ol{\k}}$ canonically descends to the field $\k$. 
Up to birational modifications, the MRC fibration is unique.

By the main result of Graber--Harris--Starr \cite{GHS},
the MRC fibration is characterized by the following two
properties 
\begin{enumerate}
    \item the generic fiber of $\pi$ is rationally connected; 
    \item $B$ is not uniruled.
\end{enumerate}
Note that both rational connectedness and uniruledness can be checked over the algebraic closure of $\k$~\cite[Remarks 4.2 and 4.22]{Debarre}.
Hence the MRC fibration is also preserved under field extensions, if we define MRC fibrations for reducible reduced schemes of finite type by taking disjoint union of the MRC fibrations of their irreducible components.

\begin{example}\label{ex:P1-C-R}
Let $X$ be a real curve defined as the restriction of scalars:
\[
X = \P^1_\C \to \Spec(\C) \to \Spec(\R).
\]
We claim that the MRC fibration of $X/\R$ is the morphism $X \to \Spec(\C)$. Indeed the only fiber $\P^1_\C$ is rationally connected and the base is not uniruled. Extending scalars we get the MRC fibration of a disjoint union of 
$\C$-varieties
\[
X_\C = \P^1_\C \sqcup \P^1_\C \to \Spec(\C) \sqcup \Spec(\C)
\]
\end{example}

Any birational map $X \dto X'$
induces
a birational map between the bases of the MRC fibrations
\cite[Theorem IV.5.5]{Kollarrat}, in particular we have
\begin{equation}\label{eq:BirX-MRC}
\Bir(X) = \Bir(\pi).
\end{equation}

If $X \dto B$ is an MRC fibration,
we define the \emph{rationally connected dimension} of $X$ to be 
\[
\RCdim(X) = \dim(X) - \dim(B).
\]
In other words, $\RCdim(X)$
is the maximal integer $d$ such that
a general point $x \in X(\ol{\k})$ is contained in a $d$-dimensional rationally connected subvariety of $X_{\ol{\k}}$. 
If $X$ is rationally connected, then $\RCdim(X) = \dim(X)$ and the converse holds when $X$ is geometrically integral; Example \ref{ex:P1-C-R} shows why this is a necessary assumption.

\begin{lemma}\label{lem:RC-fibers-MRC}
If $\psi\colon Y \to X$ is a dominant morphism with rationally connected generic fiber, and $\pi\colon X \dto B$ is the MRC fibration of $X$, then
$\pi \circ \psi$ is the MRC fibration of $Y$.
\end{lemma}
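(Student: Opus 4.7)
The plan is to verify that $\pi \circ \psi \colon Y \dto B$ satisfies the two characterizing properties of the MRC fibration recalled just before the statement, namely that its generic fiber is rationally connected and that its base is not uniruled. The second property is immediate: $B$ is already the base of the MRC fibration of $X$, so it is not uniruled. Note also that $\pi \circ \psi$ is dominant since both $\pi$ and $\psi$ are.

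The main step is therefore to establish rational connectedness of the generic fiber of $\pi \circ \psi$. Let $\eta \in B$ denote the generic point. Then the generic fiber $Y_\eta$ of $\pi \circ \psi$ comes equipped with a dominant $\k(B)$-morphism $\psi_\eta \colon Y_\eta \to X_\eta$, where $X_\eta$ is the generic fiber of $\pi$. By construction, the generic fiber of $\psi_\eta$ coincides with the generic fiber of $\psi$, which is rationally connected by hypothesis; and $X_\eta$ is rationally connected since $\pi$ is the MRC fibration of $X$. Both rational connectedness properties persist after base change to $\ol{\k(B)}$ (this is indeed the definition adopted in the paper).

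I would then invoke the Graber--Harris--Starr theorem, applied over $\ol{\k(B)}$, to smooth projective models of $\psi_\eta$: a proper dominant morphism from a smooth projective variety to a rationally connected base, whose general fiber is rationally connected, has rationally connected total space. This yields rational connectedness of $Y_\eta$, finishing the verification of the characterization and showing that $\pi \circ \psi$ is (a representative of) the MRC fibration of $Y$.

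The main obstacle is the bookkeeping around the paper's conventions, in which rational connectedness is defined via a chosen compactification rather than in the projective setting used by Graber--Harris--Starr. To make this transparent, one chooses proper smooth models of $Y_\eta$ and $X_\eta$ over $\ol{\k(B)}$, together with a morphism between them birational to $\psi_\eta$ (by resolving indeterminacies); since rational connectedness is a birational invariant, this legitimately reduces the application of Graber--Harris--Starr to the standard proper smooth setting.
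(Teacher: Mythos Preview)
Your proof is correct and follows essentially the same approach as the paper: both arguments verify the two characterizing properties of the MRC fibration, note that non-uniruledness of $B$ is immediate, and use the Graber--Harris--Starr corollary (a dominant morphism with rationally connected target and rationally connected general fiber has rationally connected source) to handle the generic fiber. The only cosmetic difference is that the paper first reduces to $\ol{\k} = \k$ and isolates the base case $B = \Spec(\k)$ before passing to $\k(B)$, whereas you go directly to $\ol{\k(B)}$; the mathematical content is the same.
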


\begin{proof}
Since MRC fibrations are descended from the algebraic closure, we can assume that $\ol{\k}  = \k$. Let us show that the composition $\pi \circ \psi\colon Y \dto B$ satisfies conditions (1), (2) characterizing MRC fibrations.
First assume that $B = \Spec(\k)$; in this case $X$ is rationally connected, therefore $Y$ is also rationally connected by \cite[Corollary 1.3]{GHS}. Thus $Y \to \Spec(\k)$ is an MRC fibration.

In general, since $B$ is not uniruled (because $X \dto B$ is an MRC fibration), it suffices to check that the generic fiber of $Y \dto B$ is rationally connected. This follows by passing to $\Spec(\k(B))$ and using the special case considered above.
\end{proof}

\begin{theorem}\label{thm-van3fold}
If $X$ is any threefold over an algebraically closed field $\k$ of characteristic zero, then $c$ is identically zero on $\Bir(X)$. 
\end{theorem}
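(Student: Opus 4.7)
The plan is to exploit the MRC fibration $\pi\colon X \dashrightarrow B$ of $X$, which gives $\Bir(X) = \Bir(\pi)$ by \eqref{eq:BirX-MRC}, and to argue case-by-case according to $d := \dim B \in \{0,1,2,3\}$. After replacing $B$ by a smooth projective model and $\pi$ by a flat regular proper model (possible by generic flatness, with invariants unchanged by \eqref{eq:phi-phiU}), the two main tools are Vanishing I (Proposition \ref{prop:vanishingI}) for the horizontal invariant and Vanishing III (Corollary \ref{cor:vanishingIII}) for the vertical invariant, combined via the decomposition $c(\phi) = c_\hor(\phi) + c_\ver(\phi)$.

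The cases $d = 0$ and $d = 3$ are essentially known. When $d = 0$, the threefold $X$ is rationally connected and the vanishing is \cite[Proposition 2.6]{BirMot}. When $d = 3$, $X$ is birational to the non-uniruled threefold $B$, hence itself non-uniruled. By Mori's theorem a minimal model $\alpha\colon X \dashrightarrow X_{\min}$ with $K_{X_{\min}}$ nef exists. Additivity of $c$ (Lemma \ref{lem:new-c-additive}) together with the elementary identity $c(\alpha^{-1}) + c(\alpha) = c(\id) = 0$ yields $c(\alpha \phi \alpha^{-1}) = c(\phi)$ for every $\phi \in \Bir(X)$, reducing the problem to $\Bir(X_{\min})$. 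By Kawamata's theorem that any two minimal models in characteristic zero are connected by a sequence of flops, every element of $\Bir(X_{\min})$ is an isomorphism in codimension one, so $\ExDiv(\phi_{\min}) = \ExDiv(\phi_{\min}^{-1}) = \emptyset$ and $c(\phi_{\min}) = 0$.

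The genuinely new content lies in the cases $d = 1$ and $d = 2$. In case $d = 1$, the base $B$ is a smooth projective curve of positive genus (being non-uniruled) and the generic fiber of $\pi$ is a rationally connected, hence geometrically rational, surface. Vanishing I yields $c_\hor(\phi) = 0$, where characteristic zero is required because the relative dimension is two. Since $\k$ is algebraically closed, general closed fibers are rational surfaces birational to $\P^2$, so Example \ref{ex:bir-triv-curve} applies and $\pi$ is birationally trivial in codimension one with fiber $\P^2$; as $c(\Bir(B)) = 0$ trivially on a curve, Vanishing III gives $c_\ver(\phi) = 0$. In case $d = 2$, $B$ is a non-uniruled surface and the generic fiber is a conic over $\k(B)$. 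Vanishing I again gives $c_\hor(\phi) = 0$ (now in relative dimension one). Example \ref{ex:bir-triv-surf} provides birational triviality in codimension one with fiber $\P^1$, and $c(\Bir(B)) = 0$ by the main result of \cite{LSZ20} for surfaces over perfect fields; Vanishing III then yields $c_\ver(\phi) = 0$.

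The main obstacle is verifying the hypotheses of Vanishing III in the intermediate cases: birational triviality of the MRC fibration in codimension one (using algebraic closedness of $\k$ to ensure geometrically rational fibers are actually rational, and, for $d = 2$, Tsen's theorem to trivialize conic bundles over function fields of curves in $B$), together with $c(\Bir(B)) = 0$ for the base, which invokes \cite{LSZ20} as a black box in the surface case. The $d = 3$ case also relies on the nontrivial MMP fact that birational self-maps of a three-dimensional minimal model are pseudo-automorphisms.
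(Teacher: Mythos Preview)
Your proof is correct and follows essentially the same approach as the paper's: case analysis on the MRC base dimension, with the $d=0$ case handled by \cite[Proposition~2.6]{BirMot}, the $d=1,2$ cases by Vanishing~I for $c_\hor$ together with Examples~\ref{ex:bir-triv-curve}, \ref{ex:bir-triv-surf} and Vanishing~III for $c_\ver$, and the $d=3$ case by passing to a minimal model where birational self-maps are isomorphisms in codimension one. The only cosmetic differences are that the paper invokes Lemma~\ref{lem:rel-1} directly in the relative-dimension-one case (rather than Vanishing~I), and for $d=3$ cites \cite[Corollary~3.54]{KollarMori} rather than Kawamata's flop decomposition --- the latter is stronger than what you need, but the conclusion is the same.
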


\begin{proof}
We use the MRC fibration $\pi \colon X \to B$, which we can
assume to be a smooth projective morphism,
with smooth $X$ and $B$.
General fibers of $\pi$ are rationally connected.

The proof relies on \eqref{eq:BirX-MRC}.
We have the following four possibilities for $\pi$, depending on the rationally connected dimension of $X$:

\begin{itemize}
    \item $\RCdim(X) = 3$ and $\dim B = 0$:
    Then $X$ is rationally connected.
    The result holds by 
    the last claim in \cite[Proposition 2.6]{BirMot}.

    \item $\RCdim(X) = 2$ and $B$ is a curve, and $\pi$ has relative dimension two. We have $c_\hor(\phi) = 0$ by Vanishing I
    (Proposition \ref{prop:vanishingI}).
    Since general fibers of $\pi$ are smooth rationally connected surfaces, they are rational varieties. Thus
    $\pi$ is birationally trivial in codimension $1$ by Example \ref{ex:bir-triv-curve} and $c_\ver(\phi) = 0$ by Vanishing III (Corollary \ref{cor:vanishingIII}).
    \item $\RCdim(X) = 1$ and $B$ is a non-ruled 
    surface. A general fiber of $\pi$ is a smooth rational curve. 
    In this case we have $c_\hor(\phi) = 0$ by 
    Lemma \ref{lem:rel-1}.
    Note that $\pi$ is birationally trivial in codimension $1$ by 
    Example \ref{ex:bir-triv-surf}.
    Therefore $c_\ver(\phi) = 0$
    again by Vanishing III.
    \item $\RCdim(X) = 0$ and $\pi$ is a birational isomorphism, that is $X$ is not uniruled, hence by running MMP~\cite{Mori88} we can assume it is a K-nef threefold with $\Q$-Gorenstein terminal singularities. In this case $\phi$ and $\phi^{-1}$ have no exceptional divisors \cite[Corollary 3.54]{KollarMori}, hence $c(\phi) = 0$.
\end{itemize}
\end{proof}

Let us consider a group homomorphism 
\[
\MRC \colon \Burn_*(\k) \to \Burn_*(\k)
\]
which sends
a birational class $[X]$ to the class of its MRC base $[B]$. Note that it is not a graded homomorphism as it can lower the degree of a class, however it does preserve the subgroups defining an increasing filtration 
\[
\Burn_{\le n}(\k) := \bigoplus_{m=0}^n \Burn_m(\k).
\]

\begin{example}\label{ex-MRCd-2} 
    Since the exceptional divisors of a birational automorphism
    of a smooth proper variety 
    are ruled~\cite[Theorem VI.1.2]{Kollarrat}, 
    by the resolution of singularities
    (recall that $\chr(\k) = 0$), we have
    $$c(\Bir(X)) \in [\bP^1] \cdot \Burn_{\dim(X)-2}(\k)$$
    for any $\k$-variety $X$.
    In particular,
    \begin{equation}
    \label{eq:MRC-exceptional}
    \MRC(c(\Bir(X))) \subset \Burn_{\le \dim(X)-2}(\k).
    \end{equation}
\end{example}

\begin{proposition}\label{prop:RCfib}
Assume that $\k$ is a field of characteristic zero.
Let $\pi_i\colon X_i \to B_i$ for $i = 1,2$ be flat surjective morphisms 
between normal varieties with rationally connected fibers.
Let $\phi \in \Bir(\pi_1,\pi_2)$ 
and $\sigma \colon B_1 \dto B_2$ be the induced map.
Then we have
$$\MRC(c_\ver(\phi)) = \MRC(c(\sigma)).$$
\end{proposition}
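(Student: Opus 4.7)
The plan is to expand both $c_\ver(\phi)$ and $c(\sigma)$ as sums indexed by divisorial valuations $\zeta$ on the common function field of $B_1$ and $B_2$, and then compare the two sums term by term after applying $\MRC$. By Proposition~\ref{pro:cvercons},
\[
c_\ver(\phi) = \sum_{\zeta \in B_1^{(1)} \cup B_2^{(1)}} \left([\pi_2^{-1}(\ol{\zeta}^{B_2})] - [\pi_1^{-1}(\ol{\zeta}^{B_1})]\right),
\]
while unraveling the defining formula~\eqref{eqn:c-intro} for $\sigma$ directly yields
\[
c(\sigma) = \sum_{\zeta \in B_1^{(1)} \cup B_2^{(1)}} \left([\ol{\zeta}^{B_2}] - [\ol{\zeta}^{B_1}]\right),
\]
both with the convention that a term vanishes whenever $\zeta \notin B_i^{(1)}$ (which, for the first formula, follows from flatness of $\pi_i$ since then $\pi_i^{-1}(\ol{\zeta}^{B_i})$ has codimension $\ge 2$ and contributes no prime Weil divisors).

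The heart of the proof is the key identity
\[
\MRC([\pi_i^{-1}(D)]) = \MRC([D]) \quad \text{for every prime divisor } D \subset B_i.
\]
Since $\pi_i$ has rationally connected fibers and $\chr(\k) = 0$, each fiber of $\pi_i$ is geometrically irreducible; together with flatness and the irreducibility of $D$, this forces $\pi_i^{-1}(D)$ to be irreducible, with generic fiber over $D$ equal to the rationally connected fiber of $\pi_i$ over the generic point of $D$. Lemma~\ref{lem:RC-fibers-MRC}, applied to the dominant morphism $\pi_i^{-1}(D) \to D$ composed with the MRC fibration of $D$, then identifies the MRC base of $\pi_i^{-1}(D)$ birationally with that of $D$, proving the identity.

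Granted the key identity, applying $\MRC$ to the expansion of $c_\ver(\phi)$ produces exactly the expansion of $\MRC(c(\sigma))$ term by term: for $\zeta \in B_1^{(1)} \cap B_2^{(1)}$ the centers $\ol{\zeta}^{B_1}$ and $\ol{\zeta}^{B_2}$ are birational via $\sigma$, so the two summands have the same $\MRC$-image and cancel on both sides, while for $\zeta$ in the symmetric difference only one of the two terms is nonzero and matches its counterpart. The main technical obstacle is the key MRC identity above: ensuring that the fibers of $\pi_i$ over codimension-one points of $B_i$ remain rationally connected (and hence geometrically irreducible) is where the hypothesis that \emph{all} fibers of $\pi_i$ are rationally connected, rather than merely the generic one, plays an essential role, together with $\chr(\k)=0$ so that rational connectedness yields geometric irreducibility.
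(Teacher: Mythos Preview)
Your proof is correct and follows essentially the same route as the paper: apply Proposition~\ref{pro:cvercons} to expand $c_\ver(\phi)$, use the valuation formula~\eqref{eq:c-valuations-def} for $c(\sigma)$, and compare term by term via the key identity $\MRC([\pi_i^{-1}(D)]) = \MRC([D])$, which both you and the paper deduce from Lemma~\ref{lem:RC-fibers-MRC} together with the irreducibility of $\pi_i^{-1}(D)$ (granted by flatness and rational connectedness of the fibers). The only cosmetic difference is that you spell out the term-by-term cancellation explicitly, whereas the paper simply invokes~\eqref{eq:c-valuations-def} at the end.
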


\begin{proof}
Since $\pi_i$ is flat and surjective, and the fibers are irreducible (because they are rationally connected) vertical prime divisors in $X_i$ are in bijection with prime divisors in $B_i$.
By Proposition~\ref{pro:cvercons}, we have
\begin{equation}\label{eqn-cverproof}
    c_\ver(\phi) = \sum_{\zeta \in B_1^{(1)} \cup B_2^{(1)}}
\left([\pi_2^{-1}(\ol{\zeta}^{B_2})] - [\pi_1^{-1}(\ol{\zeta}^{B_1})] \right).
\end{equation}
As the fibers of $\pi_i$ are rationally connected, using Lemma \ref{lem:RC-fibers-MRC}
we obtain
$$
\MRC\left([\pi_i^{-1}(\ol{\zeta}^{B_i})] \right)
= \MRC\left([\ol{\zeta}^{B_i}] \right).
$$
Applying $\MRC(-)$ to both sides of~\eqref{eqn-cverproof}, together with~\eqref{eq:c-valuations-def},
we get the result.
\end{proof}

The following corollary extends Example~\ref{ex-MRCd-2}.

\begin{corollary}\label{cor:ver-MRC}
Assume that $\k$ is a field of characteristic zero.
    Let $\pi\colon X \to B$ be a dominant morphism
    of relative dimension $d$. Suppose that
    the generic fiber has rationally connected geometric irreducible components.
    We have
     \[
     \MRC(c_\ver(\Bir(\pi))) \subset \Burn_{\le \dim(X)-d-2}(\k).
     \]
\end{corollary}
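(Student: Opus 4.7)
The plan is to reduce to Proposition \ref{prop:RCfib} via the rational Stein factorization, and then combine with Example \ref{ex-MRCd-2} applied to the base. First I would let $X \xrightarrow{\wt{\pi}} \wt{B} \xrightarrow{f} B$ denote the rational Stein factorization of $\pi$. By Proposition \ref{prop:Stein}(3) the geometric generic fiber of $\wt{\pi}$ is integral, and it coincides (as a scheme) with a geometric irreducible component of the generic fiber of $\pi$, which is rationally connected by hypothesis; thus $\wt\pi$ has geometrically rationally connected generic fiber. Proposition \ref{prop:Stein}(4) gives $\Bir(\pi) \subset \Bir(\wt{\pi})$, and Corollary \ref{cor-SFchor} ensures that $c_\ver(\phi)$ is unchanged under this inclusion. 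Since $f$ is finite, $\dim(\wt{B}) = \dim(B) = \dim(X)-d$, so the target filtration degree $\dim(X) - d - 2$ is unaffected.

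Second, I would shrink $X$ and $\wt{B}$ to dense open normal subvarieties $U \subset X$ and $\wt{V} \subset \wt{B}$ such that $\wt{\pi}|_U \colon U \to \wt{V}$ is a flat surjective morphism to which Proposition \ref{prop:RCfib} applies. Flatness comes from generic flatness; the rational connectedness condition on the fibers over codimension-one points of $\wt{V}$ can be arranged (in characteristic zero) by passing to a smooth projective compactification of $\wt{\pi}$ over $\wt{B}$ and invoking the openness of the rationally connected locus in smooth projective families, due to Koll\'ar--Miyaoka--Mori. Replacing $\phi$ by its restriction $\phi^U_{\wt{V}}$ does not alter $c_\ver(\phi)$ by equation~\eqref{eq:phi-phiU}.

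Third, applying Proposition \ref{prop:RCfib} to $\wt{\pi}|_U$ (viewed as both $\pi_1$ and $\pi_2$) would yield
$$\MRC(c_\ver(\phi)) \;=\; \MRC(c_\ver(\phi^U_{\wt{V}})) \;=\; \MRC(c(\wt{\sigma})),$$
where $\wt{\sigma} \in \Bir(\wt{V}) \subset \Bir(\wt{B})$ is the induced birational self-map of the base. Finally, Example~\ref{ex-MRCd-2} applied to $\wt{B}$ gives
$$\MRC(c(\Bir(\wt{B}))) \;\subset\; \Burn_{\le \dim(\wt{B}) - 2}(\k) \;=\; \Burn_{\le \dim(X) - d - 2}(\k),$$
which delivers the desired containment.

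The main obstacle is the shrinking step: one must verify that after restricting to $U$ and $\wt{V}$, Proposition \ref{prop:RCfib} genuinely applies, which really amounts to controlling the fibers of $\wt{\pi}$ over those finitely many codimension-one points of $\wt{B}$ that make nonzero contributions in the sum from Proposition~\ref{pro:cvercons}. The other steps (Stein reduction, invariance under restriction, and the final application of Example~\ref{ex-MRCd-2}) are essentially bookkeeping once this technical point is settled.
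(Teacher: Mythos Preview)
Your proposal is correct and follows essentially the same route as the paper's proof: reduce via Corollary~\ref{cor-SFchor} to the case of irreducible geometric generic fiber, birationally modify $\pi$ so that Proposition~\ref{prop:RCfib} applies, and finish with Example~\ref{ex-MRCd-2} on the base. The paper handles your ``main obstacle'' by replacing $\pi$ with a smooth proper morphism between smooth varieties so that openness of rational connectedness guarantees all fibers are rationally connected, which is precisely the compactification-and-openness argument you sketch.
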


\begin{proof} 
By Corollary~\ref{cor-SFchor}, we can assume that the geometric generic fiber of $\pi$ is irreducible.
Up to birational modification, we can
assume that $X$ and $B$ are smooth and that
$\pi$ is smooth and proper, in particular surjective.
Since rationally connectedness is an open property for smooth proper morphisms \cite[Theorem IV.3.11]{Kollarrat}
we can also assume that every fiber of $\pi$ is  
rationally connected.
The result now follows from Proposition~\ref{prop:RCfib},
because by \eqref{eq:MRC-exceptional}
$$\MRC(c(\Bir(B))) \subset \Burn_{\le \dim(B) - 2}(\k).
$$ 
\end{proof}

\section{Unboundedness of the image of $c$ and applications}\label{sec-ubd}

In this section, $\k$ is a field of characteristic zero. The main results are Theorem \ref{thm-unbounded-c-B}
and its Corollaries 
\ref{cor:abelianizations}
and \ref{cor:gener-CB-SB}.
The other results in this section are technical steps required in the proof of 
Theorem \ref{thm-unbounded-c-B}.
These include constructing  an unbounded sequence of  elliptic fibrations with prescribed properties (Lemma \ref{lem:B-diagram} and Proposition \ref{prop:constr-J}).

\subsection{Unboundedness}

\begin{definition}\label{def-unbounded} 
We say that a subgroup $H \subset \Burn_*(\k)$
is geometrically bounded if there is
a flat proper morphism $\cD \to T$ of $\ol{\k}$-schemes of finite type such that
the image of $H$ in $\Burn_*(\ol{\k})$
is contained in a subgroup 
generated by $[\cD_t]$, $t \in T(\ol{\k})$.
\end{definition}

\begin{theorem}\label{thm-unbounded-c-B}
Let $\k$ be a field of characteristic zero.
Assume that $X$ is an $n$-dimensional variety birational to $B \times \P^3$
for some geometrically integral variety $B$ of positive dimension (for example $X = \P^n$ with $n \ge 4$).
Then the image
$$\Ima\left(\Bir(X) \overset{c}{\longrightarrow} \Burn_{n-1}(\k) \overset{\MRC}{\longrightarrow} \Burn_{\le n-1}(\k)\right)$$
contains
a geometrically unbounded subgroup of $\Burn_{n-2}(\k)$.
\end{theorem}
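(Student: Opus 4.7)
First I would reduce to $X = B \times \P^3$ with projection $\pi \colon X \to B$, using the birational invariance of $c$ and $\MRC$. The plan is then to produce elements of $\Bir(X)$ by spreading out elements of $\Bir(\P^3_{\k(B)})$ that blow up and down D-equivalent genus-one curves over $\k(B)$, and to track their motivic invariants through $\MRC$. Concretely, Corollary~\ref{cor:P3-kB} supplies $\phi \in \Bir(\P^3_{\k(B)})$ with motivic invariant $c(\phi) = [\P^1_{\k(B)}] \cdot ([C/\k(B)] - [C'/\k(B)])$ for non-birational but D-equivalent genus-one curves $C, C'$ over $\k(B)$. Via the bijection~\eqref{eq:bijection-Mor-generic}, each such $\phi$ extends canonically to $\tilde{\phi} \in \Bir(X/B)$, which covers $\mathrm{id}_B$. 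Vanishing II (Corollary~\ref{cor:vanishingII}) then gives $c_\ver(\tilde{\phi}) = 0$, and combining Definition~\ref{def-chor1} with the identification of function fields $\k(B)(C \times \P^1) \simeq \k(\P^1_\k \times \cC)$ yields
\[
c(\tilde{\phi}) = c_\hor(\tilde{\phi}) = [\P^1_\k \times \cC] - [\P^1_\k \times \cC'] \in \Burn_{n-1}(\k),
\]
where $\cC, \cC' \to B$ are chosen $\k$-spread-outs of $C, C'$.

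The crucial input, which I expect to be the content of Lemma~\ref{lem:B-diagram} and Proposition~\ref{prop:constr-J}, is to produce such spread-outs as non-birational elliptic fibrations over $B$ with a prescribed common $j$-invariant $j \colon B \dashrightarrow \P^1$ and with $\kappa(\cC) = \kappa(\cC') = \dim B$. The positivity of the Kodaira dimension forces $\cC$ and $\cC'$ to be non-uniruled; applying Lemma~\ref{lem:RC-fibers-MRC} to the projection $\P^1_\k \times \cC \to \cC$ (and likewise for $\cC'$) identifies $\cC$ and $\cC'$ respectively with the bases of the MRC fibrations of $\P^1_\k \times \cC$ and $\P^1_\k \times \cC'$. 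Consequently
\[
\MRC(c(\tilde{\phi})) = [\cC] - [\cC'] \in \Burn_{n-2}(\k),
\]
matching the expected dimension $\dim \cC = \dim B + 1 = n - 2$.

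The main obstacle is arranging geometric unboundedness of the subgroup generated by the classes $[\cC] - [\cC']$ as $\tilde\phi$ varies. For this I would parametrize the construction over an unbounded sequence of $j$-invariants, letting $\deg j$ grow without bound; standard invariants of the resulting elliptic fibrations, such as Euler characteristic or canonical volume, then grow accordingly. If all of the classes $[\cC_k]$ and $[\cC'_k]$ lay in the subgroup of $\Burn_{*}(\ol{\k})$ generated by $[\cD_t]$ for a single flat proper family $\cD \to T$ of $\ol{\k}$-schemes of finite type, those numerical invariants would be uniformly bounded across the geometric fibres of $\cD$, contradicting the unbounded growth. A subsidiary argument in the spirit of Lemma~\ref{lem-bdStein} is needed to ensure that the pairs $(\cC_k, \cC'_k)$ do not collapse to one another after spreading out, so that $[\cC_k] \neq [\cC'_k]$ is preserved in $\Burn_{n-2}(\ol{\k})$; this is where I expect the most delicate technical work to lie.
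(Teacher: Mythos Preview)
Your outline matches the paper's proof closely: reduce to $X = B \times \P^3$, lift $\phi \in \Bir(\P^3_{\k(B)})$ from Corollary~\ref{cor:P3-kB} to $\tilde\phi \in \Bir(X/B)$, kill $c_\ver$ via Vanishing~II, spread out $C,C'$ to elliptic fibrations $Y,Y' \to B$ of Kodaira codimension~$1$, and use non-uniruledness to identify $\MRC(c(\tilde\phi)) = [Y] - [Y']$. Two points in your sketch, however, are not right as stated.

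\medskip
\textbf{The unboundedness invariant.} Your suggested invariants do not work. The varieties $Y$ have $\kappa(Y) = \dim Y - 1 < \dim Y$, so the canonical volume vanishes identically; and the topological Euler characteristic is not a birational invariant, so it cannot be read off from a class in $\Burn_*(\ol\k)$. The paper instead uses the \emph{Iitaka--Stein degree}: the Stein degree of the $j$-map of the Iitaka fibration. Proposition~\ref{prop:constr-J}(2) arranges this to be $\ge d$, while Proposition~\ref{pro-bdSdegell} shows it is bounded on the fibres of any flat projective family (after stratifying and simultaneously resolving). Since $\Burn_*(\ol\k)$ is free on birational classes, a nonzero difference $[Y]-[Y']$ lying in the subgroup generated by $\{[\cD_t]\}$ forces $Y$ and $Y'$ each to be birational to some fibre, so their Iitaka--Stein degrees would be bounded --- contradiction.

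\medskip
\textbf{The non-collapsing over $\ol\k$.} This is not a spreading-out argument ``in the spirit of Lemma~\ref{lem-bdStein}''. The paper builds a rigidity property directly into the construction: Proposition~\ref{prop:constr-J}(4) guarantees $\Bir(J_{d,\ol\k}) = \Bir(J_{d,\ol\k}/B_{\ol\k})$, obtained by arranging $\Aut(\ol\k(B)/\ol\k(j))$ to be trivial (Lemma~\ref{lem:B-diagram}). Then if $Y_{\ol\k}$ and $Y'_{\ol\k}$ were birational, the induced map would descend through the Iitaka fibrations to some $\sigma \in \Bir(B_{\ol\k})$, and passing to Jacobians gives an element of $\Bir(J_{d,\ol\k})$ over $\sigma$; rigidity forces $\sigma = \id$, so $C_{\ol\k(B)} \simeq C'_{\ol\k(B)}$, contradicting Corollary~\ref{cor:P3-kB}. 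Without this rigidity, a birational self-map of $B_{\ol\k}$ could in principle carry $C$ to $C'$ and your differences would vanish.
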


The assumption $\dim B > 0$ 
in Theorem~\ref{thm-unbounded-c-B} 
is necessary by Theorem~\ref{thm-van3fold}.
Note that the MRC base dimension $n-2$ in Theorem~\ref{thm-unbounded-c-B} 
is the maximal possible by Example~\ref{ex-MRCd-2}.
To prove Theorem \ref{thm-unbounded-c-B}, we will use an unbounded sequence of elliptic fibrations over $B$.
We first prove some preliminary results
under the assumptions of Theorem \ref{thm-unbounded-c-B}.

We start by recalling some basic facts about Iitaka fibrations \cite[Theorem 6.11]{UenoClassAlgVar}, \cite[\S2]{LazarsfeldPos}. Let $X$ be a smooth projective variety of Kodaira dimension $\kappa(X) = \dim(X) - 1$. In this case we say that $X$ has \emph{Kodaira codimension $1$}. The so-called Iitaka fibration, defined 
by a linear system $|K_X^{\otimes m}|$ for a sufficiently divisible positive $m$ \cite[\S2]{LazarsfeldPos}, 
is a rational dominant map
\[
\pi\colon X \dto Z
\]
whose generic fiber is a curve of genus $1$, namely, $\pi$ is a rational elliptic fibration.
Note that even though the Iitaka fibration in \cite{UenoClassAlgVar}, \cite{LazarsfeldPos} is  defined over $\k = \C$, due its canonical nature, it automatically descends to 
any ground field of characteristic zero.

The Iitaka fibration is a birational invariant of $X$ in the sense that every birational map induces a birational map between the Iitaka fibrations. 
Furthermore, every rational dominant map $X \dto Z'$ whose generic fiber is a curve of genus $1$ is birational to the Iitaka  fibration of $X$ \cite[Theorem 6.11(5)]{UenoClassAlgVar}, in other words $X$ has an essentially unique structure of a rational elliptic fibration.

Thus for any smooth projective variety of Kodaira codimension $1$ we obtain a canonically defined $j$-invariant map $j_X \colon Z \dto \P^1$.
We refer to the Stein degree (see Definition \ref{def-RStein}) of $j_X$ as the \emph{Iitaka--Stein degree} of $X$. The Iitaka--Stein degree provides a simple way to measure the complexity of Kodaira codimension $1$ varieties.

If $X$ is an arbitrary integral variety, then by the Kodaira dimension, the Iitaka fibration, and the Iitaka--Stein degree, we mean the corresponding invariants for any smooth projective model $\wt{X}$ of $X$.

\begin{prop}\label{pro-bdSdegell}
    Let $\pi \colon \cX \to T$ be a projective morphism between varieties over an algebraically closed field $\k$ of characteristic zero.
    Let $U \subset T(\k)$ be the locus parameterizing 
    fibers
    $X_t \cnec \pi^{-1}(t)$ which are 
    integral varieties of
    Kodaira codimension $1$.
    Then the Iitaka--Stein degrees of $X_t$, $t \in U$
    are bounded above.
\end{prop}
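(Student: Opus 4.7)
The plan is to proceed by Noetherian induction on $T$, so it suffices to exhibit a dense open $V \subseteq T$ on which the Iitaka--Stein degrees of $X_t$ for $t \in V(\k) \cap U$ are uniformly bounded (then apply the induction hypothesis to $T \setminus V$). Without loss of generality we may assume $T$ is irreducible and $U$ is Zariski dense in $T$; otherwise the Zariski closure of $U$ is a proper closed subscheme and the inductive hypothesis applies directly.

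First, I would reduce to the case where $\pi\colon \cX \to T$ is a smooth projective morphism with $T$ smooth irreducible, and whose generic fiber $X_\eta$ has Kodaira codimension one. After shrinking $T$ we may assume $T$ is smooth and $\pi$ is flat. Taking a resolution of singularities $\tilde{\cX} \to \cX$ preserves projectivity over $T$; by generic smoothness and further shrinking, we arrange that $\tilde\pi\colon \tilde\cX \to T$ is smooth projective with every fiber over a point of a dense open birational to the original fiber. Since the Iitaka--Stein degree is a birational invariant, I replace $\cX$ by $\tilde\cX$. The deformation invariance of plurigenera in characteristic zero (Siu's theorem) now forces $\kappa(X_t)$ to be constant on $T$, so $\kappa(X_\eta) = \dim(X_\eta) - 1$.

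Next, I would spread out the Iitaka fibration in a family. By invariance of plurigenera, $h^0(X_t, \omega_{X_t}^{\otimes m})$ is constant in $t$, so Grauert's theorem makes $\pi_*\omega_{\cX/T}^{\otimes m}$ locally free with formation commuting with arbitrary base change. For $m$ sufficiently divisible, the relative pluricanonical map defines a rational map $\Phi\colon \cX \dto \cZ$ over $T$, where $\cZ \subseteq \P(\pi_*\omega_{\cX/T}^{\otimes m})$ is the image. Shrinking $T$ once more, I arrange that $\cZ \to T$ is flat with geometrically integral fibers and that the restriction $\Phi|_{X_t}$ coincides with the Iitaka fibration of $X_t$ for every $t \in T(\k)$. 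The $j$-invariant of the generic fiber of $\Phi$ (an elliptic curve over $\k(\cZ)$) then defines a rational map $j\colon \cZ \dto \P^1_T$ over $T$ whose restriction to $\cZ_t$ is the $j$-invariant map $Z_t \dto \P^1$ attached to $X_t$ for every $t$ in a dense open.

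To conclude, I would apply Lemma~\ref{lem-bdStein} to $j\colon \cZ \dto \P^1_T$ viewed as a dominant rational map over $T$, noting that $\cZ \to T$ and $\P^1_T \to T$ have geometrically integral fibers after the previous shrinking and that, after one more shrinking, the indeterminacy locus of $j$ contains no fiber of $\cZ \to T$. The lemma then provides a finite stratification of $T$ on which the Stein degree of $j$ on fibers is constant; combined with the fact that this coincides with the Iitaka--Stein degree of $X_t$ by construction, this yields the required uniform bound and completes the inductive step. The main obstacle is the spreading-out carried out in the second step: one must ensure that the relative pluricanonical map, restricted to each fiber, genuinely recovers the Iitaka fibration of that fiber, which requires invoking invariance of plurigenera together with Grauert's theorem and iterated shrinking of $T$ to guarantee compatibility of base change.
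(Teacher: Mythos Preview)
Your proof is correct and follows essentially the same strategy as the paper: reduce to a smooth projective family, invoke deformation invariance of plurigenera, construct a relative Iitaka fibration $\cX \dto \cZ$ over $T$ together with a relative $j$-map $\cZ \dto \P^1_T$, and then apply Lemma~\ref{lem-bdStein} to bound the Stein degrees of the fibers of $j$.

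The one noteworthy difference is in how the relative Iitaka fibration is built. You fix a single sufficiently divisible $m$, use Grauert's theorem to make $\pi_*\omega_{\cX/T}^{\otimes m}$ locally free and compatible with base change, and then shrink $T$ until the $m$-th pluricanonical map on each fiber realizes the Iitaka fibration. The paper instead appeals to finite generation of the relative canonical ring (Kawamata, building on BCHM) and takes $\cZ = \cProj\bigl(\bigoplus_m \pi_*\omega_{\cX/T}^{\otimes m}\bigr)$ directly. Your route is more elementary in that it avoids the deep finite generation theorem, at the cost of the extra shrinking step and the verification (which you correctly flag as the main obstacle) that the chosen $m$ really computes the Iitaka fibration on every remaining fiber; the paper's route gives the Iitaka base in one stroke but imports heavier machinery. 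Both are valid and lead to the same endgame via Lemma~\ref{lem-bdStein}.
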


\begin{proof}
    We can remove the closed subscheme of $T$ 
    parameterizing fibers which are not integral,
    and thus assume that every fiber of 
    $\pi$ is reduced and irreducible.
    Then there exists a finite stratification $T = \bigsqcup_i T_i$
    such that over each $T_i$, the family $\pi$ 
    has a simultaneous resolution
    of singularities $\wt{\cX_i} \to T_i$.
    As the Kodaira dimension and the Iitaka--Stein degree are birational invariants,
    working stratum by stratum,
    we can therefore assume that 
    $\pi$ is smooth and projective.
    Since the Kodaira dimension is locally constant 
    in characteristic zero~\cite{SiuPg},
    we can assume that 
    every fiber $X_t \cnec \pi^{-1}(t)$ has Kodaira codimension $1$.

    By~\cite[Theorem 2]{KawamataBook}, which improves~\cite{BCHM},
    the $\cO_T$-algebra 
    $\bigoplus_{m = 0}^\infty \pi_*\go_{\cX/T}^{\otimes m}$
    is finitely generated.
    Define 
    $$\cZ \cnec 
    \cProj \left(\bigoplus_m \pi_*\go_{\cX/T}^{\otimes m}\right),$$
    we thus have a factorization
    $$
     \xymatrix{
    \cX \ar[dr]_\pi \ar@{-->}[r]  &  \cZ  \ar@{-->}[r] \ar[d] & \P^1_T \ar[dl] \\
     & T & }
    $$
    such that over every $t \in T$,
    we get an Iitaka fibration 
    $\cX_t \dto \cZ_t$ of $\cX_t$ and its $j$-map $j_{\cX_t}\colon \cZ_t \dto \P^1$.
    The boundedness of the Stein degrees for these $j$-maps
    follows from Lemma~\ref{lem-bdStein} applied to the right triangle in the diagram.
\end{proof}

Our next goal is to construct an unbounded sequence of Kodaira codimension one varieties elliptically fibered over a fixed base $B$, see Proposition \ref{prop:constr-J}. The first step in that direction is the following.

\begin{lem}\label{lem:B-diagram} 
Let $B$ be a smooth projective geometrically integral
variety of dimension $n > 0$ over a field $\k$ of characteristic zero.
Given any finite morphism $j \colon \P^1_t \to \P^1_j$
and an integer $d \ge 1$, there exist a finite morphism
$g\colon B \to \P^n$, a finite morphism
$f_d \colon \P^1_u \to \P^1_t$ of degree at least $d$,
and a linear projection $p\colon \P^n \dto \P^1_u$
such that
for the composition
\begin{equation}\label{cd-Bdiagram}
    \xymatrix{
B \ar[r]^g & \P^n \ar@{-->}[r]^p 
& \P^1_u \ar[r]^{f_d} & \P^1_t \ar[r]^j &\P^1_j,
}
\end{equation}
the automorphism group $\Aut(\ol{\k}(B)/\ol{\k}(j))$ 
is trivial.
\end{lem}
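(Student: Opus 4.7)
The plan is to work with the tower of fields $\ol{\k}(j) \subset \ol{\k}(u) \subset \ol{\k}(B)$ induced by the composition in~\eqref{cd-Bdiagram}, where the first extension is finite of degree $m = \deg(j) \cdot \deg(f_d)$ and the second has transcendence degree $n-1$. The goal is to choose $g$, $p$, and $f_d$ so that (i) $\ol{\k}(u)$ is the algebraic closure of $\ol{\k}(j)$ in $\ol{\k}(B)$, (ii) $\Aut(\ol{\k}(u)/\ol{\k}(j)) = 1$, and (iii) $\Aut(\ol{\k}(B)/\ol{\k}(u)) = 1$. Any $\phi \in \Aut(\ol{\k}(B)/\ol{\k}(j))$ preserves the algebraic closure of $\ol{\k}(j)$ in $\ol{\k}(B)$, hence by (i) preserves $\ol{\k}(u)$; by (ii) the restriction is the identity, so $\phi$ lies in $\Aut(\ol{\k}(B)/\ol{\k}(u))$, which by (iii) is trivial.

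To arrange (ii), I will choose $f_d$ of degree $\ge \max(d,3)$ generically so that the Galois closure of $j \circ f_d \colon \P^1_u \to \P^1_j$ has Galois group equal to the full symmetric group $S_m$ on $m$ letters. A standard monodromy-of-composition argument (generic simple branch points contribute transpositions which, after composing with $j$, generate $S_m$) produces such $f_d$. The subfield $\ol{\k}(u)$ corresponds to a point stabilizer $S_{m-1}$ in $S_m$, so $\Aut(\ol{\k}(u)/\ol{\k}(j)) \cong N_{S_m}(S_{m-1})/S_{m-1} = 1$, since $S_{m-1}$ is self-normalizing in $S_m$ for $m \ge 3$.

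For (i) and (iii), I will begin with a finite surjective morphism $g_0 \colon B \to \P^n$ provided by Noether normalization (or alternatively, by a generic linear projection from an embedding of $B$ into a projective space), and set $g := \rho \circ g_0$, where $\rho \colon \P^n \to \P^n$ is a very general finite self-map of sufficiently high degree $e \ge n+2$. Choosing $p$ as the linear projection from a generic codimension-two center $L \subset \P^n$, the pencil of hyperplanes through $L$ pulls back under $\rho$ to a pencil of degree-$e$ hypersurfaces in $\P^n$. By Bertini and the generic rigidity of smooth high-degree hypersurfaces of general type (in the spirit of Matsumura--Monsky), the generic member of this pencil is a smooth hypersurface of general type with trivial birational self-map group over $\ol{\k}(u)$. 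Pulling back by $g_0$, with additional generic modifications ensuring that the finite cover $F_\eta := g_0^{-1}(\rho^{-1}(H_u)) \to \rho^{-1}(H_u)$ has full symmetric monodromy and that $F_\eta$ itself retains trivial birational self-maps, the generic fiber $F_\eta$ of $p \circ g$ over $\ol{\k}(u)$ is both geometrically integral (giving (i)) and birationally rigid over $\ol{\k}(u)$ (giving (iii)).

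The main obstacle is establishing (iii): the birational rigidity of $F_\eta$ over $\ol{\k}(u)$. The strategy combines Bertini's theorem, generic rigidity for high-degree hypersurfaces of general type, and density arguments in the relevant moduli spaces, guaranteeing that the specific pencil of fibers, once pulled back to $B$ through a generic finite cover $g_0$, avoids the exceptional locus of varieties admitting nontrivial birational self-maps.
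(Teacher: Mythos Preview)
Your decomposition into the tower $\ol{\k}(j) \subset \ol{\k}(u) \subset \ol{\k}(B)$ and the three conditions (i)--(iii) is exactly the framework the paper uses, and your argument for (ii) via $S_m$-monodromy and self-normalization of $S_{m-1}$ is a legitimate alternative to the paper's ramification-and-fixed-point count. However, the proposal has two genuine gaps.

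First, condition (iii) is not established. You want the geometric generic fiber $F_\eta = g_0^{-1}(\rho^{-1}(H_u))$ to have trivial $\Bir$. Even granting that $\rho^{-1}(H_u)$ is a smooth hypersurface of general type with trivial automorphisms, this does not transfer to the finite cover $g_0^{-1}(\rho^{-1}(H_u))$: rigidity is not preserved under finite covers, and ``full symmetric monodromy'' of $g_0$ controls only deck transformations over $\rho^{-1}(H_u)$, not arbitrary birational self-maps of the cover. Your final paragraph openly calls this ``the main obstacle'' and then offers only a strategy (Bertini, Matsumura--Monsky, unspecified density arguments), not an argument. The paper sidesteps this entirely: it takes $g$ to be defined by a general $n$-dimensional subsystem of a very ample $|L|$ on $B$ with $L \otimes \omega_B$ very ample, so that the general fiber $D$ of $p \circ g$ is a smooth divisor on $B$ with ample canonical bundle; then $\Bir(D) = \Aut(D)$ automatically, and a cited rigidity theorem gives $\Aut(D) = 1$ for very general $D$. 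No passage through hypersurfaces in $\P^n$ or covers thereof is needed.

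Second, the case $n = 1$ is missing. When $B$ is a curve, your linear projection $p \colon \P^1 \dto \P^1_u$ from a codimension-$2$ center does not make sense, and condition (i) fails outright since $\ol{\k}(B)/\ol{\k}(u)$ is then a finite extension, so $\ol{\k}(u)$ cannot be the algebraic closure of $\ol{\k}(j)$ in $\ol{\k}(B)$. The paper treats $n = 1$ separately: it chooses $g \colon B \to \P^1_u$ as a simple covering with enough branch points, arranged so that over each branch image in $\P^1_j$ there is a unique ramification point of $B \to \P^1_j$; any element of $\Aut(\ol{\k}(B)/\ol{\k}(j))$ must fix all such points, and a Lefschetz-fixed-point count forces it to be the identity. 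The curve case is then reused in the higher-dimensional case to produce $f_d$ with $\Aut(\ol{\k}(u)/\ol{\k}(j)) = 1$.
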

\begin{proof}

We first consider the case when $B$ 
is a curve. In this case $p$ is the identity 
and $f_d$ is an arbitrary finite morphism of degree $d$.
We take $g$ to be a Lefschetz pencil, which in dimension one is the same as a simple covering,
that is we assume that
$g$ has simple ramification and at most one ramification point over every point in $\P^1_u(\ol{\k})$.
We can assume in addition  that
the branch locus $\Sigma \subset \P^1_u{(\ol{\k})}$ of $g$ satisfies $|\Sigma| > 2g(B) + 2$.
Furthermore we can make a choice of $g$, 
such that $jf_d|_{\gS}$ is injective and $jf_d$ is a simple covering
in the neighborhood of $j(f_d(\Sigma))$.
It follows that over each point $x \in j(f_d(\Sigma))$,
the finite cover $B \to \P^1_j$ has exactly one ramification point.
Therefore
any element of $\Aut(\ol{\k}(B)/\ol{\k}(j))$ 
fixes $|\Sigma|$ ramification points in $B$. Since $|\Sigma| > 2g(B) + 2$, we see that there are no nontrivial automorphisms by the Lefschetz fixed point theorem.    

Now assume that $\dim(B) \ge 2$.
Take a very ample line bundle $L$ on $B$ such that 
    $L \otimes \go_B$ is also very ample; see~\cite[Example 1.2.10]{LazarsfeldPos} for the existence of $L$.
    By adjunction, 
    a smooth member $D$ of $|L|$ has ample canonical class.

    Let $g\colon B \to \bP^n$ be the finite morphism 
    defined by a general linear system in $|L|$ of dimension $n$.
    The composition $B \xto{g} \bP^n \overset{p}\dto \P^1_u$
    is then defined by a general
    pencil $|L|'$ in $|L|$.
    Blowing up the base locus of the pencil $|L|'$, we obtain a resolution $\ti{f}\colon \wt{B} \to \P^1_u$ of $B \dto \P^1_u$.

    We show that
    $\Bir({B}_{\ol{\k(\P^1_u)}})$ is trivial, 
    which implies that 
    $\Aut(\ol{\k}(B)/\ol{\k}(u))$
    is trivial.
    By extending $\k$, we can assume that it is uncountable and algebraically closed, in which case there exists an isomorphism $\ol{\k(\P^1)} \simeq \k$ which identifies the geometric generic fiber ${B}_{\ol{\k(\P^1_u)}}$ with the very general fiber $D \subset B$ of $\wt{f}$, see e.g.~\cite[Lemma 2.1]{VialVG}.
    As $\dim B \ge 2$, 
    $D$ 
    is irreducible~\cite[Theorem 3.3.1]{LazarsfeldPos}.
    Since the canonical bundle of $D$ is ample,
    we have $\Aut(D) = \Bir(D)$, see e.g.~\cite[Corollary 1.2]{CheltsovCremona}.
    We conclude using~\cite[Theorem 1.4]{lyuZhang}
    that 
    $$\Bir({B}_{\ol{\k(\P^1)}}) \simeq \Bir(D) = \Aut(D)$$ 
    is trivial.

Arguing like in the first part of the proof, we can construct
a finite morphism
$f_d \colon \P^1_u \to \P^1_t$ of degree 
at least $d$ such that
$\Aut(\ol{\k}(u)/\ol{\k}(j))$ is trivial; precisely, in the notation of the first part of the proof we take $B = \P^1_u$ and $g f_d$ to be $f_d$ in the current case.
In the tower of extensions from~\eqref{cd-Bdiagram}:
$$\ol{\k}(j) \subset \ol{\k}(t)  \subset \ol{\k}(u) \subset \ol{\k}(B),$$
since $\ol{\k}(u)$ is the algebraic closure of $\ol{\k}(j)$ in $\ol{\k}(B)$ (because $B$ is geometrically integral over
$\k(u)$ due to $\dim B \ge 2$), 
the subfield $\ol{\k}(u) \subset \ol{\k}(B)$ is preserved under
$\Aut(\ol{\k}(B)/\ol{\k}(j))$, and
we have an exact sequence
$$1 \to \Aut(\ol{\k}(B)/\ol{\k}(u)) 
\to \Aut(\ol{\k}(B)/\ol{\k}(j)) \to \Aut(\ol{\k}(u)/\ol{\k}(j)).$$
As both $\Aut(\ol{\k}(B)/\ol{\k}(u))$ and $\Aut(\ol{\k}(u)/\ol{\k}(j))$
are trivial, so is $\Aut(\ol{\k}(B)/\ol{\k}(j))$.
\end{proof}

\begin{proposition}\label{prop:constr-J}
Let $B$ be as in Lemma~\ref{lem:B-diagram}. 
There exists a sequence $\xi_d\colon J_d \to B$, $d \ge 1$ of nonisotrivial elliptic fibrations with the following properties:
\begin{enumerate}
    \item $J_d$ has Kodaira codimension $1$ and 
    $\xi_d$ is an Iitaka fibration for $J_d$.
    \item The Iitaka--Stein degree of $J_d$ is at least $d$.
    \item The Mordell--Weil group of rational sections of $J_d \to B$ contains $5$-torsion subgroup $\Z/5$.
    \item We have 
    $\Bir(J_{d,\ol{\k}}) = \Bir(J_{d,\ol{\k}}/B_{\ol{\k}})$; namely,
    every birational automorphism of 
    $J_{d,\ol{\k}}$ preserves the elliptic fibration structure and descends to $\Id_{B_{\ol{\k}}}$ through $\xi_d$.
\end{enumerate}
\end{proposition}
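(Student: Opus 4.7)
The plan is to construct $J_d$ by pulling back a fixed nonisotrivial Weierstrass elliptic surface to $B$ through the diagram produced by Lemma \ref{lem:B-diagram}. Fix once and for all a Weierstrass elliptic fibration $\cE_0 \to \P^1_t$ with nonconstant $j$-invariant $j \colon \P^1_t \to \P^1_j$ and with $(\Z/5)^{\oplus 2}$ inside the Mordell--Weil group of its generic fiber (extending scalars by $\zeta_5$ if necessary). A concrete realization is the Shioda modular elliptic surface associated to the principal congruence subgroup $\Gamma(5)$, whose base is $X(5) \simeq \P^1$. For each $d \ge 1$, apply Lemma \ref{lem:B-diagram} to the pair $(j, d)$ to produce $g$, $p$, $f_d$, and set $\rho_d \colon B \dto \P^1_t$ to be $f_d \circ p \circ g$. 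Define $J_d$ to be a relatively minimal Weierstrass model over $B$ of the base change of $\cE_0$ along $\rho_d$, with structure morphism $\xi_d \colon J_d \to B$.

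Properties (2), (3), (4) should follow without essential difficulty. For (3), the $5$-torsion sections of $\cE_0 \to \P^1_t$ base change to rational $5$-torsion sections of $\xi_d$ because the pullback commutes with the elliptic group structure on the generic fiber. For (2), the $j$-invariant of $J_d$ is the composition $j_{J_d} = j \circ \rho_d \colon B \dto \P^1_j$; the key observation from the proof of Lemma \ref{lem:B-diagram} is that $\k(\P^1_u)$ is algebraically closed in $\k(B)$ (since $B_{\ol{\k(u)}}$ is integral when $\dim B \ge 2$, and the case $\dim B = 1$ can be handled directly using that $B \to \P^1_j$ is itself finite of degree at least $d$), so the rational Stein factorization of $j_{J_d}$ factors through $\P^1_u \to \P^1_j$, whose degree $\deg(f_d) \cdot \deg(j)$ is at least $d$. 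For (4), granted (1), $\xi_d$ is the intrinsic Iitaka fibration of $J_d$, so any $\phi \in \Bir(J_{d,\ol{\k}})$ descends to some $\sigma \in \Bir(B_{\ol{\k}})$; because $\phi$ sends the generic fiber of $\xi_d$ over $b$ to the generic fiber over $\sigma(b)$, both of which are elliptic curves with the same $j$-invariant, one obtains $j_{J_d} = j_{J_d} \circ \sigma$, i.e., $\sigma \in \Aut(\ol{\k}(B)/\ol{\k}(j))$, which is trivial by Lemma \ref{lem:B-diagram}; hence $\sigma = \Id$.

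The main obstacle is verifying property (1), namely that $\kappa(J_d) = n$ so that $\xi_d$ is an Iitaka fibration. I would invoke the canonical bundle formula for minimal Weierstrass elliptic fibrations, which gives $K_{J_d} \sim_{\Q} \xi_d^{*}(K_B + \mathcal{L}_d)$ where $\mathcal{L}_d$ is the fundamental line bundle on $B$; since $J_d$ is obtained by base change from $\cE_0$ one has $\mathcal{L}_d \cong \rho_d^{*} \mathcal{L}_0$ with $\mathcal{L}_0 = \mathcal{O}_{\P^1_t}(k)$ for some $k \ge 1$ determined by $\cE_0$. Using that $\rho_d^{*} \mathcal{O}_{\P^1_t}(1) \cong L^{\otimes \deg(f_d)}|_B$, where $L$ is the very ample line bundle on $B$ chosen in the proof of Lemma \ref{lem:B-diagram} (and $K_B + L$ is already very ample by construction), the class $K_B + \mathcal{L}_d$ is big as soon as $\deg(f_d)$ exceeds a constant depending only on $\cE_0$ and $B$; after replacing the input parameter $d$ by $d + C$ for an appropriate $C$, one thus guarantees $\kappa(J_d) = n$ for all $d$ in the sequence. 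The only additional care is to verify that the Weierstrass model pulled back from $\cE_0$ is birational to a relatively minimal one whose general fiber is a smooth curve of genus one, which is automatic since $\rho_d$ is dominant and the discriminant of $\cE_0$ is nonzero.
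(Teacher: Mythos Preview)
Your proposal is correct and follows essentially the same route as the paper: start from a nonisotrivial elliptic surface over $\P^1_t$ with full $5$-torsion (the paper cites Cox--Parry, you invoke the $\Gamma(5)$ modular surface), pull it back through the diagram of Lemma~\ref{lem:B-diagram}, and verify (1) via the Weierstrass canonical bundle formula and (4) via the triviality of $\Aut(\ol{\k}(B)/\ol{\k}(j))$. The only organizational difference is that the paper first lifts $f_d$ to a finite self-map $F_d\colon \P^n \to \P^n$ commuting with the projection $p$, fixes a Weierstrass model $W(\cL,a,b)\to\P^n$ once and for all, and then pulls back via the \emph{regular} morphism $F_d\circ g$; this sidesteps the indeterminacy of $p$ when computing $\cL_d$ and lets the ampleness check for (1) be done on $\P^n$ rather than on $B$, whereas you work directly over $B$ and must implicitly extend the rational Weierstrass data across the codimension-$2$ indeterminacy locus of $\rho_d$.
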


\begin{proof}
The idea is to start with an appropriate elliptic fibration over 
$\P^1_t$ 
and pull it back to $B$ under the maps  in \eqref{cd-Bdiagram}.
By~\cite[Theorem 5.1]{CoxParry}, 
there exists a elliptic curve $E/\Q(t)$ 
with  nonconstant $j$-invariant
and
such that $\Z/5 \subset E(\Q(t))$.
We make a scalar extension of $E/\Q(t)$ to $\k(t)$ 
and take a model  
$\xi^{\P^1} \colon J \to \P^1_t$. 
By construction the group of sections
of $\xi^{\P^1}$ contains $\Z/5$.

We now use the maps constructed in Lemma \ref{lem:B-diagram},
applied to the $j$-invariant map $j \colon \P^1_t \to \P^1_j$.
There exists a finite morphism $F_d$  of the same degree as $\deg f_d \ge d$ which fits into a commutative diagram
\begin{equation*}
%\label{cd-Bdiagram1}
    \xymatrix{
 \P^n \ar[r]^{F_d} \ar@{-->}[d]^p & \P^n \ar@{-->}[d]^p   \\ 
 \P^1 \ar[r]^{f_d} & \P^1  .
}
\end{equation*}

Fix an elliptic fibration
$$\xi^{\P^n} \colon \cE \to \P^n, 
$$ 
birational to the pullback of 
$\xi^{\P^1}$ with respect to $p \colon \P^n \dto \P^1$.
Let $\xi_d^{\P^n}$ and $\xi_d \colon J_d \to B$ be the pullbacks of $\xi^{\P^n}$ under $F_d$ and $F_d\circ g$ respectively. 
By construction, 
these are elliptic fibrations that satisfy 
(3) and whose $j$-invariant map
has Stein degree $\ge d$.
The latter implies (2) once (1) is proved.

Now we prove (1).
It suffices to prove the statement for
$\xi_d \times_\k K$,
where $K$ is an algebraically closed extension of $\k$ 
containing $\C$.
Fix a Weierstrass model
(see~\cite[Definition 1.1 and Theorem 2.1]{NakayamaWM})
$$\zeta^{\P^n} \colon W(\cL,a,b) \to \P^n, 
\text{ with }\cL \in \Pic(\P^n), \; a \in \Gamma((\cL^\vee)^{\otimes 4}), \;
\; b \in \Gamma((\cL^\vee)^{\otimes 6})
$$ 
birational to $\xi^{\P^n} \times_\k K$.
Let $\zeta_d^{\P^n}$ and $\zeta_d \colon J_d \to B$ be the pullbacks of $\zeta^{\P^n}$ under $F_d$ and $F_d\circ g$ respectively. 
Since $\xi^{\P^1}$ is not isotrivial, so is $\zeta^{\P^n}$,
which implies $\cL \not\simeq \cO_{\P^n}$.
As $a$
is a nonzero section of $(\cL^\vee)^{\otimes 4}$ (again because $\zeta^{\P^n}$ is not isotrivial)
the line bundle $\cL^\vee$ is ample.
We can assume
that $d$ is large enough so that $\go_{\P^n} \otimes F_d^*\cL^\vee$ is ample. As $\zeta_d^{\P^n}$ is the Weierstrass model
$$W(F_d^*\cL, F_d^*a, F_d^*b) \to \P^n,$$
if follows from the canonical bundle formula for Weierstrass fibrations~\cite[(1.2)(2)]{NakayamaWM}
that for such $d$,
property (1) holds for $\zeta_d^{\P^n}$, and hence also for $\zeta_d$ because $g$ is finite. 
As $\xi_d \times_\k K$ is birational to $\zeta_d$, (1) is proved.

Finally we prove (4).
Since $\xi_d$ is an Iitaka fibration of $J_d$, we have $\Bir(J_{d,\ol{\k}}) = \Bir(\xi_{d,\ol{\k}})$.
For any $\phi \in \Bir(\xi_{d,\ol{\k}})$, 
the map $\gs \in \Bir(B_{\ol{\k}})$ induced by $\phi$
satisfies $\sigma \in \Aut(\ol{\k}(B)/\ol{\k}(j))$. However, the latter group is trivial by Lemma \ref{lem:B-diagram}, so $\sigma = \id_B$.
\end{proof}

\begin{corollary}\label{cor:P3-kB}
Let $\k$ be a field of characteristic zero and $B$ a geometrically integral variety over $\k$ of positive dimension.
Take any $d \ge 1$.
There exist $C$ and $C'$ 
which are 
torsors over  the generic fiber
$J_{d,\k(B)}$ of the elliptic fibration $\xi_d$
from Proposition \ref{prop:constr-J}
and
a birational map $\phi \in \Bir(\P^3_{\k(B)})$
such that 
$$c(\phi_{\ol{\k}(B)}) = ([C_{\ol{\k}(B)}] - [C'_{\ol{\k}(B)}]) \cdot [\P^1_{\ol{\k}(B)}] \ne 0.$$
\end{corollary}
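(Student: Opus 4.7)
The plan is to apply the construction from \cite[Theorem 1.2(1)]{BirMot}, which rests on \cite{ShinderZhang}, verbatim over the field $F = \k(B)$ with the elliptic curve $E = J_{d,F}$, i.e.\ the generic fibre of $\xi_d$. The only hypothesis needed to run that construction is a full rational $5$-torsion subgroup $(\Z/5)^{\oplus 2} \hookrightarrow E(F)$, which is exactly what Proposition~\ref{prop:constr-J}(3) supplies. Out of this come two degree-$5$ torsors $C, C'$ of $E$, each embedded in $\P^3_F$ via a degree-$5$ linear system, and a birational map $\phi \in \Bir(\P^3_F)$ whose graph is resolved by blowing up $C$ on one side and $C'$ on the other. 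By definition of $c$ one then has
\[
c(\phi) \ = \ ([C] - [C']) \cdot [\P^1_F] \ \in \ \Burn_*(F),
\]
and base-changing along $F \hookrightarrow \ol{\k}(B)$ gives the displayed formula in the statement.

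The content of the corollary is the nonvanishing of this element in $\Burn_*(\ol{\k}(B))$, which is equivalent to $C_{\ol{\k}(B)}$ and $C'_{\ol{\k}(B)}$ being non-isomorphic as smooth projective genus-$1$ curves. Being torsors over the elliptic curve $E_{\ol{\k}(B)}$, they are isomorphic as curves if and only if their Weil--Ch\^atelet classes in $H^1\bigl(\ol{\k}(B), E_{\ol{\k}(B)}\bigr)$ coincide up to the action of $\Aut\bigl(E_{\ol{\k}(B)}\bigr)$. At this step Proposition~\ref{prop:constr-J}(1) enters crucially: Kodaira codimension $1$ together with the Iitaka-fibration assertion forces the $j$-invariant of $E_{\ol{\k}(B)}$ to be a nonconstant element of $\ol{\k}(B)$, and in particular different from $0$ and $1728$. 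Consequently $\Aut(E_{\ol{\k}(B)}) = \{\pm 1\}$, and it suffices to establish that $[C] \neq \pm [C']$ in $H^1\bigl(\ol{\k}(B), E_{\ol{\k}(B)}\bigr)$.

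For this last step I would combine the $5$-descent exact sequence
\[
0 \longrightarrow E(\ol{\k}(B))/5 \longrightarrow H^1\bigl(\ol{\k}(B), E[5]\bigr) \longrightarrow H^1\bigl(\ol{\k}(B), E_{\ol{\k}(B)}\bigr)[5] \longrightarrow 0
\]
with the explicit description of the Shinder--Zhang torsors as lifts of two independent generators of $E(F)[5]$. These classes encode the geometric structure of $E$ (its $5$-torsion viewed as a sheaf on $B_{\ol{\k}}$) rather than arithmetic of the constants $\k$, and so they are not pulled back from $H^1(\Gal(\ol{\k}/\k), -)$; via the Hochschild--Serre spectral sequence for $\ol{\k}/\k$ this is exactly what is needed for them to survive the base change from $F$ to $\ol{\k}(B)$. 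I expect the main obstacle to be precisely this last verification, namely certifying that the difference between $C$ and $C'$ does not evaporate upon geometric base change. Once that is in hand, the corollary is immediate from the additivity of $c$ (Lemma~\ref{lem:new-c-additive}) and the explicit formula above.
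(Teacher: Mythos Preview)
Your overall strategy matches the paper's: set $E = J_{d,\k(B)}$, invoke Proposition~\ref{prop:constr-J}(3) for full rational $5$-torsion, run the construction from \cite[\S 3.2--3.3]{BirMot} to obtain $\phi$, and reduce nonvanishing over $\ol\k(B)$ to $C_{\ol\k(B)} \not\simeq C'_{\ol\k(B)}$ using $\Aut(E_{\ol\k(B)}) = \{\pm 1\}$ (the paper cites \cite[Lemma~2.7]{ShinderZhang} for this step). However, your description of $C$ and $C'$ as ``lifts of two independent generators of $E(F)[5]$'' is off: in the actual construction one picks a \emph{single} class $\alpha \in H^1(\k(B),E)[5]$, lets $C$ be the corresponding torsor, and sets $C' = \Pic^2(C)$, so $[C'] = 2\alpha$. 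The payoff is that once $\alpha_{\ol\k(B)} \ne 0$, one gets $\alpha_{\ol\k(B)} \ne \pm 2\alpha_{\ol\k(B)}$ for free (since $1 \not\equiv \pm 2 \pmod 5$), and the entire nonvanishing question collapses to the single condition that $\alpha$ survive base change to $\ol\k(B)$.

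That survival is the genuine gap, and your Hochschild--Serre heuristic does not close it. Nothing in the \cite{BirMot} construction singles out a class avoiding the inflation image of $H^1(\Gal(\ol\k/\k), E(\ol\k(B)))$; one must \emph{build} such an $\alpha$ by hand, and this is precisely the content of Proposition~\ref{prop:E-torsors} in the appendix. Since $E[5] \simeq (\Z/5)^{\oplus 2}$ as a Galois module, $H^1(\k(B),E[5]) \simeq \Hom(G_{\k(B)},\Z/5)^{\oplus 2}$, and the problem reduces (Lemma~\ref{lem:Galois}) to producing characters $G_{\k(B)} \to \Z/5$ with nonzero restriction to $G_{\ol\k(B)}$. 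The paper does this geometrically: fix a $\Z/5$-cover $\beta\colon C_0 \to \P^1$ over $\k$ and pull it back along varying pencils $B' \dto \P^1$ chosen so that the resulting branch divisors in $B'$ are pairwise distinct; the covers then remain nonisomorphic over $\ol\k$ because their branch loci differ. Without this explicit construction (or an equivalent one), the proof is incomplete at exactly the point you yourself flagged as the main obstacle.
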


\begin{proof}
We can assume that  $B$ smooth and projective.
We  argue as in the proof of~\cite[Lemma 3.8]{BirMot}.
To simplify the notation,
let us write 
$E = J_{d,\k(B)}$ for a fixed $d \ge 1$.

By Proposition~\ref{prop:constr-J}, $E$ satisfies the assumptions in Proposition \ref{prop:E-torsors} with $p = 5$.
Take the $E$-torsor $C$ constructed in
Proposition \ref{prop:E-torsors}
and let $\alpha \in H^1(\k(B), E)[5]$ be the corresponding class.
Let $C' := \Pic^2(C)$, i.e. we take
the $E$-torsor corresponding to $2\alpha$.
Since $\xi_d$ is not isotrivial, 
in particular its $j$-invariant is not constant $1728$, $C_{\ol{\k}(B)}$ and $C'_{\ol{\k}(B)}$ are not isomorphic as curves (not just as $E$-torsors) by \cite[Lemma 2.7]{ShinderZhang}.    

By \cite[\S 3.2]{BirMot} there exists a birational map $\phi \in \Bir(\P^3_{\k(B)})$
such that
$$c(\phi_{\ol{\k}}) = ([C_{\ol{\k}(B)}] - [C'_{\ol{\k}(B)}]) \cdot [\P^1_{\ol{\k}(B)}] \in \Burn_2(\ol{\k}(B)),$$ 
which is nonzero because $C_{\ol{\k}(B)}$ and $C'_{\ol{\k}(B)}$ are not stably birational.
\end{proof}

\begin{proof}[Proof of Theorem \ref{thm-unbounded-c-B}]

We can assume that $X = B \times \P^3$ with $B$ smooth and projective.
Fix $d \ge 1$ and consider $\xi_d \colon J \cnec J_d \to B$ defined in Proposition \ref{prop:constr-J}. 

Let $\pi\colon \P^3 \times  B \to B$
be the second projection and
 $\phi'$ be the same birational map as $\phi$ from Corollary \ref{cor:P3-kB}, 
but considered in $\Bir(\P^3 \times B / B)$.
We have a commutative diagram
\[
\xymatrix{\P^3 \ar[dr] \times B \ar@{-->}[rr]^{\phi'} & & \P^3 \times B \ar[dl] \\ & B & }
\]
which restricts to $\phi$ on the generic fiber $\k(B)$.

Let $Y$ and $Y'$  be smooth projective  models of $C$ and $C'$ over $B$.
By Corollary 
\ref{cor:vanishingII} we have $c_\ver(\phi') = 0$
so that
\[
c(\phi')  = c_\hor(\phi')  = [\P^1 \times Y] - [\P^1 \times Y'].
\]

Let us show that $\MRC(c(\phi'_{\ol{\k}})) \ne 0$. 
There is a dominant morphism $C \to J^0(C)$ over $\k(B)$ (e.g. multiplication by $5$),
hence $Y$ dominates $J$. In particular using
Proposition~\ref{prop:constr-J}(1) we have
\[
\dim Y - 1 \ge \gk(Y) \ge \gk(J) = \dim J - 1,
\]
so necessarily $Y \to B$ and 
$Y_{\ol{\k}} \to B_{\ol{\k}}$ are the Iitaka fibrations~\cite[Theorem 6.11]{UenoClassAlgVar}.
The same holds for $Y'$.
In particular, both $Y$ and $Y'$ are not uniruled, and if 
$\P^1_{\ol{\k}} \times Y_{\ol{\k}}$ and $\P^1_{\ol{\k}} \times Y'_{\ol{\k}}$ are birational,
then we have a birational map on the MRC bases
$\psi \colon Y_{\ol{\k}} \dto Y'_{\ol{\k}}$, 
which descends to $\gs \in \Bir(B_{\ol{\k}})$ through the Iitaka fibrations: 
    \[\xymatrix{
    Y_{\ol{\k}} \ar[d] \ar@{-->}[r]^\psi & Y'_{\ol{\k}} \ar[d]\\
    B_{\ol{\k}} \ar@{-->}[r]^\sigma
    & B_{\ol{\k}}. \\
    }\]
    It also induces a birational self-map $J^0(\psi) \in \Bir(\xi_{\ol{\k}})$ which descends to $\sigma \in \Bir(B_{\ol{\k}})$. 
    Thus $\sigma$ is the identity by Proposition \ref{prop:constr-J}(4), so the generic fibers $C$ and $C'$ are isomorphic over $\ol{\k}(B)$,
    which contradicts Corollary~\ref{cor:P3-kB}. This shows that
    \[
    \MRC(c(\phi'_{\ol{\k}})) = [Y_{\ol{\k}}] - [Y'_{\ol{\k}}] \ne 0.
    \]

Finally, when we increase $d \ge 1$, this construction produces infinitely many classes in the image $\MRC(c(\Bir(\P^3 \times B)))$ and this subgroup is geometrically unbounded
by 
Proposition~\ref{pro-bdSdegell}
because the Iitaka--Stein degree of $Y_{\ol{\k}}$,
which is equal to that of $J_d$ (since their $j$-maps are the same), is unbounded in $d$ by 
Proposition~\ref{prop:constr-J}(2).
\end{proof}

\subsection{Applications}

We start with an immediate consequence of 
Theorem \ref{thm-unbounded-c-B} for abelianizations of birational automorphism groups.
We assume that $\k$ is a field of characteristic zero. 

\begin{corollary}\label{cor:abelianizations}
If $B$ is any geometrically integral variety, 
then for any $k \ge 3$ 
the canonical morphism between abelianizations
$
\Bir(\P^k \times B)^\ab \to
\Bir(\P^{k+1} \times B)^\ab 
$
is not surjective.
\end{corollary}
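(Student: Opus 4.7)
The plan is to build a homomorphism from $\Bir(\P^{k+1} \times B)^\ab$ to an abelian group that vanishes on the image of $\Bir(\P^k \times B)^\ab$ but is nonzero on the full abelianization. The unboundedness granted by Theorem~\ref{thm-unbounded-c-B} lives in the top degree $\Burn_{n-2}(\k)$ of the filtered target $\Burn_{\le n-2}(\k)$ (with $n$ the dimension of the ambient variety), so the natural candidate is the composition of $c$, $\MRC$, and the projection onto that top-degree summand. Setting $n \cnec (k+1) + \dim B$, I will work with
\[
\chi \cnec \pr \circ \MRC \circ c \colon \Bir(\P^{k+1} \times B) \to \Burn_{n-2}(\k),
\]
where $\pr$ denotes the projection $\Burn_{\le n-2}(\k) \to \Burn_{n-2}(\k)$ onto the top-degree summand. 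Each factor is a homomorphism of abelian groups, so $\chi$ factors through the abelianization. Since $\P^{k+1} \times B$ is birational to $\P^3 \times B'$ with $B' \cnec \P^{k-2} \times B$ of positive dimension (using $k \ge 3$), Theorem~\ref{thm-unbounded-c-B} supplies a geometrically unbounded, hence nonzero, subgroup of $\Burn_{n-2}(\k)$ inside the image of $\chi$.

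The next step is to check that $\chi$ annihilates the image of the canonical inclusion $\Bir(\P^k \times B) \hookrightarrow \Bir(\P^{k+1} \times B)$ coming from $\P^{k+1} \sim \P^k \times \P^1$, which sends $\phi$ to $\phi \times \id_{\P^1}$. Using a regular resolution of $\phi$, the exceptional divisors of $\phi \times \id_{\P^1}$ are exactly the products $D \times \P^1$ with $D$ exceptional for $\phi$, so $c(\phi \times \id_{\P^1}) = [\P^1] \cdot c(\phi)$. Each such $D$ is ruled by Example~\ref{ex-MRCd-2}, hence the second projection $\P^1 \times D \to B_D$ onto the MRC base $B_D$ of $D$ is again an MRC fibration, yielding $\MRC([\P^1 \times D]) = \MRC([D])$. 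Ruledness of $D$ forces $\dim B_D \le \dim D - 1 = n - 3$, so $\MRC(c(\phi \times \id_{\P^1})) \in \Burn_{\le n-3}(\k)$ and $\pr$ kills it. If the abelianization map were surjective then $\chi$ would vanish identically, contradicting the preceding nonvanishing.

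Essentially all of the real work is already packaged into Theorem~\ref{thm-unbounded-c-B}; what remains is the filtered analysis of $\phi \mapsto \phi \times \id_{\P^1}$, which is straightforward. The one point requiring a brief justification is excluding ``graph-type'' prime divisors of $X_1 \times \P^1$ (those dominating both factors) from the exceptional locus, which follows because $\phi$ has only finitely many exceptional divisors, so no continuously varying family of divisors in $X_1$ can be collapsed by $\phi$.
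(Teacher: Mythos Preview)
Your proof is correct and follows essentially the same approach as the paper: both arguments compose $c$ with $\MRC$ and observe that, by Theorem~\ref{thm-unbounded-c-B} applied to $\P^3 \times (\P^{k-2} \times B)$, the image for $\Bir(\P^{k+1}\times B)$ hits the top degree $\Burn_{(k+1)+\dim B - 2}(\k)$, whereas the image of $\Bir(\P^k\times B)$ lands one step lower by Example~\ref{ex-MRCd-2} and the identity $c(\phi\times\id_{\P^1}) = [\P^1]\cdot c(\phi)$. The paper packages this as a commutative square with middle vertical map $\times[\P^1]$, while you phrase it as a single homomorphism $\chi$ with an explicit kernel computation; your added justification that graph-type divisors cannot be exceptional for $\phi\times\id_{\P^1}$ (which the paper leaves implicit) is a welcome clarification, though the cleanest way to see it is that $\phi\times\id_{\P^1}$ is an isomorphism on $U\times\P^1$ for $U$ the regular locus of $\phi$, and any such divisor meets $U\times\P^1$.
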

\begin{proof}
Let $n = \dim(B) \ge 0$.
Let 
$$c_k \colon \Bir(\P^k \times B)^\ab \to  \Burn_{k+n-1}(\k)$$
be the homomorphism induced by the motivic invariant $c$.
%Note that $\MRC(\Ima (c_k)) \subset  \Burn_{\le k+n-2}(\k)$
By Example~\ref{ex-MRCd-2},
we have a commutative diagram
\[
\xymatrix{
\Bir(\P^k \times B)^\ab \ar[d] \ar[r]^-{c_k} & \Image(c_k) \ar[d]^{\times \P^1} 
\ar[r]^{\MRC\;\;\;\;\;\;\;\;\;\;} & {\Burn_{\le k+n-2}(\k)}  \ar@{^{(}->}[d]\\
\Bir(\P^{k+1} \times B)^\ab \ar[r]^-{c_{k+1}}  & \Image(c_{k+1}) \ar[r]^{\MRC\;\;\;\;\;\;} & {\Burn_{\le k+n-1}(\k)} \\
}
\]

By   Theorem \ref{thm-unbounded-c-B}, applied to $\P^3 \times (\P^{k-2} \times B)$,
the image of $c_{k+1}$ contains elements whose base of the MRC fibration has dimension $k+n-1$.
Thus the left vertical map is not surjective.
\end{proof}

If $\pi\colon X \dto S$ is a rational dominant map, then we refer to the subgroup $\Bir(\pi) \subset \Bir(X)$ as birational maps preserving $\pi$.
For example, we can consider a linear projection $\pi\colon \P^n \dto \P^{n-1}$
and maps 
$\phi$
fitting 
into commutative diagram
    \[\xymatrix{
    \P^n \ar@{-->}[d]^\pi \ar@{-->}[r]^\phi & \P^n \ar@{-->}[d]^\pi \\
    \P^{n-1} \ar@{-->}[r]^\sigma
    & \P^{n-1} \\
    }\]
which are called
\emph{Jonqui\`eres map} \cite{PanSimis}.
Pan and Simis have asked whether Cremona groups can be generated by linear automorphisms and de Jonqui\`eres maps \cite[p. 925]{PanSimis}.
This has been answered in~\cite[Theorem C]{BLZ} in the negative.
We have the following more general statement.

\begin{corollary}\label{cor:gener-CB-SB}
Let $X$ be birational to  $\P^3 \times B$ for a positive-dimensional geometrically integral variety $B$. 
Then $\Bir(X)$
is not generated by pseudo-regularizable maps and birational maps preserving a 
conic bundle or a rational surface fibration.
\end{corollary}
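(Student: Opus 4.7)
The plan is to use the composition
\[
\Phi \cnec \MRC \circ c \colon \Bir(X) \longrightarrow \Burn_{\le n-1}(\k),
\qquad n \cnec \dim X \ge 4,
\]
which is a group homomorphism by Lemma \ref{lem:new-c-additive} and the additivity of $\MRC$. The key observation will be a dichotomy: on the one hand, Theorem \ref{thm-unbounded-c-B} produces elements of $\Phi(\Bir(X))$ whose projection to the top graded piece $\Burn_{n-2}(\k)$ is nonzero; on the other hand, each of the three listed types of generators contributes only to $\Burn_{\le n-3}(\k)$.

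First I would verify the upper bound for each generator type. For a pseudo-regularizable map $\phi$ one has $c(\phi) = 0$ by \cite[Theorem 1.2]{BirMot}, hence $\Phi(\phi) = 0$. For $\phi \in \Bir(\pi)$ with $\pi \colon X \dto S$ a conic bundle, the relative dimension is one and the geometric generic fiber is $\P^1$, so Vanishing I (Proposition \ref{prop:vanishingI}) yields $c_{\hor}(\phi) = 0$, while Corollary \ref{cor:ver-MRC} applied with relative dimension $d = 1$ gives $\MRC(c_{\ver}(\phi)) \in \Burn_{\le n-3}(\k)$. For $\phi \in \Bir(\pi')$ with $\pi' \colon X \dto S'$ a rational surface fibration, Vanishing I (this is where $\chr(\k) = 0$ enters) again kills the horizontal part, and Corollary \ref{cor:ver-MRC} with $d = 2$ yields $\MRC(c_{\ver}(\phi)) \in \Burn_{\le n-4}(\k) \subset \Burn_{\le n-3}(\k)$.

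Let $H \le \Bir(X)$ denote the subgroup generated by all maps of the three listed types. Since $\Phi$ is a homomorphism and $\Burn_{\le n-3}(\k)$ is a subgroup of $\Burn_{\le n-1}(\k)$, the previous paragraph forces $\Phi(H) \subset \Burn_{\le n-3}(\k)$. By contrast, Theorem \ref{thm-unbounded-c-B}, applied to $X$ (birational to $\P^3 \times B$ with $\dim B > 0$), yields elements of $\Phi(\Bir(X))$ with nonzero component in the top piece $\Burn_{n-2}(\k)$, and such elements cannot belong to $\Burn_{\le n-3}(\k)$. Therefore $H \ne \Bir(X)$, which is the desired statement.

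There is no serious obstacle: the only items that require a moment's thought are that conics and rational surfaces have rationally connected geometric generic fibers (so that the hypotheses of Corollary \ref{cor:ver-MRC} are met) and that the three-way split of $c$ lines up as claimed. The structural content is that Vanishing I together with Corollary \ref{cor:ver-MRC} forces any fibration-preserving (or pseudo-regularizable) element to drop at least one dimension in the MRC base compared with the maximum realized by Theorem \ref{thm-unbounded-c-B}.
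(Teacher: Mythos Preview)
Your proof is correct and follows essentially the same approach as the paper: apply $\MRC \circ c$, use Theorem~\ref{thm-unbounded-c-B} to produce classes landing in $\Burn_{n-2}(\k)$, and use Vanishing~I together with Corollary~\ref{cor:ver-MRC} (plus the vanishing of $c$ on pseudo-regularizable maps) to bound the three generator types into $\Burn_{\le n-3}(\k)$. One small citation fix: the vanishing of $c$ on pseudo-regularizable maps is \cite[Lemma~4.3]{BirMot}, not \cite[Theorem~1.2]{BirMot}.
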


Here by a conic bundle (resp. rational surface fibration) structure we mean a rational dominant map $\pi\colon X \dto B$ whose generic fiber is a conic (resp. a geometrically rational surface).

\begin{proof}
By Theorem 
\ref{thm-unbounded-c-B}, $\MRC(c(\Bir(X)))$ contains nonzero classes of dimension $\dim(X) - 2$. 
The invariant $c$ vanishes on pseudo-regularizable maps by \cite[Lemma 4.3]{BirMot}.
Let $\pi\colon X \dto B$ be a conic bundle or a rational surface fibration
and $\phi \in \Bir(\pi)$.
Then $c_\hor(\phi) = 0$ by Proposition~\ref{prop:vanishingI},
and $\MRC(c_\ver(\phi))$ is generated by classes of dimension $\le \dim(X) - 3$ by Corollary~\ref{cor:ver-MRC}. Thus all these types of elements can not generated $\Bir(X)$.
\end{proof}

\begin{example}
In \cite{BSY} the authors construct nontrivial homomorphisms from Cremona groups, based on type II links between Severi--Brauer surface fibrations \cite[Theorem 6.2.4]{BSY}. By Corollary \ref{cor:gener-CB-SB} these elements do not generate the respective groups of birational self-maps.
\end{example}

\appendix
\section{Constructing elliptic torsors of prescribed prime index}

The following result produces torsors which we use to construct birational self-maps of $\P^3_{\k(B)}$ in Corollary \ref{cor:P3-kB};
the construction of torsors of prescribed index is a variation on a theme by Lang--Tate \cite[Theorem 7]{Lang-Tate} and Clark--Lacy \cite[Theorem 1.6]{ClarkLacy}.
Recall that a curve $C$ of genus $1$  has  index $p$, if $C$ has no rational points and admits closed points of degree $p$.

\begin{prop}\label{prop:E-torsors} 
Let $\k$ be a field of characteristic zero.
Let $B$ be a geometrically integral $\k$-variety of dimension $n > 0$. 
    Let $p$ be a prime number and 
    let $E$ be an elliptic curve over $\k(B)$
    whose $j$-invariant is not in $\k$.
    Suppose that $\Z/p \subset E(\k(B))$.
    Then there exist infinitely many $E$-torsors $\{C_i\}_{i \in \N}$
    such that $C_{i, \ol{\k}(B)}$ are pairwise non isomorphic (as $\ol{\k}(B)$-varieties) and that 
    each $C_{i, \ol{\k}(B)}$ has index $p$.
\end{prop}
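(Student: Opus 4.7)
The plan is to build the torsors by Kummer-theoretic means, exploiting the full $p$-torsion of $E$ to produce an explicit degree-$p$ quotient of $E$.

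\textbf{Setup and index bound.} Write $F = \k(B)$. Since $E[p] = (\Z/p)^{\oplus 2} \subset E(F)$ and the Weil pairing $E[p] \times E[p] \to \mu_p$ is Galois-equivariant and non-degenerate, we deduce $\mu_p \subset F$; as $B$ is geometrically integral in characteristic zero, $\k$ is algebraically closed in $F$, so in fact $\mu_p \subset \k$. Choose a splitting $E[p] = H_1 \oplus H_2$ with each $H_i \cong \mu_p$ as Galois modules, and let $\phi \colon E \to E' := E/H_1$ be the associated degree-$p$ isogeny. The long exact cohomology sequence of $0 \to H_1 \to E \xrightarrow{\phi} E' \to 0$, combined with Kummer theory identifying $H^1(F, H_1) = F^*/(F^*)^p$, yields
\begin{equation*}
0 \to E'(F)/\phi(E(F)) \to F^*/(F^*)^p \xrightarrow{\iota} H^1(F, E).
\end{equation*}
For any $a \in F^*$, set $C_a := \iota([a])$. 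By exactness the $E'$-torsor $\phi_* C_a$ is trivial, so the canonical degree-$p$ morphism $C_a \to \phi_* C_a$ becomes a degree-$p$ morphism $C_a \to E'$ over $F$. Pulling back the point $0 \in E'(F)$ produces an $F$-rational divisor of degree $p$ on $C_a$, whence the index of $C_a$ divides $p$; if additionally $[C_a] \ne 0$, the index equals $p$ (since $p$ is prime). The verbatim argument over $\ol{\k}(B)$ shows that $C_{a, \ol{\k}(B)}$ has index $p$ whenever its class in $H^1(\ol{\k}(B), E)$ is nonzero.

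\textbf{Infinite family.} The non-isotriviality $j(E) \in F \setminus \k$ persists after scalar extension, because $\k$ algebraically closed in $F$ forces $j(E) \notin \ol{\k}$ inside $\ol{\k}(B)$. Applying the Lang--N\'eron theorem to $E/F$ and $E_{\ol{\k}(B)}/\ol{\k}(B)$ yields finite generation of both Mordell--Weil groups, so $E'(F)/\phi(E(F))$ and $E'(\ol{\k}(B))/\phi(E(\ol{\k}(B)))$ are finite (as $\phi$ is an isogeny). Since $B$ has positive dimension, a smooth projective model of $B_{\ol{\k}}$ carries infinitely many prime divisors $D_i$; choose $f_i \in F^*$ with $v_{D_i}(f_i) = 1$ and $v_{D_j}(f_i) = 0$ for $j \ne i$. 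Because a nontrivial $\F_p$-linear combination $\sum c_i [f_i]$ has valuation $c_{i_0} \not\equiv 0 \pmod p$ at the first index with $c_{i_0} \ne 0$, the classes $[f_i]$ remain $\F_p$-linearly independent in $\ol{\k}(B)^*/(\ol{\k}(B)^*)^p$. Discarding the finitely many $f_i$ whose images fall into $\ker(\iota_{\ol{\k}(B)})$ leaves infinitely many $C_{f_i, \ol{\k}(B)}$ of index $p$.

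\textbf{Pairwise non-isomorphism and main obstacle.} Since $j(E_{\ol{\k}(B)}) \notin \ol{\k}$, we have $\Aut(E_{\ol{\k}(B)}) = \{\pm 1\}$, so two $E$-torsors are isomorphic as $\ol{\k}(B)$-curves iff their classes in $H^1(\ol{\k}(B), E)$ differ by $\pm 1$. The $\pm 1$-orbits on our $\F_p$-independent family have size at most two, so infinitely many pairwise non-isomorphic $C_{f_i, \ol{\k}(B)}$ remain. The main obstacle is the index bound, which crucially relies on the existence of the degree-$p$ quotient $E \to E/H_1$ made possible by the full $p$-torsion hypothesis (and the identification $H_1 \cong \mu_p$ forced by the Weil pairing); controlling $\ker(\iota_{\ol{\k}(B)})$ then rests on the Lang--N\'eron theorem, which is why the non-isotriviality $j(E) \notin \k$ is essential throughout.
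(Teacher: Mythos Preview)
Your proof is correct and follows a route genuinely different from the paper's. The paper uses the multiplication-by-$p$ sequence and identifies $H^1(F,E[p])\cong\Hom(G_F,\Z/p)^{\oplus 2}$; infinitude of the image over $\ol\k(B)$ is then supplied by a separate Galois-theoretic lemma constructing infinitely many $\Z/p$-covers of $B$ that remain nontrivial geometrically, and the index bound comes from splitting each class by the corresponding degree-$p$ field extension. You instead pass to the isogeny $E\to E'=E/H_1$, use the Weil pairing to force $\mu_p\subset\k$ so that $H^1(F,H_1)=F^*/(F^*)^p$, and obtain both the index bound (via the degree-$p$ map $C_a\to E'$ and the pullback of a rational point) and infinitude (via valuations at prime divisors) in a self-contained way that avoids the auxiliary lemma entirely. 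Your non-isomorphism step via $\Aut(E_{\ol\k(B)})=\{\pm1\}$ is also sharper than the paper's appeal to finiteness of torsor structures on a fixed genus-one curve. Two cosmetic points worth tightening: the simultaneous conditions $v_{D_j}(f_i)=0$ for all $j\ne i$ impose infinitely many constraints at once and should be arranged inductively (choose $D_i$ off the supports of $f_1,\dots,f_{i-1}$, then $f_i$ with prescribed valuations at $D_1,\dots,D_i$); and after discarding the $f_i$ in $\ker\iota_{\ol\k(B)}$ you should remark that the remaining $\iota([f_i])$ are infinitely many \emph{distinct} classes (differences landing in the finite kernel are finite in number) before invoking the $\pm1$-orbit count.
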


\begin{proof}
Recall that isomorphism classes of $E$-torsors
are parametrized by elements of the Galois cohomology group $H^1({\k}(B),E)$.
Consider the maps between the short exact sequences induced by the Kummer sequence
  $$
\xymatrix{
0 \ar[r] & \frac{E(\k(B))}{pE(\k(B))} \ar[d]  \ar[r] &
H^1(\k(B),E[p])  \ar[d]  \ar[r] & H^1(\k(B),E)[p] \ar[d]  \ar[r] & 0 \\
0 \ar[r] & \frac{E(\ol{\k}(B))}{pE(\ol{\k}(B))} \ar[r] &
H^1(\ol{\k}(B),E[p])   \ar[r]^{\gb} & H^1(\ol{\k}(B),E)[p]  \ar[r] & 0
}
$$
Since the $j$-invariant $j_E \in \k(B)$ of $E$ is not in $\k$ and $B$ is geometrically integral,
so that $\ol{\k} \cap \k(B) = \k$,
$j_E$ is not in $\ol{\k}$ neither.
So both $\frac{E(\k(B))}{pE(\k(B))}$ and $\frac{E(\ol{\k}(B))}{pE(\ol{\k}(B))}$ are finite (see e.g.~\cite[Example 2.2]{Bonrad-K/k}). 

Let $Q$ denote the quotient of $E[p]$ by $\Z/p$ as group schemes. 
Then we have a commutative diagram with exact rows:
 $$
\xymatrix{
Q(\k(B)) \ar[d]  \ar[r] &
H^1(\k(B),\Z/p)  \ar[d]  \ar[rd]^{\delta} \ar[r] & H^1(\k(B),E[p]) \ar[d]  \\
Q(\ol{\k}(B)) \ar[r] &
H^1(\ol{\k}(B),\Z/p)   \ar[r] & H^1(\ol{\k}(B),E[p]) 
}
$$
The vertical arrow in the middle is isomorphic to
$$\Hom(G_{\k(B)}, \Z/p) \to \Hom(G_{\ol{\k}(B)}, \Z/p),$$
where $G_\F$ denotes the absolute Galois group of a field $\F$.
By Lemma~\ref{lem:Galois}, the image of this map is infinite.
It follows that $\gd$ has infinite image 
as $Q(\ol{\k}(B))$ is finite.
Since $\ker(\gb) \simeq \frac{E(\ol{\k}(B))}{pE(\ol{\k}(B))}$,
which is finite,
it follows that the image of the composition
$$\Hom(G_{\k(B)}, \Z/p) \simeq H^1(\k(B),\Z/p)
\overset{\delta}\to
%\to H^1(\k(B),E[p]) \to 
H^1(\ol{\k}(B),E[p]) \xto{\gb} H^1(\ol{\k}(B),E)[p]$$
is infinite, which gives rise to infinitely many $E$-torsors $C_i$ which are still non isomorphic as $E_{\ol{\k}(B)}$-torsors.
There are only finitely many $E_{\ol{\k}(B)}$-torsor structures on a fixed curve of genus $1$ \cite[Exercise 10.4]{SilvermanAEC},
hence after removing repetitions we can assume that $C_{i, \ol{\k(B)}}$ are pairwise non isomorphic curves.

Finally let us show that every nonzero class
in the image $\Image(H^1(\ol{\k}(B),E[p])   \to H^1(\ol{\k}(B),E)[p])$ has index $p$, namely it splits by some degree $p$ extension. 
Take any element
$\alpha \in \Hom(G_{\ol{\k}(B)}, \Z/p)$ with nonzero image in $H^1(\ol{\k}(B),E)[p]$. 
By Galois theory $\alpha$ defines a degree $p$ extension $L/\ol{\k}(B)$ and by construction $\alpha_L = 0$. Thus the same holds for the image of $\alpha$ in $H^1(\ol{\k}(B),E)[p]$.
\end{proof}

The following lemma was used in the proof of Proposition \ref{prop:E-torsors}. It is a variant of 
the inverse Galois problem for $\Z/p$.

\begin{lem}\label{lem:Galois}
The image of the map
\begin{equation}\label{map-GalRes}
\Hom(G_{\k(B)}, \Z/p) \to \Hom(G_{\ol{\k}(B)}, \Z/p)  \end{equation}
is infinite.    
\end{lem}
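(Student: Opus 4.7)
The plan is to reduce the claim to a statement about Kummer extensions, and then exhibit infinitely many $\F_p$-linearly independent classes by hand. First, I would observe that the hypotheses of Proposition \ref{prop:E-torsors} force $\mu_p \subset \k$: since $(\Z/p)^{\oplus 2} \subset E(\k(B))$ is Galois-stable, the absolute Galois group $G_{\k(B)}$ acts trivially on $E[p]$, and by Galois equivariance and surjectivity of the Weil pairing $E[p] \times E[p] \to \mu_p$, we obtain $\mu_p \subset \k(B)$; combined with $\ol{\k} \cap \k(B) = \k$ (from geometric integrality of $B$), this gives $\mu_p \subset \k$. After fixing an identification $\Z/p \simeq \mu_p$, Kummer theory identifies the map in the lemma with the inclusion-induced map
\[
\k(B)^\times/(\k(B)^\times)^p \longrightarrow \ol{\k}(B)^\times/(\ol{\k}(B)^\times)^p.
\]

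Next, I would pick a nonconstant $t \in \k(B)^\times$, which exists because $\dim B \geq 1$. Replacing $B$ by a smooth projective model (available by resolution of singularities in characteristic zero) and resolving indeterminacies, I may assume $t$ extends to a surjective morphism $t \colon B \to \P^1_\k$. By generic smoothness in characteristic zero, there is a dense Zariski open $U \subset \P^1_\k$ over which $t$ is smooth; since $\k$ is infinite, $U(\k)$ is infinite, so I may choose an infinite sequence of distinct elements $c_1, c_2, \ldots \in U(\k)$.

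The key step is to show that the images of $\{t - c_i\}_i$ in $\ol{\k}(B)^\times/(\ol{\k}(B)^\times)^p$ are $\F_p$-linearly independent, which at once gives infiniteness. For each $i$, the smoothness of $t$ over $c_i$ implies that $t^{-1}(c_i)_{\ol{\k}}$ is smooth, and in particular reduced; picking any irreducible component $D_i$ of this fiber, the corresponding divisorial valuation $v_{D_i}$ on $\ol{\k}(B)$ satisfies $v_{D_i}(t - c_i) = 1$ (because $t - c_i$ is a local parameter at a general point of $D_i$) and $v_{D_i}(t - c_j) = 0$ for $j \neq i$ (because $t - c_j$ is regular and nonvanishing at $D_i$). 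Thus if $\prod_i (t - c_i)^{a_i} = g^p$ holds in $\ol{\k}(B)^\times$ for some $a_i \in \Z$ and $g \in \ol{\k}(B)^\times$, then applying $v_{D_i}$ yields $a_i = p\, v_{D_i}(g) \equiv 0 \pmod p$. The argument is essentially bookkeeping once Kummer theory is set up; the only technical point is ensuring that the fibers behave well geometrically, which is handled by generic smoothness in characteristic zero.
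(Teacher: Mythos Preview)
Your argument is correct, and it takes a genuinely different route from the paper. Your key observation---that the hypothesis $(\Z/p)^{\oplus 2}\subset E(\k(B))$ of Proposition~\ref{prop:E-torsors}, via the Weil pairing and geometric integrality of $B$, forces $\mu_p\subset\k$---is a point the paper does not exploit. The paper in fact remarks that ``the proof is simpler if we assume that $\k$ contains a primitive $p$-th root of unity, however we do not make this assumption,'' and then constructs $\Z/p$-covers geometrically: it fixes one Galois $p$-cover $C\to\P^1$ over $\k$ (borrowed from Fried--Jarden), pulls it back along a family of morphisms $\rho_t\colon B'\to\P^1$ obtained by composing a fixed pencil with varying automorphisms $t\in\Aut(\P^1)$, and distinguishes the resulting covers $\wt{B}_t\to B'$ over $\ol\k$ by their branch divisors. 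Your approach, by contrast, reduces everything to Kummer theory and a clean valuation computation; it is shorter and more transparent.

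The trade-off is one of generality. The paper proves Lemma~\ref{lem:Galois} exactly as stated, assuming only $\chr(\k)=0$, $B$ geometrically integral of positive dimension, and $p$ prime. Your proof imports the elliptic curve hypothesis from Proposition~\ref{prop:E-torsors} to obtain $\mu_p\subset\k$, so strictly speaking you have proved the lemma only under that extra assumption. Since the lemma is invoked solely inside the proof of Proposition~\ref{prop:E-torsors}, this is harmless for the paper's purposes, but you should flag it: either restate the lemma with $\mu_p\subset\k$ as a hypothesis, or note that your argument covers only the case needed.
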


The fact that $\Hom(G_{\k(B)}, \Z/p)$ is infinite, in other words that $\k(B)$ admits infinitely many cyclic Galois extensions of degree $p$ is well-known \cite[\S16]{MR2445111}, due to the fact that $\k(B)$ is a so-called Hilbertian field. It is however not immediately clear from the constructions in \cite{MR2445111} whether the appearing extensions do not become isomorphic after passing to $\ol{\k}(B)$.

\begin{proof}
We will construct infinitely many Galois $p$-covers over $B$ that remain nonisomorphic after passing to $\ol{\k}$ as varieties over $B_{\ol{\k}}$.
The proof is simpler if we assume that $\k$ contains a primitive $p$-th root of unity, however we do not make this assumption.
In any case, by~\cite[Lemma 16.3.1]{MR2445111},
$\P^1$ admits a Galois cover $\beta\colon C \to \P^1$ of degree $p$
from a geometrically integral smooth curve over $\k$. Let $Z \subset \P^1$ be the branch divisor of $\beta$.

Our goal is to
construct a smooth projective variety $B'$ birational to $B$, a collection of surjective morphisms 
$$
\rho_t \colon B' \to \P^1,
$$
parameterized by elements 
$t$ of an infinite set $U$ 
and a Cartesian diagram
\[
\xymatrix{
\wt{B}_t \ar[d]_{\alpha_t} \ar[r] & C \ar[d]^\beta \\
B' \ar[r]^{\rho_t} & \P^1 \\
}
\]
satisfying the following properties:
\begin{enumerate}
    \item[(a)] $\wt{B}_t$ smooth projective
    and geometrically integral;
    \item[(b)] the branch divisors $D_t = \rho_t^{-1}(Z) \subset B'$ of $\alpha_t$ are pairwise distinct.
\end{enumerate}
Once these conditions are satisfied, we can
take the infinite family of degree $p$ Galois extensions $\{\k(\wt{B}_t)/\k(B)\}_{t \in U}$. By condition (a), we get field extensions $\ol{\k}(\wt{B}_t)/\ol{\k}(B)$.
Let us show that these field extensions are pairwise non isomorphic. If $\ol{\k}(\wt{B}_t)$
and $\ol{\k}(\wt{B}_{t'})$ were isomorphic as field extensions of $\ol{\k}({B'})$, then since both $\wt{B}_{t,\ol{\k}}$ and $\wt{B}_{t',\ol{\k}}$ are normal and finite over $B'_{\ol{\k}}$, they are isomorphic as they coincide with normalization of $B'$ in the same finite extension of $\k(B')$. However $\wt{B}_{t,\ol{\k}}$ and $\wt{B}_{t',\ol{\k}}$ can not be isomorphic over $B'_{\ol{\k}}$ for $t \ne t'$ since they have different branch divisors by condition (b).
Thus by Galois theory we deduce that \eqref{map-GalRes} has infinite image.

Now we construct the collection of morphisms $\rho_t$ satisfying  properties (a) and (b).
We first take $\rho \colon B' \to \P^1$,
the blow up of the base locus of a general very ample pencil $B \dto \P^1$. By construction $B'$ is smooth and projective. If $\dim(B) = 1$ we require in addition that $\rho$ has degree coprime to $p$.
We set $\rho_t = t \circ \rho$, for general $t \in \Aut(\P^1)$.
For condition (b) to be satisfied we can restrict to any dense open subset $U \subset \Aut(\P^1)$ such that for $t, t' \in U$ we have $t't^{-1}(Z) \ne Z$. 

Finally let us explain how we make sure that condition (a) is satisfied. To guarantee that $\wt{B}_t$ is smooth it suffices to require that $t(Z) \subset \P^1$ is disjoint from the closed subset of $\P^1$ parameterizing singular fibers of $\rho$ which is again an open dense condition on $t$. For the fact that $\wt{B}_t$ is geometrically integral we can argue as follows.
If $\dim(B) = 1$, this holds because we required degrees of $\beta$ and $\rho_t$ to be coprime.
On the other hand, if $\dim(B) > 1$, then 
the generic fiber of $\rho_t$ is geometrically integral, 
so the fiber product $\wt{B}_t$ is geometrically integral by~\cite[Exercise 4.3.6]{Liu-AG} using that $\beta$ is flat.
\end{proof}

\section*{Acknowledgments}

We thank 
Jeremy Blanc,
Michel Brion,
Serge Cantat,
Yuri Prokhorov,
Pavel Sechin,
Markus Szymik
for discussions and their interest in our work.
We thank an anonymous referee for pointing out a mistake in Proposition \ref{prop:constr-J} in an earlier version of the paper.
HYL is supported by
the Yushan Fellow Program by the Ministry of Education (NTU-114V1006-5), 
the National Council for Science and Technology in Taiwan (114-2918-I-002 -020-, 114-2628-M-002 -012-), and the Asian Young Scientist Fellowship.
Part of this work was done during his visit to Paris; 
he thanks Claire Voisin and IMJ-PRG for the hospitality, and the
ERC grant HyperK (Grant agreement No. 854361)
for its financial support.
E.S. is supported by the UKRI Horizon Europe guarantee award `Motivic invariants and birational geometry of simple normal crossing degenerations' EP/Z000955/1.

\bibliographystyle{plain}

\begin{thebibliography}{10}

\bibitem{BirkarModAlgVar}
Caucher Birkar.
\newblock Moduli of algebraic varieties.
\newblock Preprint, {arXiv}:2211.11237 [math.{AG}] (2022), 2022.

\bibitem{BCHM}
Caucher Birkar, Paolo Cascini, Christopher~D. Hacon, and James McKernan.
\newblock Existence of minimal models for varieties of log general type.
\newblock {\em J. Am. Math. Soc.}, 23(2):405--468, 2010.

\bibitem{BirkarQu1}
Caucher Birkar and Santai Qu.
\newblock Stein degree on log calabi-yau fibrations, 2025.

\bibitem{BirkarQu2}
Caucher Birkar and Santai Qu.
\newblock Stein degree on non-fano type fibrations, 2025.

\bibitem{BCDP}
J\'{e}r\'{e}my Blanc, Ivan Cheltsov, Alexander Duncan, and Yuri Prokhorov.
\newblock Birational self-maps of threefolds of (un)-bounded genus or gonality.
\newblock {\em Amer. J. Math.}, 144(2):575--597, 2022.

\bibitem{BLZ}
J\'{e}r\'{e}my Blanc, St\'{e}phane Lamy, and Susanna Zimmermann.
\newblock Quotients of higher-dimensional {C}remona groups.
\newblock {\em Acta Math.}, 226(2):211--318, 2021.

\bibitem{BSY}
J\'{e}r\'{e}my Blanc, Julia Schneider, and Egor Yasinsky.
\newblock Birational maps of {S}everi--{B}rauer surfaces, with applications to
  {C}remona groups of higher rank.
\newblock arXiv:2211.17123, 2022.

\bibitem{BO-reconstruction}
Alexei Bondal and Dmitri Orlov.
\newblock Reconstruction of a variety from the derived category and groups of
  autoequivalences.
\newblock {\em Compositio Math.}, 125(3):327--344, 2001.

\bibitem{Bridgeland-elliptic}
Tom Bridgeland.
\newblock Fourier-{M}ukai transforms for elliptic surfaces.
\newblock {\em J. Reine Angew. Math.}, 498:115--133, 1998.

\bibitem{CLKT}
Antoine Chambert-Loir, Maxim Kontsevich, and Yuri Tschinkel.
\newblock Burnside rings and volume forms with logarithmic poles.
\newblock Preprint, {arXiv}:2301.02899 [math.{AG}] (2023), 2023.

\bibitem{CheltsovCremona}
Ivan~A. Chel'tsov.
\newblock Regularization of birational automorphisms.
\newblock {\em Mat. Zametki}, 76(2):286--299, 2004.

\bibitem{ClarkLacy}
Pete~L. Clark and Allan Lacy.
\newblock There are genus one curves of every index over every infinite,
  finitely generated field.
\newblock {\em J. Reine Angew. Math.}, 749:65--86, 2019.

\bibitem{Bonrad-K/k}
Brian Conrad.
\newblock Chow's {$K/k$}-image and {$K/k$}-trace, and the {Lang}-{N{\'e}ron}
  theorem.
\newblock {\em Enseign. Math. (2)}, 52(1-2):37--108, 2006.

\bibitem{CoxParry}
David~A. Cox and Walter~R. Parry.
\newblock Torsion in elliptic curves over {$k(t)$}.
\newblock {\em Compositio Math.}, 41(3):337--354, 1980.

\bibitem{Debarre}
Olivier Debarre.
\newblock {\em Higher-dimensional algebraic geometry}.
\newblock Springer {U}niversitext. Springer, 2001.

\bibitem{DSW-Weak-Approx}
Julian Demeio, Sam Streeter, and Rosa Winter.
\newblock Weak weak approximation and the {H}ilbert property for degree 2 del
  {P}ezzo surfaces.
\newblock {\em Proc. Lond. Math. Soc. (3)}, 128(5):Paper No. e12601, 28, 2024.

\bibitem{ESS-atoms}
Alexey Elagin, Julia Schneider, and Evgeny Shinder.
\newblock Atomic decompositions for derived categories of $G$-surfaces.
\newblock arXiv:2512.05064, 2025.

\bibitem{MR2445111}
Michael~D. Fried and Moshe Jarden.
\newblock {\em Field arithmetic}, volume~11 of {\em Ergebnisse der Mathematik
  und ihrer Grenzgebiete. 3. Folge. A Series of Modern Surveys in Mathematics
  [Results in Mathematics and Related Areas. 3rd Series. A Series of Modern
  Surveys in Mathematics]}.
\newblock Springer-Verlag, Berlin, third edition, 2008.
\newblock Revised by Jarden.

\bibitem{GLU}
Anthony Genevois, Anne Lonjou, and Christian Urech.
\newblock On a theorem by {L}in and {S}hinder through the lens of median
  geometry.
\newblock {\em Selecta Math. (N.S.)}, 31(1):Paper No. 17, 10, 2025.

\bibitem{GHS}
Tom Graber, Joe Harris, and Jason Starr.
\newblock Families of rationally connected varieties.
\newblock {\em J. Amer. Math. Soc.}, 16(1):57--67, 2003.

\bibitem{DHL-MMP}
Daniel Halpern-Leistner.
\newblock The noncommutative minimal model program.
\newblock 2301.13168, 2023.

\bibitem{Hart}
Robin Hartshorne.
\newblock {\em Algebraic geometry}, volume~52 of {\em {G}raduate {T}exts in
  {M}ath.}
\newblock Springer, 1977.

\bibitem{HassettLai}
Brendan Hassett and Kuan-Wen Lai.
\newblock Cremona transformations and derived equivalences of {K}3 surfaces.
\newblock {\em Compos. Math.}, 154(7):1508--1533, 2018.

\bibitem{KKPY-atoms}
Ludmil Katzarkov, Maxim Kontsevich, Tony Pantev, and Tony~Yue Yu.
\newblock Birational invariants from hodge structures and quantum
  multiplication.
\newblock arXiv:2508.05105, 2025.

\bibitem{Kawamata-DK}
Yujiro Kawamata.
\newblock {D}-equivalence and {K}-equivalence.
\newblock {\em J. Differ. Geom.}, 61(1):147--171, 2002.

\bibitem{KawamataBook}
Yujiro Kawamata.
\newblock {\em Algebraic varieties: minimal models and finite generation.
  {Translated} by {Chen} {Jiang}}, volume 214 of {\em Camb. Stud. Adv. Math.}
\newblock Cambridge: Cambridge University Press, 2024.

\bibitem{Kollarrat}
J\'{a}nos Koll\'{a}r.
\newblock {\em Rational curves on algebraic varieties}, volume~32 of {\em
  Ergebnisse der Mathematik und ihrer Grenzgebiete. 3. Folge. A Series of
  Modern Surveys in Mathematics [Results in Mathematics and Related Areas. 3rd
  Series. A Series of Modern Surveys in Mathematics]}.
\newblock Springer-Verlag, Berlin, 1996.

\bibitem{KollarMori}
János Koll{\'a}r and Shigefumi Mori.
\newblock {\em Birational geometry of algebraic varieties}, volume 134 of {\em
  Cambridge Tracts in Mathematics}.
\newblock Cambridge University Press, Cambridge, 1998.
\newblock With the collaboration of C. H. Clemens and A. Corti, Translated from
  the 1998 Japanese original.

\bibitem{KontsevichTschinkel}
Maxim Kontsevich and Yuri Tschinkel.
\newblock Specialization of birational types.
\newblock {\em Invent. Math.}, 217(2):415--432, 2019.

\bibitem{KT-orbifolds-c}
Andrew Kresch and Yuri Tschinkel.
\newblock Burnside groups and orbifold invariants of birational maps.
\newblock In {\em Perspectives on four decades of algebraic geometry---in
  memory of {A}lberto {C}ollino. {V}ol. 2}, volume 352 of {\em Progr. Math.},
  pages 433--446. Birkh\"{a}user/Springer, Cham, [2025] \copyright 2025.

\bibitem{KuznetsovShinder}
Alexander Kuznetsov and Evgeny Shinder.
\newblock Grothendieck ring of varieties, {D}- and {L}-equivalence, and
  families of quadrics.
\newblock {\em Selecta Math. (N.S.)}, 24(4):3475--3500, 2018.

\bibitem{Lang-Tate}
Serge Lang and John Tate.
\newblock Principal homogeneous spaces over abelian varieties.
\newblock {\em Amer. J. Math.}, 80:659--684, 1958.

\bibitem{LazarsfeldPos}
Robert Lazarsfeld.
\newblock {\em Positivity in algebraic geometry {I}: {Classical} setting:
  {Line} bundles and linear series. {II}: {Positivity} for vector bundles, and
  multiplier ideals}.
\newblock Berlin: Springer, 2004.

\bibitem{BirMot}
Hsueh-Yung Lin and Evgeny Shinder.
\newblock Motivic invariants of birational maps.
\newblock {\em Ann. Math. (2)}, 199(1):445--478, 2024.

\bibitem{LSZ20}
Hsueh-Yung Lin, Evgeny Shinder, and Susanna Zimmermann.
\newblock Factorization centers in dimension 2 and the {Grothendieck} ring of
  varieties.
\newblock {\em Algebr. Geom.}, 10(6):666--693, 2023.

\bibitem{Liu-AG}
Qing Liu.
\newblock {\em Algebraic geometry and arithmetic curves}, volume~6 of {\em
  Oxford Graduate Texts in Mathematics}.
\newblock Oxford University Press, Oxford, 2002.
\newblock Translated from the French by Reinie Ern\'{e}, Oxford Science
  Publications.

\bibitem{LoginovZhang}
Konstantin Loginov and Zhijia Zhang.
\newblock Birational invariants of volume preserving maps.
\newblock Preprint, {arXiv}:2410.05036 [math.{AG}] (2024), 2024.

\bibitem{lyuZhang}
Renjie Lyu and Dingxin Zhang.
\newblock Generic triviality of automorphism groups of complete intersections,
  2024.

\bibitem{Mori88}
Shigefumi Mori.
\newblock Flip theorem and the existence of minimal models for 3-folds.
\newblock {\em J. Am. Math. Soc.}, 1(1):117--253, 1988.

\bibitem{NakayamaWM}
Noboru Nakayama.
\newblock On {Weierstrass} models.
\newblock Algebraic geometry and commutative algebra, in {Honor} of {Masayoshi}
  {Nagata}, {Vol}. {II}, 405-431 (1988)., 1988.

\bibitem{PanSimis}
Ivan Pan and Aron Simis.
\newblock Cremona maps of de {J}onqui\`eres type.
\newblock {\em Canad. J. Math.}, 67(4):923--941, 2015.

\bibitem{ShinderZhang}
Evgeny Shinder and Ziyu Zhang.
\newblock L-equivalence for degree five elliptic curves, elliptic fibrations
  and {K}3 surfaces.
\newblock {\em Bull. Lond. Math. Soc.}, 52(2):395--409, 2020.

\bibitem{SilvermanAEC}
Joseph~H. Silverman.
\newblock {\em The arithmetic of elliptic curves}, volume 106 of {\em Graduate
  Texts in Mathematics}.
\newblock Springer-Verlag, New York, 1992.
\newblock Corrected reprint of the 1986 original.

\bibitem{SiuPg}
Yum-Tong Siu.
\newblock Invariance of plurigenera.
\newblock {\em Invent. Math.}, 134(3):661--673, 1998.

\bibitem{stacks-project}
The {Stacks Project Authors}.
\newblock \textit{Stacks Project}.
\newblock \url{http://stacks.math.columbia.edu}, 2018.

\bibitem{Szymik}
Markus Szymik.
\newblock Homological stability fails for the {C}remona groups.
\newblock Preprint, {arXiv}:2403.07546 [math.{AG}] (2024), 2024.

\bibitem{UenoClassAlgVar}
Kenji Ueno.
\newblock {\em Classification theory of algebraic varieties and compact complex
  spaces}.
\newblock Lecture Notes in Mathematics, Vol. 439. Springer-Verlag, Berlin-New
  York, 1975.
\newblock Notes written in collaboration with P. Cherenack.

\bibitem{VialVG}
Charles Vial.
\newblock Algebraic cycles and fibrations.
\newblock {\em Doc. Math.}, 18:1521--1553, 2013.

\end{thebibliography}

\end{document}